\documentclass{amsart}
\usepackage[letterpaper,top=2.5cm,bottom=2cm,left=3cm,right=3cm,marginparwidth=3cm]{geometry}
\usepackage{setspace,amsmath,amsthm,amssymb,mathrsfs,ulem,enumitem,wasysym}
\usepackage[colorlinks=true, urlcolor=blue, linkcolor=red, citecolor=blue]{hyperref}
\newcommand{\D}{\,\mathrm{d}} \newcommand{\R}{\mathbb{R}} 
 
\newtheorem{lem}{Lemma} \newtheorem{prop}{Proposition} \newtheorem{thm}{Theorem} \newtheorem{rmk}{Remark} \newtheorem{claim}{Claim}[prop]
\title{Classification of Anti-symmetric Solutions to Fractional Lane-Emden System in Borderline Case}
\author[C.~Li]{Congming Li}
\address{School of Mathematical Sciences\\
	Shanghai Jiao Tong University\\
	Shanghai, 200240, China}  \email{congming.li@sjtu.edu.cn}

\author[Z.~Wang]{Zixuan Wang\textsuperscript{*}}
\thanks{\textsuperscript{*}Corresponding author.}
\address{School of Mathematical Sciences\\
	Shanghai Jiao Tong University\\
	Shanghai, 200240, China}
\email{math\underline{\ }wzx@sjtu.edu.cn}
\begin{document}
\large
\setcounter{section}{-1}	
\begin{abstract}
	We investigate the existence and non-existence of anti-symmetric solutions to the fractional Lane--Emden system in the half-space $\R^n_+$:
	$$\begin{cases}
		(-\Delta)^s u = v^p & \text{in } \R^n_+,\\
		(-\Delta)^s v = u^q & \text{in } \R^n_+,
		\end{cases}$$
	where $0<s<\frac{1}{2},p,q>0,2s+p=1$, with $u,v$ nonnegative in $\R^n_+$ and anti-symmetric with respect to $\partial\R^n_+$. Li and Zhuo [2023, Sci. China Math.] previously proved non-existence in $L_{2s}$, as well as non-existence in the extended space $L_{2s+1}$ when $1-2s<p,q<\frac{n+2s}{n-2s}$ and existence in $L_{2s+1}$ when $p+2s<1,q+2s<1$. In this paper, we fill the remaining borderline case $p=1-2s$ in $L_{2s+1}$ space. We establish a non-existence result for $1-2s=p\leq q\leq\frac{n+2s}{n-2s}$ and an existence result for $0<q<p=1-2s$. For the non-existence result, we combine a maximum principle for anti-symmetric functions with the method of moving planes to derive a Liouville--type theorem when $1-2s<q\leq\frac{n+2s}{n-2s}$. We then iterate the lower-bound estimates to obtain a contradiction through blow-up. To prove the existence of nontrivial solutions when $0<q<1-2s$, we construct a nonhomogeneous solution by a monotone iteration scheme in $L_{2s+1}$ as well as analyze a homogeneous profile. Surprisingly, the homogeneous profile serves as a universal lower bound for all solutions.
\end{abstract}
\subjclass[2020]{35A01, 35B06, 35B09, 35B53, 35E05, 35R11}
\keywords{Lane-Emden system, fractional Laplacian, maximum principle, Liouville--type theorem}
\maketitle
\section{Introduction}
	
	For a function $u\in C^{1,1}_{\mathrm{loc}}(\R^n)$, the fractional Laplacian $(-\Delta)^s,0<s<1$ is given by the singular integral
	$$(-\Delta)^s u(x)=c_{n,s}\,\text{P.V.}\int_{\R^n}\frac{u(x)-u(y)}{|x-y|^{n+2s}}\D y,$$
	where P.V. denotes the Cauchy principal value and $c_{n,s}$ is a normalization constant so that $(-\Delta)^s$ coincides with the Fourier multiplier $\mathcal{F}\big((-\Delta)^su\big)(\xi)=|\xi|^{2s}\mathcal{F}u(\xi)$ if $u$ is a Schwartz function. The fractional Laplacian arises naturally in many fields. In probability theory, \(-(-\Delta)^s\) is the infinitesimal generator of rotationally symmetric stable Lévy processes \cite{Bertoin1996}; related applications to Lévy flights and anomalous diffusion can be found in \cite{JespersenMetzlerFogedby1999,MetzlerKlafter2000}. In geophysical fluid dynamics, fractional dissipation appears prominently in the surface quasi-geostrophic equation. Early work of Constantin, Córdoba and Wu \cite{ConstantinCordobaWu2001} initiated the study of dissipative SQG equations, while the critical case was later resolved in \cite{KiselevNazarovVolberg2007,C_V2010}. In conformal geometry, Graham and Zworski \cite{GrahamZworski2003} introduced conformally covariant fractional operators through scattering theory, leading to further developments such as the fractional Yamabe problem; see \cite{ChangGonzalez2011,GonzalezQing2013}. The fractional Laplacian is also closely related to sharp functional inequalities. Lieb's celebrated work \cite{Lieb1983} determined the sharp constants and extremal functions in the  $L^2$ Hardy--Littlewood--Sobolev inequality	which is dual to the sharp fractional Sobolev inequality
	$$\|u\|_{L^{\frac{2n}{n-2s}}(\mathbb R^n)}^2\leq C_{n,s}\int_{\mathbb R^n}u(-\Delta)^s u\D x.$$
	The critical fractional equation $(-\Delta)^s u=u^{\frac{n+2s}{n-2s}}$ arises naturally as the Euler--Lagrange equation of the sharp fractional Sobolev inequality.  
	
	Classification of solutions to the (fractional) Lane--Emden equation has attracted considerable attention. We first recall the classical scalar equation
	\[
	-\Delta u=u^p \qquad \text{in } \mathbb R^n,~n\geq 3.
	\]
	Gidas, Ni, and Nirenberg \cite{gidas1979} established fundamental symmetry results for positive solutions. Gidas and Spruck \cite{Gidas_Spruck} proved that there is no nontrivial nonnegative entire solution when
	\[	1<p<\frac{n+2}{n-2}.	\]
	At the critical exponent
	$p=(n+2)/(n-2)$, positive entire solutions were completely classified by Caffarelli, Gidas, and Spruck \cite{Caffarelli1989}; see also the moving-plane proof of Chen and Li \cite{chen1991}. 
	
	The analogous scalar problem for the fractional Laplacian is
	\[	(-\Delta)^s u=u^p \qquad \text{in } \mathbb R^n,	\qquad 0<s<1.	\]
	The corresponding critical exponent is
	\[	p_s=\frac{n+2s}{n-2s}, \qquad n>2s.	\]
	Chen, Li, and Ou \cite{ChenLiOu} classified the positive solutions of the associated critical integral equation.
	Later, Chen, Li, and Li \cite{Chen2017} proved the non-existence of nontrivial nonnegative solutions in the subcritical range and radial symmetry in the critical case. On the other hand, in the supercritical range, Ao, Chan, Gonz\'alez, and Wei \cite{AoChanGonzalezWei2020} constructed smooth positive radially symmetric entire solutions.  
	
	As a natural generalization of the scalar theory, the Lane--Emden system
	\begin{equation}\label{frac_Lane_Emden}
		\begin{cases}
			(-\Delta)^s u=v^p,\\
			(-\Delta)^s v=u^q
		\end{cases}
		\text{in } \mathbb R^n,
	\end{equation}
	arises. In this paper, inspired by \cite{Zhuo2022,ZhuoLi2023}, we study anti-symmetric solutions to the fractional Lane--Emden system
	\begin{equation}\label{problem}
		\begin{cases}
			(-\Delta)^su=v^p,~\text{in}~\R^n_+,p>0;\\
			(-\Delta)^sv=u^q,~\text{in}~\R^n_+,q>0;\\
			u,v\in L_{2s+1}(\R^n)\cap C^{0,1}_{\mathrm{loc}}(\R^n_+)\cap C(\R^n),\\
			u(x',x_n)=-u(x',-x_n)\geq 0,\,v(x',x_n)=-v(x',-x_n)\geq 0.~\forall(x',x_n)\in\R^n_+.
		\end{cases}
	\end{equation}
	Here
	$$\R^n_+:=\big\{x=(x',x_n)\in\R^n:x_n>0 \big\},$$
	$$L_{2s+1}(\R^n):=\left\{u:\R^n\to\R\;\Big|\; \int_{\R^n} \frac{|u(x)|}{1+|x|^{n+2s+1}}\D x<\infty\right\},$$
	and the fractional Laplacian operator on an anti-symmetric function $u\in L_{2s+1}(\R^n)\cap C^{0,1}_{\mathrm{loc}}(\R^n_+)$ is defined by
	\begin{equation}\label{def_in_L2s+1}
		\begin{aligned}
			(-\Delta)^su(x):=&c_{n,s}\int_{\R^n_+}\Big(\frac{1}{|x-y|^{n+2s}}-\frac{1}{|x-y^*|^{n+2s}}\Big)\big(u(x)-u(y)\big)\D y\\
			&+2c_{n,s}u(x)\int_{\R^n_+}\frac{\D y}{|x-y^*|^{n+2s}},~x\in\R^n_+,
		\end{aligned}
	\end{equation}	
	where $y^*=(y',-y_n)$. This is due to an extra cancellation of the fractional kernel
	$$\frac{1}{|x-y|^{n+2s}}-\frac{1}{|x-y^*|^{n+2s}}=O(|y|^{-n-2s-1}),~x~\text{fixed},~y\to\infty.$$
	Furthermore, if $u\in C^\infty_\mathrm{c}(\R^n)$ is anti-symmetric, then
	$$(-\Delta)^su(x)=O(|x|^{-n-2s-1}),x\to\infty.$$
	Li and Zhuo \cite{ZhuoLi2023} studied system \eqref{problem} and obtained the following result:
	\renewcommand{\thethm}{A}  
	\begin{thm}(\cite[Theorem 1.2 and 1.3]{ZhuoLi2023})
		Given $0<p,q\leq\dfrac{n+2s}{n-2s}$, assume $u,v$ satisfy \eqref{problem}. 
		\begin{itemize}
			\item If $p+2s>1$ and $q+2s>1$, then $u=v=0$.
			\item If $p+2s<1$ and $q+2s<1$, then there exist nontrivial solutions  to \eqref{problem}.
		\end{itemize}
	\end{thm}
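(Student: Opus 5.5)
The statement is Theorem A of Li and Zhuo, which has two parts: non-existence when $p+2s>1$ and $q+2s>1$, and existence when $p+2s<1$ and $q+2s<1$. I would prove them separately.

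\textbf{Non-existence.} The plan has three steps.

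First, I would establish a maximum principle for anti-symmetric functions in $\R^n_+$, using the kernel representation \eqref{def_in_L2s+1}. Suppose $(-\Delta)^s w\ge 0$ in a region $\Omega\subset\R^n_+$, $w\ge0$ in $\R^n_+\setminus\Omega$, and $w$ is anti-symmetric. Then $w\ge0$ in $\Omega$. The reason is that at a negative minimum point both terms of \eqref{def_in_L2s+1} are strictly negative, which contradicts $(-\Delta)^s w\ge 0$.

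Second, I would apply the method of moving planes in the $x'$ directions to the differences $u_\lambda-u$ and $v_\lambda-v$. In the subcritical range this gives the standard conclusion that a nontrivial solution must be independent of $x'$, so $u=u(x_n)$ and $v=v(x_n)$. In the critical case I would instead use a Kelvin transform centered at boundary points.

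Third, I would derive lower bounds and iterate them to a blow-up contradiction. If $v\not\equiv0$, then comparing with the anti-symmetric Green function of the half-space, or testing against $x_n^s$-type barriers, gives $u(x)\ge c\,x_n$ for large $x_n$. This linear growth is the best one can get in $L_{2s+1}$. Substituting into the second equation gives $(-\Delta)^s v\ge c\,x_n^q$, which yields $v\gtrsim x_n^{q+2s}$. Substituting back gives $u\gtrsim x_n^{p(q+2s)+2s}$.

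Write $\gamma_k$ for the growth exponent at step $k$. The exponents evolve by the affine maps $\gamma\mapsto p\gamma+2s$ and $\gamma\mapsto q\gamma+2s$, which move $\gamma$ upward whenever $\gamma\ge1$ and $p+2s>1$. After finitely many iterations the exponent exceeds $1+2s$, and that contradicts $u\in L_{2s+1}$. The key estimate here is a one-dimensional (in $x_n$) inequality: if $(-\Delta)^s w\ge x_n^\beta$ in $\R^n_+$ and $w$ is anti-symmetric, then $w\ge c\,x_n^{\beta+2s}$. This holds as long as $\beta+2s<1+2s$; otherwise the integral diverges, which directly contradicts integrability.

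\textbf{Existence.} The exponents $p,q<1-2s$ are sublinear relative to the operator. I would look for one-dimensional homogeneous profiles $u=a\,x_n^{\alpha}$ and $v=b\,x_n^{\beta}$ on $\R^n_+$, extended oddly. For anti-symmetric one-dimensional power functions with $0<\alpha<1+2s$ (and $\alpha\neq$ resonant values), we have $(-\Delta)^s x_n^\alpha=C(\alpha)x_n^{\alpha-2s}$. Matching exponents requires $\alpha-2s=p\beta$ and $\beta-2s=q\alpha$. This gives
\[
\alpha=\frac{2s(1+p)}{1-pq},\qquad \beta=\frac{2s(1+q)}{1-pq}.
\]
Since $p+2s<1$ and $q+2s<1$, one checks $\alpha,\beta<1$, which puts both profiles in $L_{2s+1}$. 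The step I expect to be the main obstacle is showing $C(\alpha)>0$ in this range, because it requires the explicit one-dimensional computation, for example via hypergeometric or Beta-function formulas for odd powers. Once the sign is known, $a$ and $b$ are solved from $C(\alpha)a=b^p$ and $C(\beta)b=a^q$, which is possible because $pq<1$.

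If positivity fails, the fallback is a monotone iteration. I would use the sub-solution $\epsilon(x_n)_+^{s}$-type barrier and the super-solution $M x_n^{\gamma}$ with $\gamma<1$ near $1$, and set $u_{k+1}=G[v_k^p]$ and $v_{k+1}=G[u_k^q]$ using the half-space Green operator on anti-symmetric functions. The limit of this scheme is then a solution of \eqref{problem}.
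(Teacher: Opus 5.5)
Your existence half is correct and is the paper's homogeneous-profile construction of Section~\ref{sec_constrict_q<p}. There, $\alpha-2s=p\beta>0$ and $\alpha<1\iff p(q+2s)+2s<1$. The positivity of $C(\alpha)$ on $(2s,1)$ that you flag is Lemma~\ref{lplc_s_homogns_fn}, which follows from $c_{s,1}=0$ and the monotonicity of $F(s,\gamma)$ in $\gamma$.

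The non-existence half has a genuine gap at its first step. The claim that $v\not\equiv0$ forces $u\geq c\,x_n$ for large $x_n$ does not follow from the Green representation. The Green potential of a function that is merely positive on compact sets \emph{decays} at infinity: like $t^{2s-2}$ after the reduction to one dimension, and like $|x|^{2s-n-1}$ in a cone (cf.\ Lemma~\ref{initial_lower_bound}). Your step uses nothing about $p,q$, so it would apply verbatim when $p+2s<1$, $q+2s<1$. There your own solution $(a\,x_n^{\alpha},b\,x_n^{\beta})$ with $\alpha<1$ contradicts it.

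A non-decaying starting bound has to be extracted from $p+2s>1$, $q+2s>1$. In the scheme of Section~\ref{sec_non_D<0} this is the job of the moving planes. One first applies a Kelvin transform; this is needed in the subcritical case too, since solutions in $L_{2s+1}$ may grow and nothing at infinity lets the planes start. Then one uses the quantitative maximum principle for functions anti-symmetric with respect to both $\partial\R^n_+$ and $T_\lambda$ (Lemma~\ref{maximum_principle}); your sign-only principle does not apply to $u_\lambda-u$. Combined with $\phi(x)\geq c\,x_n|x|^{-(n+2-2s)}$ from Lemma~\ref{lower_bound}, this confines a negative minimum of $\zeta_\lambda,\omega_\lambda$ to a bounded set, and this is exactly where $2s+q-1>0$ and $2s+p-1>0$ are used. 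The result is $u=u(x_n)$, $v=v(x_n)$. A second, one-dimensional moving-plane argument comparing $t$ with $2\lambda-t$ (Proposition~\ref{increasing}, with $q+2s>1$ entering in Claim~\ref{d>l}) then gives monotonicity, hence $u,v\geq c$ for $x_n\geq1$.

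Some bound of this kind is indispensable when $pq>1$. The fixed point of the exponent map $(\gamma,\delta)\mapsto(2s+p\delta,2s+q\gamma)$ is then negative and repelling, so starting from decaying exponents the iteration can run off to $-\infty$. For $pq<1$ the map is contractive, and the cone bound of Lemma~\ref{initial_lower_bound} would already suffice as a start.

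The endpoint of your iteration is also misplaced. The bound $t\int_t^\infty(r+t)^{2s-2}r^{\beta}\D r$ already diverges when $\beta\geq1-2s$, not $\beta\geq1$. So for $\beta\in[1-2s,1)$, by the argument of Lemma~\ref{lower_bound}, no nonnegative anti-symmetric $w\in L_{2s+1}$ satisfies $(-\Delta)^sw\geq x_n^{\beta}$ at all. Moreover, $x_n^{\beta+2s}$ is useless as a comparison function in that range, since by Lemma~\ref{lplc_s_homogns_fn} its fractional Laplacian is a nonpositive multiple of $x_n^{\beta}$.

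For $pq<1$ the exponents converge to the scaling exponents $\frac{2s(1+p)}{1-pq}$ and $\frac{2s(1+q)}{1-pq}$, which may lie below $1+2s$. For $s=\frac14$ and $p=q=0.6$, both equal $1.25<1.5$, so the exponents never exceed $1+2s$.

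The contradiction actually comes from Lemma~\ref{lower_bound} itself: $v(1)\geq c\int_1^\infty(r+1)^{2s-2}u(r)^q\D r$ becomes infinite as soon as $u\gtrsim r^{\alpha_k}$ with $q\alpha_k\geq1-2s$. When $pq<1$ this is reached because $\frac{2s(1+q)}{1-pq}>1\iff q(p+2s)>1-2s$. When $pq\geq1$ the exponents tend to $+\infty$ from the constant initial bound. Indeed, with a valid bound $u\geq c\,x_n$, this divergence would be immediate from $q>1-2s$ with no iteration at all. The growth lower bound is where the real work lies.
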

	\renewcommand{\thethm}{\arabic{thm}} \setcounter{thm}{0}
	
	When $p=1-2s$, the borderline exponent leads to a subtle problem in comparing the function itself and the reflected function when applying the method of moving planes. In this case, the powers of $|x|$ on both sides of a key estimate coincide and no direct bound on the minimizing point can be obtained. To overcome this difficulty, we use a weighted comparison between the two components to derive a localization estimate of the extremal point. As a result, we obtain a sharp dichotomy based on the sizes of $q$:
	\begin{thm}
		\begin{itemize}
			\item If $p=1-2s\leq q\leq\dfrac{n+2s}{n-2s}$ and $u,v$ satisfies \eqref{problem}, then $u=v=0$ is the only solution of \eqref{problem}.\label{non_exist}
			\item If $0<q<p=1-2s$, then there exist nontrivial solutions of \eqref{problem}.\label{exist}
		\end{itemize}
	\end{thm}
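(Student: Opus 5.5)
The plan has two parts: a Liouville argument for the non-existence case, and a monotone-iteration construction for the existence case.

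For the non-existence part I split into two cases: $1-2s<q\le \frac{n+2s}{n-2s}$ and $q=p=1-2s$. In both, suppose $(u,v)\not\equiv 0$.

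The first step is positivity and a lower bound. By the maximum principle for anti-symmetric functions (the strong maximum principle applied to \eqref{def_in_L2s+1}), both $u$ and $v$ are positive in $\R^n_+$. Comparing with the anti-symmetric fundamental-type profile $x_n(1+|x|)^{-n-2s-1}$, I obtain a lower bound of the form $u,v\ge c\,x_n/(1+|x|^{n+2s+1})$ near infinity.

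The second step is moving planes in a direction $x_1$ parallel to $\partial\R^n_+$. Set $w_\lambda=u_\lambda-u$ and $z_\lambda=v_\lambda-v$, where the reflection is across $T_\lambda=\{x_1=\lambda\}$ and acts in the region $\Sigma_\lambda\cap\R^n_+$. The linearized system is
$$(-\Delta)^s w_\lambda=p\xi^{p-1}z_\lambda,\qquad (-\Delta)^s z_\lambda=q\eta^{q-1}w_\lambda,$$
with $\xi$ between $v$ and $v_\lambda$ and $\eta$ between $u$ and $u_\lambda$. Because $p<1$, the coefficient $\xi^{p-1}$ becomes large where $v$ is small. In the narrow-region and decay-at-infinity arguments, at a negative minimum $\bar x$ of $w_\lambda$ one has $(-\Delta)^s w_\lambda(\bar x)\le C w_\lambda(\bar x)\,\bar x_n^{1-2s}\cdot(\ldots)/|\bar x|^{\cdots}$. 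This has to be played against $v^{p-1}\ge v^{-2s}$ (since $p-1=-2s$). Here the powers of $|\bar x|$ on the two sides coincide, which is the difficulty the introduction mentions.

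To handle this, I compare the weighted quantities $w_\lambda/\varphi$ and $z_\lambda/\psi$, where the weights $\varphi,\psi$ are chosen via the lower bounds from the first step so that the factors $x_n^{-2s}$ cancel. I then show that the minimizing points of the weighted pair satisfy $|\bar x|\le R_0$, or else a strictly subcritical gain appears because $q>1-2s$. This localization is the step I expect to be the main obstacle; the first attempt would be the ratio $w_\lambda/x_n$, exploiting $v\gtrsim x_n$ locally.

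Once the plane can be moved to $\lambda=-\infty$ and back from $+\infty$, $u$ and $v$ are symmetric in every $x'$ direction. They are therefore functions of $x_n$ alone, and this contradicts $u,v\in L_{2s+1}$ unless both are zero (alternatively, a one-dimensional Liouville argument finishes).

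For the case $q=p=1-2s$, and as an alternative route, I iterate the lower bounds. Each round, plugging $u\ge c_k x_n^{a_k}(\ldots)$ into the integral representation via the anti-symmetric Green function of $\R^n_+$ gives $v\ge c_{k+1}x_n^{b_k}$, and similarly back. The exponents increase, since $p=1-2s$ exactly balances the $2s$ gain, so the exponents stay fixed while the constants grow geometrically. Letting $k\to\infty$ gives $u=\infty$ at a point, which is the blow-up contradiction.

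For the existence part ($0<q<1-2s$), I first look for a homogeneous profile $u_0=A x_n^{\alpha}$, $v_0=Bx_n^{\beta}$ (odd extensions) with $0<\alpha,\beta<1+2s$. Using $(-\Delta)^s (x_n)_{\pm}^{\gamma}$-type formulas, $(-\Delta)^s x_n^\gamma=C(\gamma)x_n^{\gamma-2s}$ for odd powers, the system reduces to the exponent equations $\alpha-2s=p\beta$ and $\beta-2s=q\alpha$. With $p=1-2s$ this gives $\alpha=\frac{2s+p\cdot 2s}{1-pq}$ and $\beta=2s+q\alpha$. I then check that $\alpha,\beta<1+2s$, which is exactly where $q<1-2s$ enters, and that the signs of $C(\gamma)$ are positive, so that $A,B>0$ can be solved.

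Next, I build a genuine solution in $L_{2s+1}$ by monotone iteration. Starting from the subsolution $(\underline u,\underline v)=\epsilon(u_0,v_0)$ truncated suitably, I iterate
$$u_{k+1}=\mathcal G[v_k^p],\qquad v_{k+1}=\mathcal G[u_k^q]+h,$$
where $\mathcal G$ is the anti-symmetric Green operator and $h=x_n$ is a nonnegative anti-symmetric harmonic term in $L_{2s+1}$. For the upper bound I use the supersolution $M(u_0,v_0)+$ a multiple of $x_n$, where the sublinearity $pq<1$ allows a large $M$. Monotone convergence then yields a solution, and regularity ($C^{0,1}_{\mathrm{loc}}$) follows from standard estimates.
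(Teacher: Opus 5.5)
Your outline has the paper's overall shape: moving planes plus lower-bound iteration for non-existence, and a homogeneous profile plus monotone iteration for existence. However, several steps fail as stated.

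\textbf{The case $1-2s<q$.} Your closing step is false. Dependence on $x_n$ alone is compatible with $L_{2s+1}(\R^n)$: already $\int_{\R^n}|x_n|\,(1+|x|^{n+2s+1})^{-1}\D x<\infty$, and the paper's own solutions for $q<1-2s$ have exactly this form. So the contradiction rests entirely on your parenthetical ``one-dimensional Liouville argument''. For $n=1$ the moving plane in $x'$ is even vacuous.

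That parenthetical is where the paper does real work. A second moving-plane argument on $(0,\lambda)$, comparing $t$ with $2\lambda-t$, shows that $u,v$ are increasing, hence $u,v\ge c$ on $[1,\infty)$. Only then is the iteration $u(t)\ge ct\int_t^\infty(r+t)^{2s-2}v(r)^p\D r$ (and its mirror for $v$) started from exponents $(0,0)$. This drives the exponents to the scaling exponents or to $+\infty$ and produces a divergent integral.

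Your moving planes also have two problems of their own.
\begin{itemize}
\item You run them on $u,v$ themselves. These may grow at infinity, so $u_\lambda-u$ need not have nonnegative $\liminf$ at infinity, a negative infimum need not be attained, and the procedure cannot even be started. The paper first passes to the Kelvin transforms $\phi,\psi=o(|x|^{2s-n})$.
\item The localization you leave open is achieved not with spatial weights but with a small \emph{constant} weight: one minimizes $\min\{\epsilon\zeta_\lambda,\omega_\lambda\}$. If $\epsilon\zeta_\lambda$ realizes the minimum at $\tilde x$, then $|\omega_\lambda(\tilde x)|\le\epsilon|\zeta_\lambda(\tilde x)|$. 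The anti-symmetric maximum principle, $b^p-a^p\simeq(a+b)^{p-1}(b-a)$ and $\psi\gtrsim x_n|x|^{-(n+2-2s)}$ then give $c\,\tilde x_n^{-2s}\le c_0\epsilon\,\tilde x_n^{-2s}$. The coincidence of powers that blocks the naive argument is exactly what makes this absurd for small $\epsilon$. Hence the minimum is attained by $\omega_\lambda$, whose equation carries the gain from $q>1-2s$ and forces $|\tilde x|\le R$.
\end{itemize}

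\textbf{The case $q=p=1-2s$.} The exponents cannot ``stay fixed''. The only fixed exponent pair of the recursion is $(1,1)$, and you do not have it initially: Lemma~\ref{lower_bound} gives $u\ge c_Rx_n$ only on $B_R^+$. Moreover, if $u\gtrsim x_n$ held at infinity, a single application of the Green operator would already be infinite, since $\int^\infty r^{2s-2}r^{1-2s}\D r=\infty$.

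The paper instead starts from $u,v\ge c_0r^{2s-n-1}$ in the cone $\{\theta_n>1/2\}$, read off directly from the kernel. The exponents then increase towards $(1,1)$ without reaching it, so every integral is finite at each finite stage. The real point is that the constants stay bounded below. One can then pass to the limit, get $U(r)\ge c_*r$ and $V(r)\ge c_*r$, and apply the estimate once more to obtain $\int_r^\infty\rho^{-1}\D\rho=\infty$.

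\textbf{Existence.} The homogeneous pair alone already proves the claim, since it is a genuine solution in $L_{2s+1}$. But $q<1-2s$ enters through $\alpha<1$, not through $\alpha,\beta<1+2s$. With $p=1-2s$ one has $\alpha<1\iff q<1-2s$, and then $\beta<2s+q<1$. This is what makes $c_{s,\alpha},c_{s,\beta}>0$, because $c_{s,\gamma}$ changes sign at $\gamma=1$.

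Your monotone scheme, by contrast, fails. With $h=x_n$ in the $v$-equation, $\mathcal G[v^p]\ge\mathcal G[x_n^{1-2s}]=\infty$ by the same logarithmic divergence, so no finite supersolution exists. The forcing must go into the $u$-equation, where $\mathcal G[x_n^q]=c\,x_n^{q+2s}$ is finite because $q<1-2s$. This is the paper's scheme $F_1=t+\mathcal G[v^p]$, $F_2=\mathcal G[u^q]$, with supersolution $\bigl(K(t+t^\alpha),M(t^{q+2s}+t^\beta)\bigr)$.
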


	In fact, we have the following extension of the results when $q\leq 1-2s\leq p$, not necessarily when $p+2s=1$:
	\begin{prop}\label{D=0_no_sol}
		If $p\geq1-2s\geq q>0$ and $(q+2s)p+2s=1$, then $u=v=0$ is the only solution of \eqref{problem}.
	\end{prop}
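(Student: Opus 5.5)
Assume $(u,v)$ is a nontrivial solution of \eqref{problem} with $p\ge 1-2s\ge q>0$ and $(q+2s)p+2s=1$. The plan is to iterate lower bounds between the two equations until the growth exponent closes up, and then to show that the borderline balance forces blow-up at infinity.

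\textbf{Step 1 (start the iteration).} By the strong maximum principle for anti-symmetric functions, a nontrivial $u$ is positive in $\R^n_+$, and likewise $v$. Comparing $u$ with a small multiple of an anti-symmetric barrier supported near a fixed point gives the linear lower bound
$$u(x)\ge c\,x_n\,|x|^{-n-2s}\quad\text{for }|x|\text{ large.}$$
It comes from the cancellation $\frac{1}{|x-y|^{n+2s}}-\frac{1}{|x-y^*|^{n+2s}}\sim x_ny_n|x|^{-n-2s-2}$.

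\textbf{Step 2 (iterate via the Green representation).} Use the Green function of the half-space (equivalently the anti-symmetric Riesz kernel), $G(x,y)\approx$ $\min\{|x-y|^{2s-n},\,x_n^s y_n^s|x-y|^{-n}\}$. Plugging a lower bound $v\ge c\,x_n^{a}|x|^{b}$ into $u\ge\int G(x,y)v^p(y)\D y$ produces a new bound $u\gtrsim x_n^{s}|x|^{\beta}$, with the exponent raised by roughly $2s$ over the source exponent. The same holds with $q$ in the other equation. Tracking the growth along rays inside a cone $\{x_n>\delta|x|\}$, write the exponents as $\alpha_k$ for $u$ and $\gamma_k$ for $v$. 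One cycle gives
$$\gamma_{k+1}\approx q\alpha_k+2s,\qquad \alpha_{k+1}\approx p\gamma_{k+1}+2s,$$
so $\alpha_{k+1}=pq\,\alpha_k+(p+1)2s$. The fixed point is $\alpha^*=\frac{2s(p+1)}{1-pq}$.

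The relation $(q+2s)p+2s=1$ rewrites as $1-pq=2s(p+1)$, hence $\alpha^*=1$. This is exactly the $L_{2s+1}$ threshold: a function growing like $|x|^{1}$ (or faster) in a cone is not in $L_{2s+1}$, and also makes the integral for $(-\Delta)^s$ diverge. Since $p\ge 1-2s\ge q$, we have $pq<1$, so the $\alpha_k$ converge monotonically up to $1$. This alone does not contradict, which is why this is a borderline case.

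\textbf{Step 3 (logarithmic gain — the main obstacle).} Exactly at the threshold, one more application of the kernel to a source of size $|y|^{1}$-type (after powers) gives a logarithmically divergent integral. Concretely, once $u\gtrsim x_n|x|^{\alpha_k-1}$ with $\alpha_k$ close to $1$, the integral $\int_{|y|<R}G(x,y)v^p\D y$ picks up a factor $\log R$. The plan is:
\begin{itemize}
\item First run the iteration to reach bounds $u\ge c_k x_n|x|^{-\epsilon_k}$ with $\epsilon_k\to0$, keeping constants controlled.
\item Then feed a bound of the exact form $u\ge c\,x_n$ (obtained at the limit using uniformity of constants, or via a scaling/homogeneous subsolution argument) into the system. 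This yields $u\ge c\,x_n\log|x|$, and iterating gives $u\ge c_m x_n(\log|x|)^m$-type growth.
\end{itemize}
Alternatively, it suffices to show directly that the $L_{2s+1}$ integral
$$\int \frac{v^p(y)\,y_n}{1+|y|^{n+2s+1}}\D y,$$
which is needed for the representation formula to be finite, diverges at the threshold. That contradicts $u\in L_{2s+1}$, or finiteness of $(-\Delta)^s u$ at a point.

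The delicate part is keeping constants from degenerating along the iteration so that the limit exponent is actually attained. I would handle this by working with the explicit homogeneous anti-symmetric profile $x_n|x|^{\alpha-1}$, for which $(-\Delta)^s$ is computable up to a constant, and comparing via the anti-symmetric maximum principle to obtain a clean inequality at the limiting exponent.
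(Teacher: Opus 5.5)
Your skeleton is the same as the paper's: iterate the Green lower bounds along a cone, use $pq<1$ to drive the exponents to the fixed point $(\alpha^*,\beta^*)=(1,2s+q)$, and conclude from a logarithmic divergence. But the two steps that carry the proof are missing or wrong.

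\textbf{Constant control.} You flag this step as delicate but leave it open. The maximum-principle comparison with $x_n|x|^{\alpha-1}$ that you suggest would need control of $u$ at infinity, which is what you are trying to prove. No comparison is needed. For $x=r\theta$, $y=\rho\kappa$ with $\theta,\kappa\in\Gamma=\{\theta_n>\frac12\}$ and $r\le\rho$, the mean value theorem gives $|x-y|^{2s-n}-|x-y^*|^{2s-n}\ge C\,r\rho^{2s-n-1}$. Set $U(r)=\min_{\theta\in\Gamma}u(r\theta)$ and $V(r)=\min_{\theta\in\Gamma}v(r\theta)$. Then Lemma~\ref{lower_bound} yields
$$U(r)\ge C_*r\int_r^\infty\rho^{2s-2}V(\rho)^p\D\rho,\qquad V(r)\ge C_*r\int_r^\infty\rho^{2s-2}U(\rho)^q\D\rho .$$
Feeding in $U\ge A_kr^{\alpha_k}$ and $V\ge B_kr^{\beta_k}$ for $r\ge1$ gives $A_{k+1}\ge C_*B_k^p$ and $B_{k+1}\ge C_*A_k^q$. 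The extra factor $(1-2s-p\beta_k)^{-1}$ is bounded below because the $\beta_k$ are bounded. Since $pq<1$, this recursion is a contraction in logarithmic scale, so $A_k,B_k\ge c_*>0$ for all $k$. Letting $k\to\infty$ then gives $U(r)\ge c_*r$ and $V(r)\ge c_*r^{2s+q}$ for $r\ge1$.

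\textbf{Endgame.} The endgame you describe does not produce a contradiction.
\begin{itemize}
\item Growth like $|x|^\gamma$ with $\gamma<2s+1$ is compatible with $L_{2s+1}$ and with the definition \eqref{def_in_L2s+1} of $(-\Delta)^s$. For instance, $x_n$ itself is anti-symmetric and $s$-harmonic, and the solutions of Section~\ref{sec_constrict_q<p} grow linearly. So neither ``$u\notin L_{2s+1}$'' nor a bound $u\gtrsim x_n(\log|x|)^m$ contradicts anything.
\item What actually happens is immediate. Inserting $V\ge c_*\rho^{2s+q}$ into the $U$-inequality gives $\int_r^\infty\rho^{2s-2+(2s+q)p}\D\rho=\int_r^\infty\rho^{-1}\D\rho=\infty$. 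So $u=+\infty$ in the cone, not $cx_n\log|x|$.
\item Your alternative integral has the wrong weight. Finiteness of the Green representation requires $\int y_nv(y)^p|y|^{2s-n-2}\D y<\infty$, not a weight $|y|^{-n-2s-1}$. With your weight the integrand along the cone is $\rho^{-4s}$, which diverges only when $s\le\frac14$.
\item Relatedly, $\min\{|x-y|^{2s-n},x_n^sy_n^s|x-y|^{-n}\}$ is the Dirichlet half-space Green function, not the anti-symmetric kernel. The anti-symmetric kernel behaves like $\min\{|x-y|^{2s-n},x_ny_n|x-y|^{2s-n-2}\}$. The far-field decay is exactly what sets the divergence threshold, so this difference is not cosmetic.
\end{itemize}

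\textbf{Starting bound.} Your initial bound $x_n|x|^{-n-2s}$ is larger than what a compactly supported source can give, namely $x_n|x|^{2s-n-2}$ (since $s<\frac12$). So it does not follow from a barrier argument. This is harmless: use $u,v\ge c_0r^{2s-n-1}$ on the cone from Lemma~\ref{lower_bound}, and any polynomial start works because $pq<1$.
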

	\begin{prop}\label{D<0_exist_sol}
		If $p\geq1-2s\geq q>0$ and $(q+2s)p+2s<1$, then there exist nontrivial solutions of \eqref{problem}.
	\end{prop}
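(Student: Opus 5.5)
We aim to construct a nontrivial solution by monotone iteration between an explicit subsolution and supersolution built from homogeneous anti-symmetric profiles. The basic building block is the one-dimensional family $x_n^{\gamma}$, extended anti-symmetrically as $\mathrm{sgn}(x_n)|x_n|^\gamma$. For $0<\gamma<1+2s$ this lies in $L_{2s+1}$, and by scaling and the kernel cancellation in \eqref{def_in_L2s+1} we expect $(-\Delta)^s(x_n^\gamma)=C(\gamma)x_n^{\gamma-2s}$ in $\R^n_+$. We must verify that $C(\gamma)>0$ for $\gamma\in(2s,1+2s)$; this should follow since $x_n^\gamma$ is convex-type growth and $\gamma=2s$ corresponds to the $s$-harmonic profile $x_n^{2s}$ (in the anti-symmetric setting, $x_n$ itself has zero fractional Laplacian, and $C$ changes sign appropriately; this sign must be checked).

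\textbf{Step 1: exponents.} We look for $u=a x_n^{\alpha}$, $v=b x_n^{\beta}$ with $\alpha-2s=p\beta$ and $\beta-2s=q\alpha$. This gives $\alpha=\frac{2s(1+p)}{1-pq}$ and $\beta=\frac{2s(1+q)}{1-pq}$, which requires $pq<1$; this holds because $(q+2s)p+2s<1$ forces $pq<1$. The condition $\alpha<1+2s$ reads $2s+2sp<(1+2s)(1-pq)$, which is equivalent to $p(q+2s)+2s<1+pq\cdot 2s\cdot$(correction). We will reorganize this carefully; the intended equivalence is that the hypothesis $(q+2s)p+2s<1$ is exactly $\alpha<1+2s$, the integrability threshold in $L_{2s+1}$. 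Since $q\le 1-2s\le p$, we have $\beta\le\alpha$, so $\beta<1+2s$ as well. Moreover $\alpha,\beta>2s$, so $C(\alpha),C(\beta)>0$. Solving $aC(\alpha)=b^p$ and $bC(\beta)=a^q$ gives positive $a,b$ because $pq\ne1$. This already yields a homogeneous nontrivial solution, provided $x_n^\gamma\in C^{0,1}_{\rm loc}(\R^n_+)\cap C(\R^n)$, which holds.

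\textbf{Step 2: regularity and definition.} We must check that \eqref{def_in_L2s+1} is finite and agrees with the one-dimensional computation. Because $\gamma<1+2s$, the tail $\int |y|^{\gamma}|y|^{-n-2s-1}$ converges, so the reduction to $n=1$ by integrating out $y'$ is justified.

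\textbf{Main obstacle.} The main difficulty is computing the sign of $C(\gamma)$ for anti-symmetric $|x_n|^\gamma$ on the range $(2s,1+2s)$, including $\gamma>1$. We would first try an explicit Beta-function formula via Mellin transform of the one-dimensional kernel. Failing that, we would use a comparison argument: $C(1)=0$ because $x_n$ is affine, and $C(2s)$ should be related to the boundary-blowup profile. Monotonicity of $C$ in $\gamma$, or a sign change only at $\gamma=1$, would be problematic if $\alpha<1$ gives $C<0$. If the sign fails on part of the range, we would instead use a nonhomogeneous construction. In that approach, the subsolution is $\varepsilon\,\mathrm{sgn}(x_n)\min(|x_n|,1)$-type, the supersolution is the homogeneous profile plus a multiple of $x_n$, and we run a monotone iteration $u_{k+1}=(-\Delta)^{-s}_{\text{anti}}v_k^p$, with the Green function of the half-space in $L_{2s+1}$, as in the abstract's scheme.
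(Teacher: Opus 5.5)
Your Step 1 is the paper's homogeneous construction: reduce to $n=1$, take $A|t|^{\alpha-1}t$ and $B|t|^{\beta-1}t$ with the scaling exponents, and solve $Ac_{s,\alpha}=B^p$, $Bc_{s,\beta}=A^q$. However, the fact the whole proof rests on is misstated and left unproved, so there is a genuine gap.

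For $u_\gamma(t)=|t|^{\gamma-1}t$ one has $(-\Delta)^su_\gamma=c_{s,\gamma}u_{\gamma-2s}$ on $\R_+$, with $c_{s,\gamma}>0$ for $2s<\gamma<1$, $c_{s,1}=0$, and $c_{s,\gamma}<0$ for $1<\gamma<1+2s$. Your claim that $C(\gamma)>0$ on all of $(2s,1+2s)$ is false. It even contradicts your own remark that $x_n$ is $s$-harmonic in the anti-symmetric sense. The convexity heuristic points the wrong way: $t^\gamma$ is convex on $(0,\infty)$ exactly when $\gamma>1$, which is where the sign is negative. Also, $x_n^{2s}$ is not $s$-harmonic here; $x_n$ is.

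Correspondingly, the hypothesis is not $\alpha<1+2s$ but exactly $\alpha<1$:
\[
\alpha<1\iff 2s(1+p)<1-pq\iff (q+2s)p+2s<1 .
\]
Together with $q\le p$ (so $\beta\le\alpha$) this gives $2s<\beta\le\alpha<1$, which is precisely the positivity range. With your identification the argument proves too much. For $p=q=1-2s$ one gets $\alpha=\beta=1<1+2s$, and you would produce a solution $(at,bt)$ in a case where the paper proves non-existence. In fact $c_{s,1}=0$ forces $b^p=0$ there.

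To close the gap you must actually prove the sign on $(2s,1)$. The paper substitutes $r=tx$ to get $c_{s,\gamma}=c_{1,s}\big(F(s,\gamma)+s^{-1}\big)$. It then folds $(1,\infty)$ onto $(0,1)$ by $x\mapsto 1/x$ to obtain
\[
F(s,\gamma)=\int_0^1\Big(\frac{1}{(1-x)^{1+2s}}-\frac{1}{(1+x)^{1+2s}}\Big)\big(1+x^{2s-1}-x^\gamma-x^{2s-\gamma-1}\big)\D x .
\]
The second factor is strictly decreasing in $\gamma$, and the direct computation $(-\Delta)^s t=0$ gives $F(s,1)+s^{-1}=0$. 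Your Mellin/Gamma route would also work: on this range the multiplier is a positive factor times $1/\Gamma(\frac{1-\gamma}{2})$, which vanishes and changes sign at $\gamma=1$. But it must be carried out, and its outcome is the opposite of what you anticipated.

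Your fallback does not rescue the argument. Adding a multiple of $x_n$ leaves the fractional Laplacian unchanged, so if $c_{s,\alpha}\le 0$ the profile cannot be a supersolution. In the Green formulation, the potential of $r^{p\beta}$ converges only if $p\beta<1-2s$, which is again $\alpha<1$. The paper's separate nonhomogeneous solution iterates $(u,v)\mapsto\big(t+c_s'\int_0^\infty G\,v^p\D r,\ c_s'\int_0^\infty G\,u^q\D r\big)$ with $G=|t-r|^{2s-1}-|t+r|^{2s-1}$. It starts from $0$, stays below the supersolution $\big(K(t+t^\alpha),\,M(t^{q+2s}+t^\beta)\big)$, and it too relies on $\alpha<1$.
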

	We will prove Proposition~\ref{D=0_no_sol} and Proposition~\ref{D<0_exist_sol}. Then the non-existence when $p=q=1-2s$ in Theorem~\ref{non_exist} and the existence when $0<q<p=1-2s$ in Theorem~\ref{exist} follow as corollaries by setting $2s+p=1$.
		
	Historically, if $u$ and $v$ are positive functions in the whole space $\R^n$ with $s=1$, \eqref{problem} reduces to the classical Lane--Emden system
	\[
	\begin{cases}
	-\Delta u=v^p,\\
	-\Delta v=u^q
	\end{cases}
	\text{ in } \mathbb R^n.
	\]
	The Sobolev critical hyperbola
	\[
	\frac{1}{p+1}+\frac{1}{q+1}=\frac{n-2}{n}
	\]
	replaces the scalar critical exponent $\frac{n+2}{n-2}$. Serrin and Zou \cite{SerrinZou1998} showed that positive
	radial entire solutions exist when $(p,q)$ lies on or above the critical
	hyperbola. In the subcritical region, the Lane--Emden conjecture predicts the non-existence of positive entire solutions. A number of Liouville--type results were established by de Figueiredo and Felmer \cite{Figueiredo1994ALT}, Serrin and Zou \cite{SerrinZou1996}, and Pol\'a\v cik, Quittner, and Souplet \cite{Polacik2007}. The conjecture is known in dimensions $n\leq4$, with the four-dimensional case proved by Souplet \cite{Souplet2009}. Recently, OpenAI \cite{OpenAI2026} anounced a complete proof of the (H\'enon--)Lane--Emden conjecture by the localized potential-kernel virial and source-layer interval-pressure method.
	
	For the fractional counterpart of the classical Lane--Emden system \eqref{frac_Lane_Emden}, the natural analogue of the Sobolev critical hyperbola is
	\[\frac{1}{p+1}+\frac{1}{q+1}=\frac{n-2s}{n}.\]
	The integral form of the critical fractional Lane--Emden system also appears as the Euler--Lagrange equation of the sharp Hardy--Littlewood--Sobolev inequality for 
	\begin{equation}\label{HLS_inequality}
		\begin{aligned}
			\left|\iint f(x)|x-y|^{2s-n}g(y)\D x\D y\right|\leq&N_{\sigma,s,n}\|f\|_{L^\sigma(\R^n)}\|g\|_{L^\tau(\R^n)},\\
			1<\sigma,\tau<\infty,&\frac{1}{\sigma}+\frac{1}{\tau}=1+\frac{2s}{n},
		\end{aligned}
	\end{equation}
	see \cite[Theorem 2.3]{Lieb1983}. Quaas and Xia established Liouville--type theorems for \eqref{frac_Lane_Emden} in the whole space in a substantial range of exponents
	\cite{MR3538413}, using the Caffarelli--Silvestre extension together with moving-plane arguments. Inspired by \cite{OpenAI2026}, Mei and Wei \cite{MeiWei2026} presented a proof of the fractional Lane--Emden conjecture.
	
	A natural model of problems in the upper half space is to ask the functions to vanish in the lower half space, i.e. the Dirichlet problem. The boundary condition and the nonlocality of the fractional Laplacian create additional difficulties. For the scalar Dirichlet problem
	$$\begin{cases}
		(-\Delta)^s u=u^p & \text{in }\mathbb R^n_+,\\
		u=0 & \text{in }\mathbb R^n\setminus\mathbb R^n_+,
	\end{cases}$$
	Chen, Fang, and Yang \cite{Chen2015} proved Liouville--type theorems in the critical and subcritical cases without imposing growth restrictions at infinity. Quaas and Xia \cite{Quaas2015} obtained related non-existence results for fractional equations and systems in the half-space, while Luo and Zou \cite{LuoZou16} studied the corresponding fractional integral systems. In this paper, we instead focus on a different half-space formulation, anti-symmetric solutions, i.e. $u(x',-x_n)=-u(x',x_n)$.
		
	To prove Theorem~\ref{non_exist}, we apply the method of moving planes, which was originally developed by Alexandrov and then introduced to the PDE community by Serrin \cite{serrin1971} who considered an overdetermined boundary value problem and proved that the domain must be a ball and the solution is radially symmetric. The method is then developed by Gidas, Ni, and Nirenberg \cite{gidas1979,Gidas_Ni_Nirenberg} to unbounded domains, and by Troy \cite{troy1981} to systems of semilinear elliptic equations
	$$	\Delta u_i + f_i(u_1,\dots,u_k) = 0,\quad i=1,\dots,k,	$$
	in bounded domains. See also \cite{chen1991} for a simplification and \cite{ChenLiOu,Chen2017} for applications to fractional equations. 	
	
	The proof of the non-existence result when $1-2s<q\leq\dfrac{n+2s}{n-2s}$ requires several ingredients adapted specifically to anti-symmetric functions. We first establish lower-bound estimates for solutions of \eqref{problem}, i.e. 
	$$u(x)\geq c_Rx_n,~v(x)\geq c_Rx_n,~x\in B_R(0)\cap\R^n_+$$
	for any given $R$. The key point is a maximum principle adapted to functions that are simultaneously reflected with respect to the boundary of the half-space and to the moving plane, which plays an important role when we apply a direct method of moving planes after performing a Kelvin transform. This shows that any nontrivial solution must depend only on the vertical variable \(x_n\).
	
	This dimensional reduction transforms the original system in \(\mathbb R^n_+\) into a one-dimensional anti-symmetric fractional system on \(\mathbb R_+\). A second moving-plane argument, where we compare the values at $t\in(0,\lambda)$ and $2\lambda-t$, yields monotonicity of both components. Combining this monotonicity with the integral lower bounds produces an iterative sequence of polynomial growth estimates with blowing-up exponents. This establishes the desired Liouville--type theorem.

	The proof of the non-existence when $(q+2s)p+2s=1$, Proposition~\ref{D=0_no_sol}, is more direct. Instead of deriving Liouville--type theorems, we obtain lower--bound estimates for $u$ and $v$ in a fixed cone away from the tangential direction, which imply linear growth of $v$. This yields a blow-up in the Green representation.	
	
	In the proof of Proposition~\ref{D<0_exist_sol}, the existence argument is of a different nature. We begin by constructing suitable super-solutions and applying a monotone iteration scheme to obtain genuine nontrivial solutions belonging to the natural weighted space \(L_{2s+1}\). Besides, by computing the fractional Laplacian of anti-symmetric homogeneous functions of the form
	\[ 	u_\gamma(t)=|t|^{\gamma-1}t, \]
	we identify appropriate homogeneous profiles for the coupled system. The two classes of solutions differ due to the distinct asymptotic behavior at infinity. In fact, the homogeneous profile provides a lower bound for every positive solution; see Proposition~\ref{homogeneous_min}.
	
	We now describe some nontrivial techniques and observations in the proof. The first obstacle arises when applying the moving plane arguemnt. Due to the borderline exponent $p=1-2s$, the lower bound of the Kelvin transform of $v$ has the same decay rate as the term arising from the maximum principle. As a consequence, the previous
	argument does not directly yield a comparison between $\omega_\lambda$ and $\zeta_\lambda$, nor a localization estimate for the point where the negative minimum is attained. To overcome this difficulty, we compare $\omega_\lambda$ with $\epsilon\zeta_\lambda$ for some sufficiently small $\epsilon$ to ensure that $\epsilon|\zeta_\lambda|\leq|\omega_\lambda|$ at the extremal point and thereby obtain the required localization estimate.
	
	The second observation concerns the sign of the fractional Laplacian of the homogeneous profiles. For  $u_\gamma(t)=|t|^{\gamma-1}t$, the sign of $(-\Delta)^su_\gamma$ changes as the homogeneity exponent $\gamma$ crosses 1. The homogeneity exponents of the profiles in our construction are carefully checked to lie in $(2s,1)$.
	
	The third remarkable observation is on the homogeneity of the integrands in the iterations of the lower--bound estimates and the construction of the super-solutions. This homogeneity allows the relevant convolution with the fractional potential kernel to be computed explicitly, considerably simplifying the estimates compared with those in \cite{Zhuo2022,ZhuoLi2023}.
		
	The paper is organized as follows. Section~\ref{sec_notation_lem} introduces notation and establishes fundamental lemmas, including the maximum principle for anti-symmetric functions and the crucial lower--bound estimate. Section~\ref{sec_non_D<0} is devoted to the proof of non-existence part of Theorem~\ref{non_exist}, with the method of moving planes adapted to the anti-symmetric system, the reduction to one dimension, and the iterative growth argument leading to non-existence. Section~\ref{sec_non_D=0} contains the proof of Proposition~\ref{D=0_no_sol} by an iteration of lower--bound estimates. Section~\ref{sec_constrict_q<p} consists of the construction of both solutions of linear growth and homogeneous solutions when $(q+2s)p+2s<1$, which proves Proposition~\ref{D<0_exist_sol} with an additional comparison result.
\section{Notation and Preliminary Lemmas}\label{sec_notation_lem}

	Equation \eqref{problem} may be understood in the distributional sense. For this purpose, let $\phi\in C^\infty_\mathrm{c}(\R^n_+)$ and extend it to the whole space by $\phi(x^*)=-\phi(x),x\in\R^n_+$. Then
	$$(-\Delta)^s\phi(x)=O(|x|^{-2s-n-1}),\,x\to\infty.$$
	Therefore, we call a pair of anti-symmetric functions
	$$u\in L_{2s+1}(\R^n)\cap L^p_{\mathrm{loc}}(\R^n),v\in L_{2s+1}(\R^n)\cap L^q_{\mathrm{loc}}(\R^n)$$
	a weak solution of \eqref{problem} if they are continuous at any point on $\partial\R^n_+$ and satisfy
	\begin{equation*}
		\int_{\R^n_+}u\cdot(-\Delta)^s\phi=\int_{\R^n_+}v^p\phi;\;\int_{\R^n_+}v\cdot(-\Delta)^s\phi=\int_{\R^n_+}u^q\phi,~~~\forall\phi\in C^\infty_\mathrm{c}(\R^n_+).
	\end{equation*}
	However, standard regularity theory implies that $u,v\in C^{0,1}_{\mathrm{loc}}(\R^n_+)$. We provide the details below. In Theorem~\ref{non_exist}, Proposition~\ref{D=0_no_sol} and Proposition~\ref{D<0_exist_sol}, we always have $p<1$ or $q<1$. Hence, without loss of generality, we assume $p<1$ when deriving the regularity.
	
	For any closed ball $B\subset\R^n_+$ we consider
	$$u_1(x)=c\int_{B}\big(|x-y|^{2s-n}-|x-y^*|^{2s-n}\big)v(y)^p\D y,~u_2=u-u_1,$$
	where $c$ is chosen such that $(-\Delta)^su_2=0$ in $\mathcal{D}'(B)$, i.e.
	$$\int_{\R^n_+}u_2\cdot(-\Delta)^s\phi=0,~\forall\phi\in C^\infty_{\mathrm{c}}(\R^n_+).$$
	Notice that $u_1$ is anti-symmetric and the function
	$$x\mapsto\int_B|x-y^*|^{2s-n}v(y)^p\D y$$
	is smooth in $B$. Therefore the only regularity issue for $u_1$ is the singular integral
	$$\int_B|x-y|^{2s-n}v(y)^p\D y:$$
	\begin{lem}\label{Riezs_regular}
		Let $f$ be a nonnegative measurable function in $\R^n$ with compact support. Recall that the Riesz potential is defined by $I_{2s}f=|\cdot|^{2s-n}*f$. Then the following holds:
		\begin{equation*}
			I_{2s}f\in
			\begin{cases}
				L^{p^*}(\R^n),&~\text{if}~f\in L^r(\R^n),~1<r,2sr<n,r^*:=(nr)/(n-2sr);\\
				\dot C^\theta(\R^n),&~\text{if}~f\in L^r(\R^n),~n<2sr<\infty,\theta:=2s-n/r;\\
				C^{2s+\theta}(0),&~\text{if}~f\in C^\theta(0),~2s+\theta\notin\mathbb{Z}.
			\end{cases}
		\end{equation*}
	\end{lem}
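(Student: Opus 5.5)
This is the standard Riesz potential lemma, and I will prove its three parts separately.

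\emph{First case.} This is the Hardy--Littlewood--Sobolev inequality. I will invoke \eqref{HLS_inequality} in its dual form, namely the boundedness $I_{2s}:L^r(\R^n)\to L^{r^*}(\R^n)$ for $1<r<n/(2s)$. (The statement writes $p^*$; it should read $r^*$.) Alternatively, Hedberg's trick gives the same bound directly. Split
$$I_{2s}f(x)=\int_{|x-y|<\delta}+\int_{|x-y|\geq\delta}.$$
The near part is bounded by $C\delta^{2s}Mf(x)$, where $M$ is the maximal function. By H\"older, the far part is bounded by $C\delta^{2s-n/r}\|f\|_{L^r}$. Optimizing in $\delta$ gives
$$|I_{2s}f(x)|\leq C\,(Mf(x))^{1-2sr/n}\,\|f\|_{L^r}^{2sr/n},$$
and the maximal inequality finishes this case.

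\emph{Second case.} This is a H\"older estimate from Morrey-type cancellation. Fix $x,z$ and set $h=|x-z|$. Split the integral into the region $B_{2h}(x)$ and its complement.

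On $B_{2h}(x)$, estimate each of $|x-y|^{2s-n}$ and $|z-y|^{2s-n}$ separately by H\"older with exponent $r'$. Since $2sr>n$, we have $(2s-n)r'>-n$, so
$$\big\||\cdot|^{2s-n}\big\|_{L^{r'}(B_{3h})}\leq Ch^{2s-n/r}.$$

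Off $B_{2h}(x)$, use the mean value bound
$$\big||x-y|^{2s-n}-|z-y|^{2s-n}\big|\leq Ch|x-y|^{2s-n-1}.$$
H\"older then gives
$$Ch\,\big\||\cdot|^{2s-n-1}\big\|_{L^{r'}(\R^n\setminus B_{2h})}\|f\|_{L^r}\leq Ch\cdot h^{2s-1-n/r}\|f\|_{L^r}.$$
Here we need $(2s-n-1)r'<-n$, i.e.\ $\theta=2s-n/r<1$, which is the natural restriction implicit in the statement. Compact support guarantees that $I_{2s}f$ is finite, so the seminorm bound gives $I_{2s}f\in\dot C^\theta$ with $\theta=2s-n/r$.

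\emph{Third case.} This is the pointwise Schauder-type statement at $0$. The hypothesis means $f$ admits an expansion at $0$ by a polynomial $P$ of degree $<\theta$ with remainder $O(|y|^\theta)$. I will decompose $f=f\chi_{B_1}+f\chi_{B_1^c}$. The far piece contributes a function smooth near $0$. For the near piece, write $f=P\chi_{B_1}+g$ with $|g(y)|\leq C|y|^\theta$. The potential of $P\chi_{B_1}$ is smooth near $0$, since $P$ is smooth and the Newtonian-type potential of a smooth function times the indicator of $B_1$ is smooth inside $B_1$. It then remains to show that $I_{2s}g$ is approximated at $0$ by a polynomial of degree $<2s+\theta$ with error $O(|x|^{2s+\theta})$.

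To do this, Taylor expand the kernel $|x-y|^{2s-n}$ in $x$ on the region $|y|>2|x|$ up to order $k=\lfloor 2s+\theta\rfloor$. The resulting coefficients
$$\int_{|y|>2|x|}\partial^\alpha|y|^{2s-n}\,g(y)\D y$$
converge as $x\to0$, to within errors of size $|x|^{2s+\theta-|\alpha|}$, precisely because $2s+\theta\notin\mathbb Z$. The remaining pieces are: the integral over $|y|<2|x|$, which is bounded by $C|x|^{2s+\theta}$ by scaling, and the Taylor remainder, which is bounded by
$$C|x|^{k+1}\int_{|y|>2|x|}|y|^{2s-n-k-1+\theta}\D y\leq C|x|^{2s+\theta}.$$

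\emph{Main obstacle.} I expect the bookkeeping in this third case to be the hardest step, since the non-integer condition is exactly what is needed to avoid logarithms in the Taylor coefficients. Since the lemma is classical (see Stein's Singular Integrals and Gilbarg--Trudinger, Lemma 4.4), I would cite it for the routine details.
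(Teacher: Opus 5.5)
Your proposal is correct, and for the first two cases it follows the paper. The first case is HLS, with Hedberg's argument as an optional self-contained alternative, and you are right that $p^*$ should read $r^*$.

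In the second case the paper uses the same H\"older estimate in one step. It bounds $|I_{2s}f(x)-I_{2s}f(y)|$ by $\|f\|_{L^r}$ times the $L^{r'}$ norm of the full kernel difference $|x-\cdot|^{2s-n}-|y-\cdot|^{2s-n}$. By translation and dilation this norm equals $\bigl\||e_1-\cdot|^{2s-n}-|\cdot|^{2s-n}\bigr\|_{L^{r'}}\,|x-y|^\theta$, which is finite because $(2s-n)r'>-n>(2s-n-1)r'$. Your split into $B_{2h}(x)$ and its complement checks these two integrability conditions separately and gives the same bound, so it is the same argument done by hand. Your remark that $\theta<1$ is needed is correct. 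It is also implicit in the paper's inequality $-n>(2s-n-1)r'$, and it holds automatically there since $s<\frac12$.

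The real difference is the third case. The paper only cites Li--Wu, while you sketch the classical proof:
\begin{itemize}
\item subtract the Taylor polynomial $P$ of $f$ at $0$;
\item treat $I_{2s}$ of the polynomial part as smooth near $0$;
\item expand the kernel on $\{|y|>2|x|\}$ to order $k=\lfloor 2s+\theta\rfloor$.
\end{itemize}
This makes the lemma self-contained. It also shows exactly where $2s+\theta\notin\mathbb Z$ is used: the coefficients $\int\partial^\alpha|y|^{2s-n}g(y)\,dy$ are integrable at $0$ for $|\alpha|\le k$, and the remainder $|y|^{2s+\theta-n-k-1}$ is integrable at infinity.

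One small fix: $|f-P|\le C|y|^\theta$ is only known on some small ball $B_\delta$, so split at $B_\delta$ rather than $B_1$. Also, smoothness of the far piece needs $f\in L^1_{\mathrm{loc}}$. The statement's hypotheses do not give this, but it holds in the paper's application.
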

	\begin{proof}
		The first statement is the classical Hardy-Littlewood-Sobolev inequality \eqref{HLS_inequality}. If $n<2sr<\infty$, since $f$ has compact support, H\"older's inequality implies that $I_{2s}f$ is well-defined pointwise. Then we estimate $I_{2s}f(x)-I_{2s}f(y)$ by H\"older's inequality:
		\begin{align*}
			|I_{2s}f(x)-I_{2s}f(y)|\leq&\left(\int_{\R^n}\big||x-z|^{2s-n}-|y-z|^{2s-n}\big|^{r'}\D z\right)^{\tfrac{1}{r'}}\|f\|_{L^r}\\
			=&\left(\int_{\R^n}\big||e_1-z|^{2s-n}-|z|^{2s-n}\big|^{r'}\D z\right)^{\tfrac{1}{r'}}\|f\|_{L^r}|x-y|^\theta.
		\end{align*}
		Here the integral converges, since $(2s-n)r'>-n>(2s-n-1)r'$. We refer to \cite{Li2021} for the proof of the last assertion.
	\end{proof}
	
	Since $(-\Delta)^su_2=0$ in $\mathcal{D}'(B)$, we claim that $u_2$ is smooth:
	\begin{lem}\label{s_harmonic_smooth}
		Let $w\in L_{2s+1}(\R^n)$ be anti-symmetric and $(-\Delta)^sw=0$ in the distributional sense in $B\subset\subset\R^n_+$. Then $w$ is smooth in $B$.
	\end{lem}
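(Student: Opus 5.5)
Since $(-\Delta)^s w=0$ only in the distributional sense and $w$ lies merely in $L_{2s+1}$, I will not work with $w$ directly. The plan is to mollify $w$, so that the anti-symmetric $s$-harmonicity is transported to smooth functions. I will then get interior derivative bounds from a Poisson-type representation on a ball, with constants depending only on the weighted $L_{2s+1}$ norm, and pass to the limit.

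First, fix balls $B'\subset\subset B''\subset\subset B$ and a mollifier $\eta_\epsilon$ that is radial in $x'$ and even in $x_n$, with $\epsilon<\operatorname{dist}(B'',\partial B)$. Then $w_\epsilon=w*\eta_\epsilon$ is smooth and anti-symmetric. Moreover $(-\Delta)^s w_\epsilon=0$ pointwise in $B''$. To see this, test against $\phi\in C^\infty_c(B'')$ extended anti-symmetrically. Convolution commutes with $(-\Delta)^s$, and $\eta_\epsilon*\phi$ is still an admissible anti-symmetric test function supported in $B$ for the lower component. The pairing $\int_{\R^n_+}w\cdot(-\Delta)^s\phi$ only needs the $O(|x|^{-n-2s-1})$ decay of $(-\Delta)^s\phi$, which is preserved under mollification. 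Also $\|w_\epsilon\|_{L_{2s+1}}\le C\|w\|_{L_{2s+1}}$ uniformly in $\epsilon$.

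Second, for a smooth anti-symmetric $s$-harmonic function in a ball $B_r(x_0)\subset B''$, I use the fractional Poisson kernel representation
$$w_\epsilon(x)=\int_{|y-x_0|>r}P_r(x-x_0,y-x_0)\,w_\epsilon(y)\D y,\qquad P_r(x,y)=C\Big(\frac{r^2-|x|^2}{|y|^2-r^2}\Big)^s\frac{1}{|x-y|^n}.$$
Anti-symmetry lets me fold the exterior integral onto $\R^n_+$. The result has kernel $P_r(x,y)-P_r(x,y^*)$ for $y\in\R^n_+$, plus the contribution from the parts of the reflected region lying inside $B_r(x_0)^*$. The latter is far from $B_r(x_0)$, so it gives a smooth bounded term. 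The folded kernel decays like $|y|^{-n-2s-1}$, the same cancellation the paper notes for \eqref{def_in_L2s+1}. Differentiating under the integral in $x\in B_{r/2}(x_0)$ then gives
$$\|D^k w_\epsilon\|_{L^\infty(B_{r/2}(x_0))}\le C_{k,r,x_0}\|w_\epsilon\|_{L_{2s+1}}.$$
The only problem is the annulus near $|y|=r$, where $(|y|^2-r^2)^{-s}$ is singular but integrable. I handle it by averaging the representation over $r\in(r_1,r_2)$, which turns that factor into a bounded weight and uses only the local $L^1$ norm of $w_\epsilon$.

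Finally, Arzelà–Ascoli gives a subsequence with $w_\epsilon\to\tilde w$ in $C^k(B')$ for every $k$. Since $w_\epsilon\to w$ in $L^1_{\rm loc}$, we get $w=\tilde w$ a.e. in $B'$, so $w$ is smooth in $B'$, and by covering, in $B$.

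I expect the main obstacle to be the second step: justifying the Poisson representation for functions that are only in $L_{2s+1}$ rather than $L_{2s}$. Here the anti-symmetric folding is essential, because the standard kernel applied to $|w|$ need not converge. I would first prove the representation for anti-symmetric $w_\epsilon$ truncated by even cutoffs $\chi_R$ in $x'$ and $|x_n|$. The truncation error in $(-\Delta)^s$ is controlled in $B''$ by the $O(|y|^{-n-2s-1})$ kernel cancellation, and it vanishes as $R\to\infty$.
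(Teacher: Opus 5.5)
Your argument is correct, but it takes a different route from the paper's. The paper does not mollify and does not use any kernel on balls. It takes one even cutoff $\eta\equiv1$ near $\overline B\cup\overline{B^*}$ and writes $w=\eta w+(1-\eta)w$. For $x\in B$, the far piece gives
$$F(x)=-c_{n,s}\int_{\R^n_+}\big(|x-y|^{-n-2s}-|x-y^*|^{-n-2s}\big)(1-\eta(y))w(y)\D y,$$
which is $C^\infty$ in $B$ because every $x$-derivative of the folded kernel is $O(|y|^{-n-2s-1})$. Then $\eta w\in L_{2s}$ solves $(-\Delta)^s(\eta w)=-F$ in $\mathcal D'(B)$, and the paper cites the Li--Wu interior regularity theorem for distributional solutions.

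You instead prove the regularity by hand. You mollify, establish a folded Poisson-kernel representation for the smooth anti-symmetric $s$-harmonic functions $w_\epsilon$, and derive $\epsilon$-uniform $C^k$ bounds, averaging in $r$ to handle $(|y|^2-r^2)^{-s}$. Arzel\`a--Ascoli then finishes the argument.

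Both proofs rest on the same cancellation of the reflected kernel. Your truncation by $\chi_R$ is the paper's splitting applied to $w_\epsilon$ with the cutoff sent to infinity, and the error you control is the analogue of $F$. The paper's version is much shorter but treats Li--Wu as a black box. Yours needs only the classical Green/Poisson representation on balls for smooth $L_{2s}$ functions. It also yields the quantitative bound $\|D^kw\|_{L^\infty(B')}\le C\|w\|_{L_{2s+1}}$.

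One step should be stated explicitly: the passage from ``$(-\Delta)^sw_\epsilon=0$ in $\mathcal D'(B'')$'' to ``$=0$ pointwise''. For a smooth anti-symmetric function that lies only in $L_{2s+1}$, the identity
$$\int_{\R^n_+}w_\epsilon(-\Delta)^s\phi=\int_{\R^n_+}(-\Delta)^sw_\epsilon\,\phi$$
needs the same even-cutoff truncation and kernel cancellation. It goes through without difficulty.
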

	\begin{proof}
		Choose $\eta\in C_c^\infty(\mathbb R^n)$ such that
		$$\eta(y^*)=\eta(y),\qquad\eta\equiv1$$
		in a neighborhood of $\overline B\cup\overline{B^*}$, and write
		$$w=w_1+w_2,\qquad w_1=\eta w,\qquad w_2=(1-\eta)w.$$
		Then $w_1,w_2$ are anti-symmetric and $w_1\in L_{2s}(\mathbb R^n)$.
		
		For $x\in B$, since $w_2(x)=0$, by anti-symmetry,
		$$(-\Delta)^s w_2(x)=-c_{n,s}\int_{\mathbb R^n_+}\left(\frac1{|x-y|^{n+2s}}-\frac1{|x-y^*|^{n+2s}}\right)(1-\eta(y))w(y)\,dy=:F(x).$$
		Since $\operatorname{supp}(1-\eta)$ is separated from $B$, there is no singularity in the above integral. Moreover, for $x$ in any compact subset of $B$ and $|y|$ sufficiently large,
		$$\left|D_x^\alpha\left(\frac1{|x-y|^{n+2s}}-\frac1{|x-y^*|^{n+2s}}\right)\right|\le\frac{C_{\alpha,B}}{|y|^{n+2s+1}}$$
		for every multi-index $\alpha$. Hence, by $w\in L_{2s+1}(\mathbb R^n)$, we may differentiate under the integral sign and obtain $F\in C^\infty(B).$
		
		Since $(-\Delta)^s w=0$ in $\mathcal{D}'(B)$ and $w_1\in L_{2s}(\R^n)$ is anti-symmetric,
		$$\langle w_1,(-\Delta)^s\phi\rangle=\int_{\R^n_+}(w-w_2)(-\Delta)^s(\phi-\phi^*)=-\int_{\R^n_+}(-\Delta)^sw_2\cdot(\phi-\phi^*)=-\int F\phi,$$
		where $\phi\in C^\infty_{\mathrm{c}}(B)$ and $\phi^*(x)=\phi(x^*)$. Thus $w_1\in L_{2s}(\R^n)$ satisfies $(-\Delta)^sw_1=-F$ in $\mathcal{D}'(B)$. The interior regularity for distributional solutions of the fractional Laplacian \cite{Li2021} implies $w_1\in C^\infty(B)$. Since $\eta\equiv1$ in a neighborhood of $\overline B$, we have
		$w=w_1$ in $B$. Therefore $w\in C^\infty(B)$.
	\end{proof}

	We claim that weak solutions are H\"older continuous: since $v^p\in L^{1/p}_{\mathrm{loc}}$, we have $u_1\in L^{(1/p)^*}$. But Lemma~\ref{s_harmonic_smooth} states that $u_2$ is smooth in $B$, hence $u\in L^{(1/p)^*}_{\mathrm{loc}}(B)$, where $B$ is an arbitrary ball in $\R^n_+$. Thus $u\in L^{(1/p)^*}_{\mathrm{loc}}(\R^n_+)$. Similarly $v\in L^{((1/p)^*/q)^*}_{\mathrm{loc}}$ if $2s(p^{-1})^*<qn$. Note that we derived that $v\in L^{\gamma_k}_{\mathrm{loc}}$ for $k=0,1$, where
	$$\gamma_0=1,~\gamma_{k+1}=\big(\big(\frac{\gamma_k}{p}\big)^*\frac{1}{q}\big)^*.$$
	We may iterate the arguments whenever $\dfrac{2s\gamma_k}{p}<n$ and $\big(\dfrac{\gamma_k}{p}\big)^*\dfrac{2s}{q}<n$. However, the process must stop after finitely many steps. Otherwise, we write
	$$\frac{1}{\gamma_{k+1}}=\frac{pq}{\gamma_k}-\frac{2s}{n}(q+1).$$
	If $pq<1$, the sequence converges to a negative limit; else, by a simple computation,
	$$1\leq pq<1+\dfrac{2s}{n}(q+1),$$
	it diverges to $-\infty$, a contradiction.
	
	Hence we must have $\dfrac{2s\gamma_k}{p}\geq n$ or $\big(\dfrac{\gamma_k}{p}\big)^*\dfrac{2s}{q}\geq n$ for some $k$. If $\dfrac{2s\gamma_k}{p}=n$ holds, we derive $u\in \mathop{\cap}\limits_{r<\infty}L^r_{\mathrm{loc}}(\R^n)$, hence $v$ is H\"older continuous. If$\big(\dfrac{\gamma_k}{p}\big)^*\dfrac{2s}{q}>n$, then the second assertion of Lemma~\ref{Riezs_regular} yields the H\"older continuity of $v$. The remaining cases are treated similarly. Thus either $u$ or $v$ is H\"older continuous. Then we apply the third assertion of Lemma~\ref{Riezs_regular} iteratively to derive the smoothness of $u,v$ in the upper half-space. Therefore, we do not distinguish between weak and classical solutions.
	
	The following lemma provides lower bounds for the solutions $u,v$:
	\begin{lem}\label{lower_bound}
		Let $u,v$ satisfy \eqref{problem}. Then
		$$u(x)\geq c\int_{\R^n_+}\left(|x-y|^{2s-n}-|x-y^*|^{2s-n}\right)v(y)^p\D y,\,\forall x\in\R^n_+.$$
		$$v(x)\geq c\int_{\R^n_+}\left(|x-y|^{2s-n}-|x-y^*|^{2s-n}\right)u(y)^q\D y,\,\forall x\in\R^n_+.$$
		In particular, if $u,v$ are not identically 0, they have a positive lower bound on each compact subset of $\R^n_+$. Furthermore, given any $R>0,~\min\{u(x),v(x)\}>c_Rx_n$ for $x\in B_R^+:=B_R(0)\cap\R^n_+$.
	\end{lem}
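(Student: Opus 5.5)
Write $G(x,y)=|x-y|^{2s-n}-|x-y^*|^{2s-n}$, which is positive for $x,y\in\R^n_+$. I will compare $u$ with the Green potential of $v^p$ on expanding half-balls and then let the radius tend to infinity.

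\textbf{Truncated potentials.} For $R>0$ set
$$w_R(x)=c\int_{B_R^+}G(x,y)\,v(y)^p\D y .$$
Its extension by anti-symmetry is an anti-symmetric function, and $w_R=O(|x|^{2s-n-1})$ at infinity because the kernel $G$ decays like $|x|^{2s-n-1}$ for $y$ in a bounded set. Since $v^p$ is bounded on $B_R^+$, $w_R$ is continuous. With $c$ normalized as in the regularity discussion, $(-\Delta)^s w_R=v^p\chi_{B_R^+}$ in $\R^n_+$: this is the odd reflection of the Riesz potential identity $(-\Delta)^s I_{2s}f=c^{-1}f$.

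\textbf{Comparison.} Put $h_R=u-w_R$. Then $h_R$ is anti-symmetric, lies in $L_{2s+1}$, and satisfies $(-\Delta)^s h_R=v^p(1-\chi_{B_R^+})\ge 0$ in $\R^n_+$. Moreover $\liminf_{|x|\to\infty,\,x\in\R^n_+} h_R\ge 0$, because $u\ge0$ and $w_R\to0$. I then apply a maximum principle for anti-symmetric functions. Suppose $h_R$ had a negative value in $\R^n_+$. By the decay at infinity and continuity up to $\partial\R^n_+$ (where $h_R=0$), a negative minimum would be attained at some $x_0\in\R^n_+$. At $x_0$, formula \eqref{def_in_L2s+1} shows that the first integral is $\le0$, since $G$-type kernel differences are positive and $h_R(x_0)-h_R(y)\le 0$. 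The second term $2c_{n,s}h_R(x_0)\int|x_0-y^*|^{-n-2s}\D y$ is strictly negative. Hence $(-\Delta)^s h_R(x_0)<0$, a contradiction. So $u\ge w_R$ for every $R$.

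\textbf{Conclusion of the integral bounds.} Letting $R\to\infty$ and using monotone convergence gives the first inequality; the bound for $v$ is proved identically.

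\textbf{Positivity.} If $v\not\equiv0$, then $v^p>0$ on some small ball $B\subset\R^n_+$, by continuity. Then $u(x)\ge c\int_B G(x,y)v^p\D y$, which is positive and continuous on compact subsets of $\R^n_+$. Applying the same argument to $u^q$ gives positivity of $v$, and $u\not\equiv0$ forces $v\not\equiv0$ through the $u$-inequality.

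\textbf{Boundary behaviour.} For $x\in B_R^+$ and $y\in B$ fixed with $y_n\ge\delta>0$, the mean value theorem gives
$$G(x,y)\ge c\,\frac{x_ny_n}{|x-y^*|^{n-2s+2}}\ge c_R\,x_n ,$$
using $|x-y|^2-|x-y^*|^2=-4x_ny_n$. Integrating over $B$ yields $\min\{u,v\}\ge c_Rx_n$.

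The main obstacle is justifying the maximum principle for the rough function $h_R$. Two points need care. First, the pointwise evaluation of $(-\Delta)^s h_R$ at $x_0$ requires $C^{1,1}$ regularity, which $w_R$ may lack near $\partial B_R^+$; I would use $\chi_{B_R^+}$ replaced by a smooth cutoff to get around this. Second, the minimum must be attained at an interior point, which relies on $u$ vanishing continuously on $\partial\R^n_+$.
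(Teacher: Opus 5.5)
Your proposal is correct and follows the paper's proof: the paper also truncates with a smooth radial cutoff $\rho^R$ (exactly the fix you propose for the $C^{1,1}$ issue), shows $u-u^R\ge 0$ by the same negative-minimum argument in \eqref{def_in_L2s+1}, lets $R\to\infty$ by monotone convergence, and obtains $c_Rx_n$ from the same mean-value bound on the kernel. The differences are cosmetic. You get positivity on compact sets by restricting to a small ball where $v^p>0$ instead of the paper's Fatou argument. Also, it is the $v$-inequality (or the $v$-equation), not the $u$-inequality, that makes $u\not\equiv 0$ force $v\not\equiv 0$.
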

	\begin{proof}
		Let $\rho\in C^\infty_\mathrm{c}(B_2;[0,1])$ be radially symmetric and satisfy $\rho=1$ on $B_1$, $\rho^R(x):=\rho(x/R)$ for any $R>0$. Define
		$$u^R(x)=c\int_{\R^n_+}\left(|x-y|^{2s-n}-|x-y^*|^{2s-n}\right)\rho^R(y)v(y)^p\D y,\,x\in\R^n.$$
		Then $u^R$ is anti-symmetric and $u^R(x)=O(|x|^{2s-n-1})$ as $x\to\infty$. Hence
		$$u^R\in L_{2s+1}(\R^n)\cap C^{0,1}_{\mathrm{loc}}(\R^n_+)\cap C(\R^n).$$
		We choose $c=c_s'$ such that
		$$(-\Delta)^su^R=\rho^Rv^p~~\text{in}~~\R^n_+.$$
		The anti-symmetric function $w:=u-u^R$ satisfies
		$$(-\Delta)^sw=(1-\rho^R)v^p\geq0\text{  in  }\R^n_+;~~\liminf_{\R^n_+\ni x\to\infty}w(x)\geq0.$$
		We claim that $w\geq0$ in $\R^n_+$. Otherwise, there exists $x\in\R^n_+$ such that $w(x)=\inf\limits_{\R^n_+}w<0$. Then \eqref{def_in_L2s+1} yields a contradiction. As $R\to\infty$, the monotone convergence theorem implies
		$$u(x)\geq u^\infty(x):=c_s'\int_{\R^n_+}\left(|x-y|^{2s-n}-|x-y^*|^{2s-n}\right)v(y)^p\D y,\,\forall x\in\R^n_+.$$
		
		Let $K$ be any compact subset in $\R^n_+$. Then there exists a sequence $\{x_j\}\subset K$ such that
		$$\lim_{j\to\infty}u^\infty(x_j)=\inf_{x\in K}u^\infty(x).$$
		Passing to a subsequence, we may assume $x_j\to x_\infty\in K$. Then for every $y$,
		$$\left(|x_j-y|^{2s-n}-|x_j-y^*|^{2s-n}\right)v(y)^p\to\left(|x_\infty-y|^{2s-n}-|x_\infty-y^*|^{2s-n}\right)v(y)^p.$$
		By Fatou's lemma,
		$$\int_{\R^n_+}\left(|x_\infty-y|^{2s-n}-|x_\infty-y^*|^{2s-n}\right)v(y)^p\D y\leq\liminf_{j\to\infty}\int_{\R^n_+}\left(|x_j-y|^{2s-n}-|x_j-y^*|^{2s-n}\right)v(y)^p\D y,$$
		i.e.
		$$u^\infty(x_\infty)\leq\liminf_{j\to\infty}u^\infty(x_j)=\inf_{x\in K}u^\infty(x).$$
		Thus the infimum of $u^\infty$ over $K$ is attained, which must be positive if $v$ is not identically 0. Therefore $u$ has a positive lower bound on each compact subset of $\R^n_+$, so does $v$. For $x\in B_R^+$:
		\begin{align*}
			u(x)\geq&c_s'\int_{\R^n_+\cap B_R(2Re_n)}\left(|x-y|^{2s-n}-|x-y^*|^{2s-n}\right)v(y)^p\D y\\
			\geq&c_R\int_{\R^n_+\cap B_R(2Re_n)}\left(|x-y|^{2s-n}-|x-y^*|^{2s-n}\right)\D y\geq c_Rx_n,
		\end{align*}
		by the mean value theorem.
	\end{proof}
	
	To apply the method of moving planes, we introduce the following notation:
	$$\forall\lambda\in\R,\,T_\lambda:=\{x\in\R^n|x_1=\lambda\},\,\Gamma_\lambda=\{x\in\R^n_+|x_1<\lambda\}.$$
	Let $x^\lambda=(2\lambda-x_1,x_2,\cdots,x_n)$ be the reflection of the point $x=(x_1,x_2,\cdots,x_n)$ about the plane $T_\lambda$ and $x^*=(x',-x_n)$ be the reflection of $x$ about the boundary hyperplane $\R^{n-1}$ where $x'=(x_1,\cdots,x_{n-1})\in\R^{n-1}$.
	We will use the following maximum principle for the fractional Laplacian; its proof can be found in \cite{Zhuo2022}:
	\begin{lem}\label{maximum_principle}
		Let $w\in L_{2s+1}(\R^n)\cap C^{0,1}_\mathrm{loc}(\R^n_+)$ be such that
		$$w(y^*)=w(y^\lambda)=-w(y),\,\forall y\in\R^n.$$
		Assume that there exists $x\in\Gamma_\lambda$ such that $w(x)=\inf\limits_{\Gamma_\lambda}w<0$. Then
		$$(-\Delta)^sw(x)\leq c(n,s)w(x)(d^{-2s}+x_n^{-2s})$$
		where $d=\lambda-x_1$ and $c(n,s)$ is some positive constant depending only on $n,s$.
	\end{lem}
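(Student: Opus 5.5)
We argue directly from the singular-integral definition \eqref{def_in_L2s+1}, using both reflections of $w$ to fold the integral over $\R^n$ onto $\Gamma_\lambda$. Write $\Sigma=\{y\in\R^n_+: y_1<\lambda\}=\Gamma_\lambda$. The identities $w(y^*)=-w(y)$ and $w(y^\lambda)=-w(y)$ give $w((y^\lambda)^*)=w(y)$. Hence $\R^n$ splits, up to null sets, into the four images $\Gamma_\lambda$, $\Gamma_\lambda^*$, $\Gamma_\lambda^\lambda$ and $(\Gamma_\lambda^\lambda)^*$. Substituting $y\mapsto y^*$, $y\mapsto y^\lambda$ and $y\mapsto (y^\lambda)^*$, we obtain for $x\in\Gamma_\lambda$
\begin{align*}
(-\Delta)^s w(x)=c_{n,s}\,\mathrm{P.V.}\!\int_{\Gamma_\lambda}\Big[&\frac{w(x)-w(y)}{|x-y|^{n+2s}}+\frac{w(x)+w(y)}{|x-y^*|^{n+2s}}\\
&+\frac{w(x)+w(y)}{|x-y^\lambda|^{n+2s}}+\frac{w(x)-w(y)}{|x-(y^\lambda)^*|^{n+2s}}\Big]\D y .
\end{align*}
This rearrangement is legitimate for $w\in L_{2s+1}$: at infinity the combined kernel $|x-y|^{-n-2s}-|x-y^*|^{-n-2s}-|x-y^\lambda|^{-n-2s}+|x-(y^\lambda)^*|^{-n-2s}$ is $O(|y|^{-n-2s-1})$, and near $x$ only the first term is singular, which is handled by the local Lipschitz regularity of $w$ as in the definition.

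We regroup the bracket as
\[
\big(w(x)-w(y)\big)\big(K_1-K_2-K_3+K_4\big)+2w(x)\big(K_2+K_3-K_4\big),
\]
where $K_1=|x-y|^{-n-2s}$, $K_2=|x-y^*|^{-n-2s}$, $K_3=|x-y^\lambda|^{-n-2s}$ and $K_4=|x-(y^\lambda)^*|^{-n-2s}$.

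\emph{Sign of the first group.} For $x,y\in\Gamma_\lambda$ we have $K_1\ge K_2$ and $K_3\ge K_4$, and moreover $K_1-K_2\ge K_3-K_4$. The last inequality follows because $t\mapsto (a+t)^{-(n+2s)/2}-(b+t)^{-(n+2s)/2}$ is decreasing in $t$ for $a<b$, applied with $a=|x'-y'|^2$-type quantities and the horizontal distance $|x_1-y_1|\le|x_1-(2\lambda-y_1)|$. Since $x$ minimizes $w$ on $\Gamma_\lambda$, we have $w(x)-w(y)\le 0$, so the first group is nonpositive.

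\emph{Lower bound for the second group.} Since $w(x)<0$, it suffices to bound $\int_{\Gamma_\lambda}(K_2+K_3-K_4)\D y$ from below. First, $K_3-K_4\ge0$, so this integrand is at least $K_2$, and also at least $K_3-K_4$. Second, $\int_{\Gamma_\lambda}K_2\D y\ge c\,x_n^{-2s}$, by integrating over the part of $\Gamma_\lambda$ where $|y-x|$ is comparable to $x_n$, i.e. a half-ball. Third, we need $\int_{\Gamma_\lambda}(K_2+K_3-K_4)\D y\ge c\,d^{-2s}$. If $d\le x_n$ this follows from the $K_2$ bound, since $x_n^{-2s}\le d^{-2s}$. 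If $d>x_n$, we integrate $K_3$ over a ball of radius $\sim d$ near $x$. There $|x-y^\lambda|\sim d$, giving a contribution of order $d^{-2s}$; we absorb $K_4\le K_3$ in the region where $K_3\ge 2K_4$, or else use $K_2$ directly.

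This last comparison is the delicate step and the main obstacle. The cleanest route is to take $y\in B_{d/2}(x)\cap\Gamma_\lambda$ in both cases. Then $|x-y^\lambda|\le 3d$ and $|x-y^*|\le |x-(y^\lambda)^*|$, so $K_2\ge K_4$ and hence $K_2+K_3-K_4\ge K_3\ge (3d)^{-n-2s}$. The region has volume at least $c\,d^n$, because half of the ball lies in $\{y_n>x_n\}\subset\R^n_+$. Combining the three bounds gives $\int_{\Gamma_\lambda}(K_2+K_3-K_4)\D y\ge c\,(d^{-2s}+x_n^{-2s})$. Multiplying by $2c_{n,s}w(x)<0$ yields
\[
(-\Delta)^s w(x)\le c(n,s)\,w(x)\,(d^{-2s}+x_n^{-2s}).
\]
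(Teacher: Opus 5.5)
Your argument is essentially the standard one. The paper gives no proof of this lemma and refers to \cite{Zhuo2022}, and the same folding appears in its proof of Proposition~\ref{0symm}. The steps are: fold onto $\Gamma_\lambda$ with both anti-symmetries; note that $K_1-K_2-K_3+K_4$ is exactly the positive kernel $g_\lambda$ quoted from \cite{Li2010}, so the first-order part is nonpositive at the minimum; then bound the zeroth-order coefficient below by $\int_{\Gamma_\lambda} K_2\gtrsim x_n^{-2s}$ and $\int_{\Gamma_\lambda} K_3\gtrsim d^{-2s}$. Your monotonicity argument in $t\mapsto (a+t)^{-(n+2s)/2}-(b+t)^{-(n+2s)/2}$ is a clean self-contained replacement for the citation of $g_\lambda>0$.

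Two slips should be fixed; neither is fatal.

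\emph{Regrouping coefficient.} Since $w(x)+w(y)=2w(x)-\big(w(x)-w(y)\big)$, the bracket equals
\[
\big(w(x)-w(y)\big)\big(K_1-K_2-K_3+K_4\big)+2w(x)\big(K_2+K_3\big),
\]
not $\ldots+2w(x)(K_2+K_3-K_4)$. Your expression exceeds the true one by $-2w(x)K_4>0$, so your upper bound still holds. With the correct coefficient, the comparison $K_2\ge K_4$ is no longer needed.

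\emph{Case split in the ``Third'' step.} The split is reversed. When $d\le x_n$ one has $d^{-2s}\ge x_n^{-2s}$, so the bound $\int_{\Gamma_\lambda}K_2\ge c\,x_n^{-2s}$ does not yield $c\,d^{-2s}$. It is the case $d\ge x_n$ that the $K_2$ bound covers. Your next paragraph, integrating $K_3\ge (3d)^{-n-2s}$ over $B_{d/2}(x)\cap\{y_n>x_n\}\subset\Gamma_\lambda$, proves the $d^{-2s}$ bound in every case, so simply delete the case split.
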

	
	Finally, we record a useful estimate.
	\begin{lem}\label{Li_observe}
		Let $p>0$. Then $b^p-a^p\simeq_p(a+b)^{p-1}(b-a),~0<a<b$.
	\end{lem}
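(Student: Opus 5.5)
The statement is Lemma~\ref{Li_observe}: for $0<a<b$ and fixed $p>0$, the quantity $b^p-a^p$ is comparable to $(a+b)^{p-1}(b-a)$, with constants depending only on $p$. I will reduce this to a one-variable statement by homogeneity. Both sides are homogeneous of degree $p$ in $(a,b)$, so I set $t=a/b\in(0,1)$ and divide by $b^p$. The claim then becomes
$$1-t^p\simeq_p (1+t)^{p-1}(1-t),\qquad t\in(0,1).$$
Since $(1+t)^{p-1}$ lies between $\min\{1,2^{p-1}\}$ and $\max\{1,2^{p-1}\}$ on $[0,1]$, it suffices to show that
$$g(t):=\frac{1-t^p}{1-t}$$
is bounded above and below by positive constants depending only on $p$ on $(0,1)$.

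The function $g$ is continuous and positive on $[0,1)$, with $g(0)=1$. As $t\to1^-$, L'Hôpital's rule (or the derivative of $t^p$ at $1$) gives $g(t)\to p>0$. So $g$ extends to a continuous positive function on the compact interval $[0,1]$, and it therefore attains a positive minimum and a finite maximum there.

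For explicit constants I will use the mean value theorem instead. It gives $1-t^p=p\,\xi^{p-1}(1-t)$ for some $\xi\in(t,1)$, so $g(t)=p\,\xi^{p-1}$.
\begin{itemize}
\item If $p\ge1$, then $g\le p$, and $g\ge1$ because $t^p\le t$.
\item If $p<1$, then $g\ge p$, and $g\le1$ because $t^p\ge t$.
\end{itemize}
Hence $\min\{1,p\}\le g\le\max\{1,p\}$, and combining this with the bounds on $(1+t)^{p-1}$ yields the lemma.

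No step here is a real obstacle. The only point needing care is the endpoint behavior of $g$ at $t\to1$ and $t\to0$, and the elementary inequalities between $t^p$ and $t$ dispose of both.
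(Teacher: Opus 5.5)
Your proof is correct, but it is organized differently from the paper's. The paper splits into the regimes $b\ge 2a$ and $a<b<2a$. In the far regime it uses $(1-2^{-p})b^p\le b^p-a^p\le b^p$ together with $a+b\simeq b\simeq b-a$. Only in the near-diagonal regime does it invoke the mean value theorem, and there the intermediate point $\xi\in(a,b)$ is automatically comparable to $a+b$.

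You instead normalize by homogeneity to $t=a/b\in(0,1)$ and apply the mean value theorem on the whole range. The one direction in which $\xi^{p-1}$ is uncontrolled, since $\xi$ may be close to $t\approx 0$, is covered by the elementary comparison of $t^p$ with $t$. This gives $\min\{1,p\}\le (1-t^p)/(1-t)\le \max\{1,p\}$ with no case distinction.

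What your route buys is explicit constants:
\[
\min\{1,p\}\min\{1,2^{1-p}\}\le \frac{b^p-a^p}{(a+b)^{p-1}(b-a)}\le \max\{1,p\}\max\{1,2^{1-p}\}.
\]
The paper's split makes more transparent why the weight is $(a+b)^{p-1}$: it is the scale of $\xi$ in the near regime and of $b$ in the far regime. By contrast, a weight like $a^{p-1}$ fails for $p<1$ as $a\to 0$. Either argument suffices for the use of the lemma in Lemma~\ref{min_point_est}.
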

	\begin{proof}
		If $b\geq 2a$, since $a+b\simeq b$ and $(1-2^{-p})b^p\leq b^p-a^p\leq b^p$, we derive
		$$b^p-a^p\simeq b^p\simeq(a+b)^{p-1}(b-a).$$
		Otherwise, if $a<b<2a$, then $a\simeq b\simeq a+b$. By the mean value theorem, there exists $\xi\in(a,b)$ such that $b^p-a^p=p\xi^{p-1}(b-a)$, where $\xi\simeq a+b$.
	\end{proof}
\section{Non-existence of Solutions When $p=1-2s<q\leq\dfrac{n+2s}{n-2s}$}\label{sec_non_D<0}
	
	To prove the non-existence when $p=1-2s<q\leq\dfrac{n+2s}{n-2s}$, we always assume $u,v$ are not identically 0, hence positive in $\R^n_+$ by Lemma~\ref{lower_bound}.
	\begin{prop}\label{depend_n}
		Let $1-2s=p<q\leq\dfrac{n+2s}{n-2s}$. Then $u,v$ depend only on the $x_n$ variable.
	\end{prop}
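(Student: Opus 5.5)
The plan is to run the method of moving planes in the $x_1$-direction, and by rotation in every tangential direction $x_1,\dots,x_{n-1}$, on Kelvin-transformed functions. Symmetry about every tangential plane through every center then forces dependence on $x_n$ alone. Concretely, fix a center $z\in\partial\R^n_+$ (say $z=0$) and set
$$\bar u(x)=|x|^{2s-n}u\big(x/|x|^2\big),\qquad \bar v(x)=|x|^{2s-n}v\big(x/|x|^2\big).$$
These are again anti-symmetric in $x_n$. They satisfy
$$(-\Delta)^s\bar u=|x|^{-\alpha}\bar v^{\,p},\qquad (-\Delta)^s\bar v=|x|^{-\beta}\bar u^{\,q}\quad\text{in }\R^n_+\setminus\{0\},$$
with $\alpha=n+2s-p(n-2s)\ge0$ and $\beta=n+2s-q(n-2s)\ge0$, the latter because $q\le\frac{n+2s}{n-2s}$. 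Lemma~\ref{lower_bound} gives $u,v\ge c_Rx_n$ near the origin. After inversion this yields the lower bounds $\bar u(x),\bar v(x)\ge c\,x_n|x|^{2s-n-2}$ for $|x|$ large. For $\lambda<0$ set $\omega_\lambda=\bar u_\lambda-\bar u$ and $\zeta_\lambda=\bar v_\lambda-\bar v$ on $\Gamma_\lambda$, where $\bar u_\lambda(x)=\bar u(x^\lambda)$. Both are anti-symmetric under $x\mapsto x^*$ and under $x\mapsto x^\lambda$, which is exactly the setting of Lemma~\ref{maximum_principle}.

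\textbf{Step 1 (start the plane at $-\infty$).} Suppose $\omega_\lambda$ or $\zeta_\lambda$ has a negative infimum on $\Gamma_\lambda$. Since both tend to $0$ at infinity, the infimum is attained. Say $\omega_\lambda$ attains a negative minimum at $x$. Lemma~\ref{maximum_principle} and the equation give
$$c\,\omega_\lambda(x)(d^{-2s}+x_n^{-2s})\ge(-\Delta)^s\omega_\lambda(x)=|x^\lambda|^{-\alpha}\bar v_\lambda^p-|x|^{-\alpha}\bar v^p\ge|x|^{-\alpha}(\bar v_\lambda^p-\bar v^p),$$
using $|x^\lambda|<|x|$ for $x\in\Gamma_\lambda$. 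The right side is $\ge -C|x|^{-\alpha}(\bar v_\lambda+\bar v)^{p-1}|\zeta_\lambda(x)|$ by Lemma~\ref{Li_observe}. Because $p<1$, the factor $(\bar v_\lambda+\bar v)^{p-1}$ must be controlled from above, and this is done with the lower bound $\bar v\ge c x_n|x|^{2s-n-2}$. We then get
$$|\omega_\lambda(x)|\,x_n^{-2s}\lesssim |x|^{-\alpha}\big(x_n|x|^{2s-n-2}\big)^{p-1}|\zeta_\lambda(x)|.$$
The same argument at a minimum point $\tilde x$ of $\zeta_\lambda$ gives $|\zeta_\lambda(\tilde x)|\tilde x_n^{-2s}\lesssim|\tilde x|^{-\beta}(\cdots)^{q-1}|\omega_\lambda(\tilde x)|$.

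In the non-borderline case one chains the two inequalities to localize the minimum point in a bounded region, which is impossible for $\lambda$ very negative. Here $p=1-2s$, so the power $x_n^{1-p}=x_n^{2s}$ exactly cancels $x_n^{-2s}$, and the powers of $|x|$ on both sides coincide. This is the main obstacle.

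\textbf{Step 2 (localization in the borderline case).} Following the idea announced in the introduction, I will compare $\omega_\lambda$ with $\epsilon\zeta_\lambda$ for a small fixed $\epsilon>0$. Consider $m=\min\{\inf\omega_\lambda,\ \epsilon\inf\zeta_\lambda\}<0$ and take the point where it is attained. If it is attained by $\omega_\lambda$, then $|\zeta_\lambda|\le\epsilon^{-1}|\omega_\lambda|$ there, but that alone does not close. Instead I take the opposite normalization. At a minimum of $\zeta_\lambda$ with $\epsilon|\zeta_\lambda|\le|\omega_\lambda|$, the $q$-inequality carries the strictly better decay: since $q>1-2s$, the exponent of $|x|$ no longer balances. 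This gives $|\tilde x|\le R_0$ independent of $\lambda$, which is impossible for $\lambda<-R_0$. At a minimum of $\omega_\lambda$ with $|\omega_\lambda|\le\epsilon|\zeta_\lambda|$-type control, the borderline inequality reads $|\omega_\lambda|\le C_0|\zeta_\lambda|$ with a constant $C_0$ independent of $\lambda$. Choosing $\epsilon<1/C_0$ and chaining with the $\zeta$-inequality again gives the strict gain from $q>1-2s$ and localizes the point. A singularity at the origin does not arise, since $0\notin\overline{\Gamma_\lambda}$ for $\lambda<0$. So $\omega_\lambda,\zeta_\lambda\ge0$ for all $\lambda$ sufficiently negative.

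\textbf{Step 3 (move to $\lambda=0$ and conclude).} Let $\lambda_0=\sup\{\lambda\le0:\omega_\mu,\zeta_\mu\ge0\ \forall\mu\le\lambda\}$. If $\lambda_0<0$, the strong maximum principle (from \eqref{def_in_L2s+1}) gives $\omega_{\lambda_0},\zeta_{\lambda_0}>0$ unless both vanish identically. The vanishing case is impossible, because the weight $|x|^{-\alpha}$ or $|x|^{-\beta}$ is not symmetric about $T_{\lambda_0}$ whenever $\alpha$ or $\beta$ is positive, and $\alpha>0$ always since $p<\frac{n+2s}{n-2s}$. Then the Step 2 localization shows that negative minima for $\lambda$ slightly above $\lambda_0$ lie in a compact set. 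There they would converge to a point where $\omega_{\lambda_0}$ or $\zeta_{\lambda_0}$ vanishes, or to $T_{\lambda_0}$ or $\partial\R^n_+$, where the $d^{-2s},x_n^{-2s}$ terms dominate; either way we get a contradiction. Hence $\lambda_0=0$, so $\bar u(x_1,\dots)\le\bar u(-x_1,\dots)$ for $x_1<0$. Moving from $+\infty$ gives the reverse inequality, so $\bar u,\bar v$ are symmetric in $x_1$ about $0$. Undoing the Kelvin transform, $u,v$ are symmetric about $\{x_1=z_1\}$ for every center $z\in\partial\R^n_+$, and likewise in each tangential direction. Therefore $u$ and $v$ are independent of $x'$.
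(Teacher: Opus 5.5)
Your overall route is the paper's: a Kelvin transform at a boundary point, moving planes in $x_1$ using Lemma~\ref{maximum_principle}, an $\epsilon$-weighted comparison to get past $p=1-2s$, and undoing the transform. The gap is in Step~2, which is exactly where the borderline case is decided, and as written it does not work.

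In your notation $\omega_\lambda=\bar u_\lambda-\bar u$ obeys the borderline $p$-equation and $\zeta_\lambda=\bar v_\lambda-\bar v$ obeys the $q$-equation. Take the ``opposite normalization'' $m=\inf_{\Gamma_\lambda}\min\{\epsilon\omega_\lambda,\zeta_\lambda\}$. A minimum attained by $\zeta_\lambda$ yields $\epsilon|\omega_\lambda|\le|\zeta_\lambda|$ at that point. A minimum attained by $\epsilon\omega_\lambda$ yields $|\zeta_\lambda|\le\epsilon|\omega_\lambda|$. You state the reverse relation in both cases, and with your relations nothing closes:
\begin{itemize}
\item $\epsilon|\zeta_\lambda|\le|\omega_\lambda|$ is a lower bound on $|\omega_\lambda|$, so it cannot be inserted into $|\zeta_\lambda|(d^{-2s}+x_n^{-2s})\lesssim|x|^{-\beta}\bar u^{q-1}|\omega_\lambda|$.
\item $|\omega_\lambda|\le\epsilon|\zeta_\lambda|$ and $|\omega_\lambda|\le C_0|\zeta_\lambda|$ are both upper bounds on $|\omega_\lambda|$, so ``choosing $\epsilon<1/C_0$'' achieves nothing.
\item You also cannot ``chain with the $\zeta$-inequality'' at a minimum point of $\omega_\lambda$. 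Lemma~\ref{maximum_principle} controls $(-\Delta)^s\zeta_\lambda$ only at a point where $\zeta_\lambda$ itself attains its negative infimum.
\end{itemize}

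The mechanism that works (Lemma~\ref{min_point_est}) is that the $\omega_\lambda$-case never occurs. There the $p$-inequality forces $\zeta_\lambda(\tilde x)<0$ and gives $c|\omega_\lambda|(\tilde d^{-2s}+\tilde x_n^{-2s})\le C\tilde x_n^{-2s}|\zeta_\lambda|\le C\epsilon\,\tilde x_n^{-2s}|\omega_\lambda|$. This is impossible once $C\epsilon<c$, wherever $\tilde x$ lies, so no localization is needed in this case. Hence the minimum is attained by $\zeta_\lambda$, and at that same point $|\omega_\lambda|\le\epsilon^{-1}|\zeta_\lambda|$ closes the $q$-inequality: $\tilde d^{-2s}+\tilde x_n^{-2s}\le C\epsilon^{-1}|\tilde x|^{-\beta}\bar u(\tilde x)^{q-1}$. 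Since $q>1-2s$, this bounds $|\tilde x|$, and in Step~3 it keeps $\tilde x$ away from $T_\lambda$ and $\partial\R^n_+$.

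Alternatively, a genuine two-point chain works without $\epsilon$. If either infimum is negative, both are. Then $|\inf\omega_\lambda|\le C_0|\inf\zeta_\lambda|$ holds at the $\omega_\lambda$-minimizer, and $|\inf\zeta_\lambda|\le K(\hat x)|\inf\omega_\lambda|$ holds at the $\zeta_\lambda$-minimizer $\hat x$, with $K(\hat x)\to0$ as $|\hat x|\to\infty$. This gives $C_0K(\hat x)\ge1$, but then $\hat x$, not the $\omega_\lambda$-minimizer, is the point Step~3 must track.

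A smaller point: attainment of the infimum also needs $\liminf\ge0$ at $T_\lambda$, at $\partial\R^n_+$, and at $0^\lambda=(2\lambda,0,\dots,0)\in\partial\Gamma_\lambda$. The reflected transforms $\bar u_\lambda,\bar v_\lambda$ are singular at $0^\lambda$, so this point, not the fact that $0\notin\overline{\Gamma_\lambda}$, is what needs a word. It is harmless: $\bar u_\lambda,\bar v_\lambda\ge0$ near it, while $\bar u,\bar v$ are continuous there and vanish on $\partial\R^n_+$.
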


	We apply the Kelvin transform to $u$ and $v$:
	$$\phi(x)=\dfrac{1}{|x|^{n-2s}}u\big(\dfrac{x}{|x|^2}\big),~\psi(x)=\dfrac{1}{|x|^{n-2s}}v\big(\dfrac{x}{|x|^2}\big).$$
	Then $\phi(x)=o(|x|^{2s-n}),\,\psi(x)=o(|x|^{2s-n})$ as $x\to\infty$ and
	\begin{equation}
		(-\Delta)^s\phi(x)=\frac{\psi(x)^p}{|x|^\sigma},\,(-\Delta)^s\psi(x)=\frac{\phi(x)^q}{|x|^\tau},\,\forall x\in\R^n_+,
	\end{equation}
	where $\sigma=(n+2s)-(n-2s)p=2s\cdot(n+2-2s)>0,\,\tau=(n+2s)-(n-2s)q\geq0$.
	\begin{prop}\label{0symm}
		$\phi(x)=\phi(x^0),\,\psi(x)=\psi(x^0).$
	\end{prop}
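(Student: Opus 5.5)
We prove the symmetry in the $x_1$ direction by the method of moving planes applied to the Kelvin transforms $\phi,\psi$. By rotation invariance in $x'$, the same argument then works in every tangential direction through the origin. For $\lambda<0$ set
$$\omega_\lambda(x)=\phi(x^\lambda)-\phi(x),\qquad \zeta_\lambda(x)=\psi(x^\lambda)-\psi(x),\qquad x\in\Gamma_\lambda.$$
Both functions are anti-symmetric with respect to $T_\lambda$ and with respect to $\partial\R^n_+$. The Kelvin transform is singular only at the origin, which lies outside $\Gamma_\lambda$ when $\lambda<0$, so Lemma~\ref{maximum_principle} applies. For $x\in\Gamma_\lambda$ we have $|x^\lambda|<|x|$, so $|x^\lambda|^{-\sigma}\geq|x|^{-\sigma}$. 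Together with Lemma~\ref{Li_observe}, this gives
$$(-\Delta)^s\omega_\lambda(x)\geq\frac{\psi(x^\lambda)^p-\psi(x)^p}{|x|^\sigma}\gtrsim\frac{(\psi(x^\lambda)+\psi(x))^{p-1}\zeta_\lambda(x)}{|x|^\sigma}$$
at points where $\zeta_\lambda<0$. The analogous inequality holds for $\zeta_\lambda$ with $\phi^q/|x|^\tau$.

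\emph{Step 1: start the plane at $\lambda$ very negative.} Suppose $\omega_\lambda$ or $\zeta_\lambda$ has a negative infimum on $\Gamma_\lambda$. Since both functions decay at infinity and vanish on $T_\lambda$ and on $\partial\R^n_+$, the infimum is attained at some $\bar x$. If $\omega_\lambda(\bar x)<0$, Lemma~\ref{maximum_principle} yields
$$c\,\omega_\lambda(\bar x)\bigl(d^{-2s}+\bar x_n^{-2s}\bigr)\geq(-\Delta)^s\omega_\lambda(\bar x).$$
The right-hand side is bounded below by the $\zeta_\lambda$ term above, with $\zeta_\lambda$ replaced by its negative minimum. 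To bound $(\psi(x^\lambda)+\psi(x))^{p-1}$ we need a lower bound on $\psi$: Lemma~\ref{lower_bound}, pulled back through the Kelvin transform, gives $\psi(x)\gtrsim x_n|x|^{-n-2+2s}$ for large $|x|$, and the same holds for $\phi$.

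Here $p-1=-2s$ and $\sigma=2s(n+2-2s)$. The factor $\psi^{p-1}|x|^{-\sigma}$ therefore behaves like $x_n^{-2s}|x|^{2s(n+2-2s)-\sigma}=x_n^{-2s}$, which is exactly the order of the term $\bar x_n^{-2s}$ coming from the maximum principle. This is the borderline difficulty described in the introduction. I would resolve it by working with the weighted pair $\omega_\lambda$ and $\epsilon\zeta_\lambda$. Take $\bar x$ to be the minimum point of $\min\{\omega_\lambda,\epsilon\zeta_\lambda\}$. If it is a minimum of $\omega_\lambda$, then $\epsilon|\zeta_\lambda|\leq|\omega_\lambda|$ there, and the coupling term is at most $C\epsilon^{-1}$ times the borderline quantity. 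That is the wrong direction, so instead I compare $\epsilon\omega_\lambda$ with $\zeta_\lambda$ in the $\phi$-equation, so that the coupling term acquires a factor $\epsilon$. This term can then be absorbed into $c\,\bar x_n^{-2s}|\omega_\lambda|$ for $\epsilon$ small.

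For the other equation the exponent $q>1-2s$ is strictly supercritical relative to the borderline. The coupling factor $\phi^{q-1}|x|^{-\tau}$ then decays strictly faster than $\bar x_n^{-2s}$ as $|\bar x|\to\infty$; here I use an upper bound $\phi\lesssim|x|^{2s-n}$ and, if $q<1$, the lower bound of Lemma~\ref{lower_bound}. This localizes $\bar x$ in a bounded region. For $\lambda$ sufficiently negative, $\Gamma_\lambda$ misses that region, which is a contradiction. Hence $\omega_\lambda,\zeta_\lambda\geq0$ for $\lambda\ll0$.

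\emph{Step 2: move the plane.} Let $\lambda_0=\sup\{\lambda\leq0:\omega_\mu,\zeta_\mu\geq0\text{ in }\Gamma_\mu\ \forall\mu\leq\lambda\}$. If $\lambda_0<0$, the strong maximum principle for anti-symmetric functions gives $\omega_{\lambda_0},\zeta_{\lambda_0}>0$ in $\Gamma_{\lambda_0}$, since they cannot vanish identically because of the singularity structure at $0^{\lambda_0}$. Take $\lambda_k\downarrow\lambda_0$ with minimum points $x^k$. The localization from Step 1 keeps $x^k$ bounded. Moreover, $d^{-2s}$ blows up near $T_\lambda$ and $x_n^{-2s}$ blows up near $\partial\R^n_+$, so $x^k$ also stays away from both. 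A limit point then gives a nonpositive minimum of $\omega_{\lambda_0}$ in the interior, a contradiction. Thus $\lambda_0=0$, i.e. $\phi(x^0)\geq\phi(x)$ for $x_1<0$. Moving the plane from $+\infty$ in the same way gives the reverse inequality, hence $\phi(x)=\phi(x^0)$ and likewise $\psi(x)=\psi(x^0)$.

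The main obstacle is the localization in Step 1, where the $\epsilon$-weighting must be set up so that both coupled inequalities close simultaneously despite the exact borderline scaling in the $p$-equation.
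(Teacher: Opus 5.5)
Your proposal is correct and is essentially the paper's proof, with your labels for the two differences swapped. You put the weight $\epsilon$ on the $\phi$-difference exactly as in the paper's $\min\{\epsilon\zeta_\lambda,\omega_\lambda\}$, so the borderline coupling $\psi^{p-1}|x|^{-\sigma}\sim x_n^{-2s}$ picks up a factor $\epsilon$ and forces the minimum onto the $\psi$-difference; that equation, non-borderline since $q>1-2s$, localizes the minimizer and in the continuation step keeps it away from $T_\lambda$ and $\partial\R^n_+$.

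One step needs repair: strict positivity at $\lambda_0$ should come, as in the paper, from $|x^{\lambda_0}|^{-\sigma}>|x|^{-\sigma}$ with $\sigma>0$ (and, when $\tau=0$, by reducing a zero of the $\psi$-difference to a zero of the $\phi$-difference). It should not rest on a singularity at $0^{\lambda_0}$, because that point lies on $\partial\R^n_+$ and $\phi$ is not known to blow up at the origin.
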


	Proposition~\ref{depend_n} is an easy consequence of Proposition~\ref{0symm}. Indeed, assuming Proposition~\ref{0symm}, choose any two points $x^1,x^2\in\R^n_+$ at the same height. After a rigid transformation parallel to $\partial\R^n_+$ we may assume $x^1=(x_1,0,\cdots,0,x_n),x^2=(-x_1,0,\cdots,0,x_n)$. Proposition \ref{0symm} claims that $\phi(x^1)=\phi(x^2),\psi(x^1)=\psi(x^2)$, which immediately implies $u(x^1)=u(x^2),v(x^1)=v(x^2)$.
	
	To prove Proposition~\ref{0symm}, we introduce 
	$$\phi_\lambda(x):=\phi(x^\lambda),~\psi_\lambda(x):=\psi(x^\lambda)$$
	and
	$$\zeta_\lambda=\phi_\lambda-\phi,~\omega_\lambda=\psi_\lambda-\psi$$
	in $\Gamma_\lambda$ for $\lambda<0$. Then $\forall x\in\Gamma_\lambda$:
	\begin{equation}
		(-\Delta)^s\zeta_\lambda(x)=\frac{\psi(x^\lambda)^p}{|x^\lambda|^\sigma}-\frac{\psi(x)^p}{|x|^\sigma};
		(-\Delta)^s\omega_\lambda(x)=\frac{\phi(x^\lambda)^q}{|x^\lambda|^\tau}-\frac{\phi(x)^q}{|x|^\tau}.
	\end{equation}
	
	We first establish a uniform bound for a negative minimizer:
	\begin{lem}\label{min_point_est}
		Given $\lambda_0<0$. Then there exist $\epsilon>0,\,R<\infty$ such that the following holds.
		
		If $\lambda<\lambda_0$ and
		$$m:=\inf\limits_{x\in\Gamma_\lambda}\min\{\epsilon\zeta_\lambda(x),\omega_\lambda(x)\}<0,$$
		then there exists $\tilde x\in\Gamma_\lambda$ such that $m=\omega_\lambda(\tilde x)$. Moreover, $|\tilde x|\leq R$.
	\end{lem}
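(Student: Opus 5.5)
We argue by contradiction against the possibility that the minimum is realized by $\epsilon\zeta_\lambda$, and then bound the location of the minimum of $\omega_\lambda$ by comparing decay rates. First, $\zeta_\lambda,\omega_\lambda$ vanish on $\{x_n=0\}$ and on $T_\lambda$. They also tend to $0$ as $|x|\to\infty$, because $\phi,\psi=o(|x|^{2s-n})$. For $x\in\Gamma_\lambda$ with $|x|$ large we have $|x^\lambda|\le|x|$, so $\phi_\lambda,\psi_\lambda$ tend to $0$ there as well. Hence a negative infimum $m$ is attained at some $\bar x\in\Gamma_\lambda$, either by $\epsilon\zeta_\lambda$ or by $\omega_\lambda$.

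We also record a global lower bound. Applying Lemma~\ref{lower_bound} with the integral restricted to $B_1(2e_n)$ gives $v(z)\ge c\,z_n(1+|z|)^{2s-n-2}$ on all of $\R^n_+$, since the kernel behaves like $z_ny_n|z|^{2s-n-2}$ for large $z$. The Kelvin transform of this bound gives
$$\psi(x)\ge c\,x_n(1+|x|)^{2s-n-2},\qquad x\in\R^n_+,$$
and similarly for $\phi$. Throughout we use $x_1<\lambda<\lambda_0<0$, so that $|x|\ge|\lambda_0|$ and $|x^\lambda|<|x|$ on $\Gamma_\lambda$.

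\emph{Step 1: the minimum is attained by $\omega_\lambda$.} Suppose $m=\epsilon\zeta_\lambda(\bar x)<\omega_\lambda(\bar x)$. Then $\bar x$ minimizes $\zeta_\lambda$ over $\Gamma_\lambda$, and Lemma~\ref{maximum_principle} gives
$$(-\Delta)^s\zeta_\lambda(\bar x)\le c\,\zeta_\lambda(\bar x)\,\bar x_n^{-2s}<0.$$
On the other side, since $|x^\lambda|<|x|$, the right-hand side of the equation for $\zeta_\lambda$ is at least $(\psi_\lambda^p-\psi^p)(\bar x)/|\bar x|^\sigma$. If $\omega_\lambda(\bar x)\ge0$ this is already a contradiction. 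Otherwise Lemma~\ref{Li_observe} together with $p<1$ gives
$$\psi^p-\psi_\lambda^p\lesssim \psi_\lambda^{p-1}|\omega_\lambda|\le\psi_\lambda^{-2s}\,\epsilon|\zeta_\lambda|.$$
Inserting $\psi_\lambda(\bar x)\ge c\,\bar x_n(1+|\bar x^\lambda|)^{2s-n-2}$, we obtain
$$c\,\bar x_n^{-2s}\le C\epsilon\,\bar x_n^{-2s}\,(1+|\bar x^\lambda|)^{2s(n+2-2s)}\,|\bar x|^{-\sigma}.$$
Here the borderline appears: the powers of $\bar x_n$ coincide, and $\sigma=2s(n+2-2s)$ exactly matches the growth coming from $\psi_\lambda^{-2s}$. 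Since $|\bar x^\lambda|<|\bar x|$ and $|\bar x|\ge|\lambda_0|$, the factor $(1+|\bar x^\lambda|)^{\sigma}|\bar x|^{-\sigma}$ is bounded by a constant depending on $\lambda_0$. Fixing $\epsilon$ small then gives a contradiction, so $m=\omega_\lambda(\tilde x)$ for some $\tilde x\in\Gamma_\lambda$.

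\emph{Step 2: localization.} Now $\tilde x$ minimizes $\omega_\lambda$, and $\zeta_\lambda\ge m/\epsilon$ on $\Gamma_\lambda$. Lemma~\ref{maximum_principle} gives $(-\Delta)^s\omega_\lambda(\tilde x)\le c\,\omega_\lambda(\tilde x)\tilde x_n^{-2s}$. Since $\tau\ge0$, the right-hand side of the equation for $\omega_\lambda$ is at least $(\phi_\lambda^q-\phi^q)/|\tilde x|^\tau$. If $\zeta_\lambda(\tilde x)\ge0$ we get an immediate contradiction. Otherwise $|\zeta_\lambda|\le\epsilon^{-1}|\omega_\lambda|$, and Lemma~\ref{Li_observe} gives
$$c\,\tilde x_n^{-2s}\le C\epsilon^{-1}|\tilde x|^{-\tau}(\phi+\phi_\lambda)^{q-1}(\tilde x).$$
We split according to $q$.

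If $q<1$, we use $(\phi+\phi_\lambda)^{q-1}\le\phi_\lambda^{q-1}\le C\,\tilde x_n^{q-1}|\tilde x|^{(n+2-2s)(1-q)}$. Since $q-1+2s>0$ and $\tilde x_n\le|\tilde x|$, the inequality becomes $1\le C\epsilon^{-1}|\tilde x|^{1-2s-q}$.

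If $q\ge1$, we use $\phi_\lambda<\phi\le C|\tilde x|^{2s-n}$; this uses the continuity of $u$ on $\R^n$ and $|\tilde x|\ge|\lambda_0|$. Together with $\tilde x_n\le|\tilde x|$, the inequality becomes $1\le C\epsilon^{-1}|\tilde x|^{-2s}$.

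In both cases the exponent of $|\tilde x|$ is negative, precisely because $q>1-2s$. This forces $|\tilde x|\le R$ with $R=R(\epsilon,\lambda_0)$.

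The main obstacle is Step 1. In the borderline case $p=1-2s$ the lower bound on $\psi_\lambda$ produces exactly the same homogeneity as the maximum-principle term. The only gain available is the small constant $\epsilon$, and it has to be fixed before $R$ is chosen in Step 2, where it costs a factor $\epsilon^{-1}$.
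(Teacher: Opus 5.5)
Your proof is correct and follows essentially the same route as the paper. You first rule out the case where the minimum is realized by $\epsilon\zeta_\lambda$, using Lemma~\ref{maximum_principle}, Lemma~\ref{Li_observe} and the Kelvin-transformed lower bound, where $\sigma=2s(n+2-2s)$ makes the powers cancel so only the small $\epsilon$ helps; you then localize the minimum of $\omega_\lambda$ by splitting into $q<1$ and $q\ge1$. Two small repairs are needed in your attainment step. First, the decay of $\phi_\lambda,\psi_\lambda$ at infinity follows from $|x^\lambda|=|x-0^\lambda|\ge|x|-2|\lambda|$, not from $|x^\lambda|\le|x|$. Second, $\zeta_\lambda,\omega_\lambda$ do not vanish near the boundary point $0^\lambda$, where $\phi_\lambda,\psi_\lambda$ may blow up; they do so only positively, so $\liminf\ge0$ still holds there, as the paper notes.
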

	\begin{proof}
		Let $\{x^k\}\subset\Gamma_\lambda$ be a minimizing sequence, i.e. $\lim\limits_{k\to\infty}\min\{\epsilon\zeta_\lambda(x^k),\omega_\lambda(x^k)\}=m$. Since
		\begin{equation}
			\liminf_{\Gamma_\lambda\ni x\to y}\omega_\lambda(x)\geq 0,\,\liminf_{\Gamma_\lambda\ni x\to y}\zeta_\lambda(x)\geq 0
		\end{equation}
		where $y\in\{\infty\}\cup\R^{n-1}\cup T_\lambda$ (this fact holds even if $y=0^\lambda$), the sequence $\{x^k\}$ is away from these boundary points. Hence there exists a subsequence converging to $\tilde x\in\Gamma_\lambda$, which satisfies $\min\{\epsilon\zeta_\lambda(\tilde x),\omega_\lambda(\tilde x)\}=m$ by continuity.
		
		Suppose, to the contrary, that $m=\epsilon\zeta_\lambda(\tilde x)\leq\omega_\lambda(\tilde x)$. By Lemma~\ref{maximum_principle}:
		\begin{equation}
			\frac{\psi(\tilde x^\lambda)^p-\psi(\tilde x)^p}{|\tilde x|^\sigma}\leq(-\Delta)^s\zeta_\lambda(\tilde x)\leq c(\tilde d^{-2s}+\tilde x_n^{-2s})\zeta_\lambda(\tilde x)<0,
		\end{equation}
		hence $\psi(\tilde x^\lambda)<\psi(\tilde x)$. By Lemma~\ref{Li_observe}, the left-hand side is comparable to $\big(\psi(\tilde x^\lambda)+\psi(\tilde x)\big)^{p-1}\dfrac{\omega_\lambda(\tilde x)}{|\tilde x|^\sigma}$. Since $p<1$ and $\dfrac{\omega_\lambda(\tilde x)}{\zeta_\lambda(\tilde x)}<\epsilon$, we apply Lemma~\ref{lower_bound} to derive:
		\begin{equation}
			c(\tilde d^{-2s}+\tilde x_n^{-2s})\leq\frac{C\epsilon}{|\tilde x|^\sigma}\big(\psi(\tilde x^\lambda)+\psi(\tilde x) \big)^{p-1}\leq\frac{C\epsilon}{|\tilde x|^\sigma}\psi(\tilde x)^{p-1}\leq\frac{c_0\epsilon}{|\tilde x|^\sigma}\left(\frac{\tilde x_n}{|\tilde x|^{n+2-2s}}\right)^{p-1}.
		\end{equation}
		Here $c_0$ is a constant depending on $\lambda_0$. By $(n+2-2s)(p-1)=-\sigma$, we derive $c\tilde x_n^{-2s}\leq c_0\epsilon\tilde x_n^{-2s}$, which is impossible if we choose $2c_0\epsilon=c$.
		
		Therefore $m=\omega_\lambda(\tilde x)<\epsilon\zeta_\lambda(\tilde x)$. We apply Lemma~\ref{maximum_principle} to $\omega_\lambda$:
		\begin{equation}
			\frac{\phi(\tilde x^\lambda)^q-\phi(\tilde x)^q}{|\tilde x|^\tau}\leq(-\Delta)^s\omega_\lambda(\tilde x)\leq c(\tilde d^{-2s}+\tilde x_n^{-2s})\omega_\lambda(\tilde x)<0.
		\end{equation}
		Hence $\phi(\tilde x^\lambda)<\phi(\tilde x)$, and by Lemma~\ref{Li_observe},
		$$\tilde d^{-2s}+\tilde x_n^{-2s}\leq C\dfrac{\phi(\tilde x)^{q-1}}{|\tilde x|^\tau}.$$
		We distinguish the following two cases:
		\begin{itemize}
			\item $q<1$. Similarly,
			$$\phi(\tilde x)^{q-1}\leq c_0\left(\dfrac{\tilde x_n}{|\tilde x|^{n+2-2s}}\right)^{q-1}\leq c_0\tilde x_n^{q-1}|\tilde x|^{(n+2-2s)(1-q)}.$$
			Hence
			$$\tilde x_n^{-2s}\leq\dfrac{Cc_0\tilde x_n^{q-1}}{|\tilde x|^{2(2s+q-1)}},$$
			which implies that $|\tilde x|^{2(2s+q-1)}\leq Cc_0\tilde x_n^{2s+q-1}$. Then $|\tilde x|\leq(Cc_0)^{\tfrac{1}{2s+q-1}}$.
			\item $q\geq1$. In this case $\phi(\tilde x)^{q-1}\leq C|\tilde x|^{(n-2s)(1-q)}$, hence
			$$\tilde x_n^{-2s}\leq\frac{C}{|\tilde x|^{\tau+(n-2s)(q-1)}}=\frac{Cc_0}{|\tilde x|^{4s}}.$$
			Thus $|\tilde x|\leq(Cc_0)^{\tfrac{1}{2s}}$.
		\end{itemize}
	\end{proof}

	\renewcommand{\proofname}{Proof of Proposition \ref{0symm}}
	\begin{proof}
		The above lemma implies
		\begin{equation}\label{zetaomega>0}
			\zeta_\lambda(x)>0,\omega_\lambda(x)>0,~\forall x\in\Gamma_\lambda
		\end{equation}
		if $\lambda<-R$. We move the plane $T_\lambda$ to the right (i.e. increase $\lambda$) as long as \eqref{zetaomega>0} holds. If the plane stops at some $\lambda_0$, i.e. 
		\begin{equation}
			\lambda_0:=\sup\big\{\mu<0|\eqref{zetaomega>0},\,\forall\lambda\leq\mu\big\}.
		\end{equation}
		We claim $\lambda_0=0$. Suppose, to the contrary, that $\lambda_0<0$. By continuity $\zeta_{\lambda_0}\geq0,\,\omega_{\lambda_0}\geq0$; the inequalities are in fact strict:
		$$\zeta_{\lambda_0}(x)>0,\,\omega_{\lambda_0}>0(x),~x\in\Gamma_{\lambda_0}.$$
		Otherwise, if there exists $x'\in\Gamma_{\lambda_0}$ such that $\zeta_{\lambda_0}(x')=0$, then
		\begin{equation}
			\begin{aligned}
				(-\Delta)^s\zeta_{\lambda_0}(x')=&c_{n,s}\int_{\R^n_+}\big(|x'-y|^{-(n+2s)}-|x'-y^*|^{-(n+2s)}\big) \big(\zeta_{\lambda_0}(x')-\zeta_{\lambda_0}(y)\big)\D y\\
				=&-c_{n,s}\int_{\Gamma_{\lambda_0}}\zeta_{\lambda_0}(y)g_{\lambda_0}(x',y)\D y\leq 0,
			\end{aligned}
		\end{equation}
		where
		$$g_{\lambda}(x,y)=|x-y|^{-n-2s}-|x-y^\lambda|^{-n-2s}+|x-y^{*\lambda}|^{-n-2s}-|x-y^*|^{-n-2s}>0,\,x,y\in\Gamma_\lambda.$$
		(See \cite{Li2010} for a proof of the fact $g_\lambda>0$). However, $$(-\Delta)^s\zeta_{\lambda_0}(x')\geq\dfrac{\psi(x'^{\lambda_0})^p}{|x'^{\lambda_0}|^\sigma}-\dfrac{\psi(x')^p}{|x'|^\sigma}>0,$$
		a contradiction. If $\omega_{\lambda_0}(x')=0$, a similar argument shows that $$0\leq\dfrac{\phi(x'^{\lambda_0})^q}{|x'^{\lambda_0}|^\tau}-\dfrac{\phi(x')^q}{|x'|^\tau}\leq(-\Delta)^s\omega_{\lambda_0}(x')\leq0,$$
		which is impossible if $\tau>0$. If $\tau=0$, we must have $\zeta_{\lambda_0}(x')=0$, which is false as shown.
		
		By the definition of $\lambda_0$, (in the following, we assume without loss of generality that $2\delta<|\lambda_0|$)
		\begin{equation}\label{futher_right_contradict}
			\forall\delta>0,\,\exists\lambda\in(\lambda_0,\lambda_0+\delta)\,\text{ s.t. }\inf\limits_{x\in\Gamma_\lambda}\min\{\epsilon\zeta_\lambda(x),\omega_\lambda(x)\}<0
		\end{equation}
		for $\epsilon>0$ as in Lemma~\ref{min_point_est}. Any minimizing sequence is bounded and away from the singularity $0^\lambda$, and hence has a subsequence converging to some limit $\bar x\in\Gamma_\lambda\setminus\{0^\lambda\}$. Hence the infimum is attained by $\omega_\lambda(\bar x)$.
		\begin{claim}\label{x1<<lambda0}
			$\exists\delta_0>0,l>0$ such that $\forall\lambda\in(\lambda_0,\lambda_0+\delta_0),~\bar x_1\leq\lambda_0-l$.
		\end{claim}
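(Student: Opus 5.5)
The plan is a direct quantitative bound. I will take the maximum principle inequality at the minimizer $\bar x$ from the proof of Lemma~\ref{min_point_est} and show that its right-hand side is bounded uniformly in $\lambda\in(\lambda_0,\lambda_0+\delta_0)$. That forces $\bar d=\lambda-\bar x_1$ to be bounded below, and shrinking $\delta_0$ then gives $\bar x_1\le\lambda_0-l$.

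\emph{Step 1: the inequality at $\bar x$.} By \eqref{futher_right_contradict} and the discussion after it, the negative infimum of $\min\{\epsilon\zeta_\lambda,\omega_\lambda\}$ is attained at $\bar x\in\Gamma_\lambda$ by $\omega_\lambda$. Exactly as in Lemma~\ref{min_point_est}, Lemma~\ref{maximum_principle} and Lemma~\ref{Li_observe} give
\[
\bar d^{-2s}+\bar x_n^{-2s}\le C\,\frac{\phi(\bar x)^{q-1}}{|\bar x|^{\tau}},\qquad \bar d=\lambda-\bar x_1 .
\]
The same lemma gives $|\bar x|\le R$. Moreover, since $\lambda<\lambda_0+\delta<\lambda_0/2<0$, we have $|\bar x|\ge|\bar x_1|\ge|\lambda|\ge|\lambda_0|/2$. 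So $\bar x$ lies in the region $\{|\lambda_0|/2\le|x|\le R\}\cap\R^n_+$, uniformly in $\lambda$, and the factor $|\bar x|^{-\tau}$ is uniformly bounded because $\tau\ge0$.

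\emph{Step 2: bounding $\phi(\bar x)^{q-1}$.} I split into the same two cases as before.

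If $q<1$, I use the lower bound $\phi(x)\ge c_0\,x_n|x|^{-(n+2-2s)}$. This comes from Lemma~\ref{lower_bound} applied to $u$ on the bounded set $\{x/|x|^2:|\lambda_0|/2\le|x|\le R\}$, which is contained in some $B_{R'}^+$. It yields $\phi(\bar x)^{q-1}\le C\bar x_n^{q-1}$. The $\bar x_n$-part of the inequality then reads $\bar x_n^{-2s}\le C\bar x_n^{q-1}$. Since $2s+q-1>0$ (here $q>p=1-2s$), this gives $\bar x_n\ge c_1>0$, and hence $\bar d^{-2s}\le C c_1^{q-1}$.

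If $q\ge1$, I use an upper bound on $\phi$ instead. For $|\lambda_0|/2\le|x|\le R$, the point $x/|x|^2$ ranges over a bounded set on which $u$ is continuous, so $\phi\le C$ there. This gives $\bar d^{-2s}\le C$ directly.

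\emph{Step 3: conclusion.} In both cases $\bar d\ge d_0>0$, with $d_0$ independent of $\lambda\in(\lambda_0,\lambda_0+\delta)$. Choosing $\delta_0\le d_0/2$ (and $2\delta_0<|\lambda_0|$), we get $\bar x_1=\lambda-\bar d\le\lambda_0+\delta_0-d_0\le\lambda_0-d_0/2$, so the claim holds with $l=d_0/2$.

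The main obstacle is the uniformity of the constants. The lower bound of Lemma~\ref{lower_bound} is only local, with $c_R$ depending on $R$. I must therefore make sure that all $\bar x$ stay in one fixed annular region, so that a single constant $c_0$ (respectively a single upper bound for $\phi$) applies. This is exactly what $|\bar x|\le R$ from Lemma~\ref{min_point_est} together with $|\bar x|\ge|\lambda_0|/2$ provides.

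If this direct route fails somewhere, for instance if the $\bar x_n$ lower bound in the case $q\ge1$ turns out to be needed, the fallback is a compactness argument. Take $\lambda_k\downarrow\lambda_0$ with $\bar x^k_1\to\lambda_0$, pass to a limit point, and use the strict positivity $\omega_{\lambda_0}>0$ in $\Gamma_{\lambda_0}$ together with the inequality above to reach a contradiction.
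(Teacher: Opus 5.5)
Your proof is correct and follows essentially the same route as the paper. You apply the maximum principle at the minimizer $\bar x$ of $\omega_\lambda$ and use $|\bar x|\ge|\lambda_0|/2$; when $q<1$ you invoke the Kelvin-transformed lower bound $\phi(x)\ge c_0\,x_n|x|^{-(n+2-2s)}$, with $2s+q-1>0$ absorbing the $\bar x_n$-dependence, and when $q\ge1$ the local boundedness of $u$, which together give a uniform bound $\bar d^{-2s}\le C$. The paper phrases this by contradiction (assuming $\bar d\le\delta_0+l$) rather than directly, and your fallback compactness argument is not needed.
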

		\begin{claim}\label{xn>>h}
			$\exists h>0$ such that $\forall\lambda\in(\lambda_0,\lambda_0+\delta_0)$, there holds $\bar x_n\geq h$.
		\end{claim}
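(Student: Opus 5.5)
We would prove the claim by applying Lemma~\ref{maximum_principle} to $\omega_\lambda$ at the minimizer $\bar x$, exactly as in the second half of the proof of Lemma~\ref{min_point_est}. This time we keep track of the height $\bar x_n$ instead of $|\bar x|$. The two geometric facts we need are already available. First, $|\bar x|\le R$ with $R$ from Lemma~\ref{min_point_est}, uniformly in $\lambda<\lambda_0+\delta_0<0$. Second, by Claim~\ref{x1<<lambda0}, $\bar x_1\le\lambda_0-l<0$, hence $|\bar x|\ge|\lambda_0|$. So $\bar x$ lies in the fixed annular region
\[
A:=\{x\in\R^n_+:\ |\lambda_0|\le|x|\le R\},
\]
which is bounded and stays away from the singularity of the Kelvin transforms at the origin.

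\textbf{Step 1 (two-sided bounds for $\phi$ on $A$).} For $x\in A$ the point $x/|x|^2$ lies in the bounded set $B^+_{1/|\lambda_0|}$. Lemma~\ref{lower_bound}, applied with radius $1/|\lambda_0|$, gives
\[
\phi(x)\ \ge\ R^{2s-n}\,c\,\frac{x_n}{|x|^2}\ \ge\ c_1x_n .
\]
Conversely, $u$ is continuous on $\R^n$, so $\phi\le C_1$ on $A$. The constants $c_1,C_1$ depend only on $\lambda_0$ and $R$, not on $\lambda$.

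\textbf{Step 2 (maximum principle at $\bar x$).} Since $m=\omega_\lambda(\bar x)<0$ is the infimum of $\omega_\lambda$ over $\Gamma_\lambda$, Lemma~\ref{maximum_principle} gives
\[
(-\Delta)^s\omega_\lambda(\bar x)\le c\,(\bar d^{-2s}+\bar x_n^{-2s})\,\omega_\lambda(\bar x).
\]
Because $\lambda<0$ and $\bar x_1<\lambda$, we have $|\bar x^\lambda|\le|\bar x|$. Hence
\[
(-\Delta)^s\omega_\lambda(\bar x)\ \ge\ \frac{\phi(\bar x^\lambda)^q-\phi(\bar x)^q}{|\bar x|^\tau}.
\]
Combining the two inequalities forces $\phi(\bar x^\lambda)<\phi(\bar x)$. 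Lemma~\ref{Li_observe} then bounds the difference quotient by $C\,(\phi(\bar x^\lambda)+\phi(\bar x))^{q-1}|\omega_\lambda(\bar x)|$. Dividing by $|\omega_\lambda(\bar x)|>0$ and dropping the $\bar d^{-2s}$ term yields
\[
\bar x_n^{-2s}\ \le\ C\,|\bar x|^{-\tau}\,\bigl(\phi(\bar x^\lambda)+\phi(\bar x)\bigr)^{q-1}\ \le\ C\,|\lambda_0|^{-\tau}\,\bigl(\phi(\bar x^\lambda)+\phi(\bar x)\bigr)^{q-1}.
\]

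\textbf{Step 3 (case split on $q$).}
If $q<1$, then $(\phi(\bar x^\lambda)+\phi(\bar x))^{q-1}\le\phi(\bar x)^{q-1}\le(c_1\bar x_n)^{q-1}$, so $\bar x_n^{-2s}\le C\bar x_n^{q-1}$, i.e. $\bar x_n^{\,1-q-2s}\le C$. Since $q>1-2s=p$, the exponent $1-q-2s$ is negative, and this gives $\bar x_n\ge h>0$.
If $q\ge1$, then $0<\phi(\bar x^\lambda)<\phi(\bar x)\le C_1$, so the right-hand side is at most $C(2C_1)^{q-1}$. This bounds $\bar x_n^{-2s}$ from above, so again $\bar x_n\ge h$.
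All constants depend only on $\lambda_0,R,n,s,p,q$, so $h$ is uniform for $\lambda\in(\lambda_0,\lambda_0+\delta_0)$.

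\textbf{Main obstacle.} The delicate point is uniformity: the lower bound $\phi\ge c_1x_n$ must hold with a $\lambda$-independent constant. This is exactly why we first confine $\bar x$ to $A$ using Claim~\ref{x1<<lambda0} and Lemma~\ref{min_point_est}. Without $|\bar x|\ge|\lambda_0|$, the preimage $\bar x/|\bar x|^2$ could escape every fixed ball, and Lemma~\ref{lower_bound} would give no usable constant.
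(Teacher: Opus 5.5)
Your route is the paper's: the paper proves this claim exactly like Claim~\ref{x1<<lambda0}, with $\bar x_n$ in place of $\bar d$. The ingredients are the same as yours. You apply the maximum principle to $\omega_\lambda$ at $\bar x$. For $q<1$ you use the lower bound $\phi\gtrsim x_n$ from Lemma~\ref{lower_bound} together with $q>1-2s$, so that $\bar x_n^{1-q-2s}\le C$ forces $\bar x_n\ge h$. For $q\ge 1$ you use local boundedness of $u$.

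One step in your Step~2 is wrong as written, although it is easily repaired. Lemma~\ref{Li_observe} with $a=\phi(\bar x^\lambda)<b=\phi(\bar x)$ gives
\[
\phi(\bar x)^q-\phi(\bar x^\lambda)^q\le C\bigl(\phi(\bar x^\lambda)+\phi(\bar x)\bigr)^{q-1}\bigl(\phi(\bar x)-\phi(\bar x^\lambda)\bigr)=C\bigl(\phi(\bar x^\lambda)+\phi(\bar x)\bigr)^{q-1}|\zeta_\lambda(\bar x)|.
\]
The right-hand side of the $\omega_\lambda$-equation is therefore controlled by $\zeta_\lambda$, not by $\omega_\lambda$. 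After dividing by $|\omega_\lambda(\bar x)|$ you are left with the ratio $|\zeta_\lambda(\bar x)|/|\omega_\lambda(\bar x)|$, which must be bounded uniformly in $\lambda$.

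This is where the $\epsilon$-weighted infimum enters. The point $\bar x$ realizes $m=\inf_{\Gamma_\lambda}\min\{\epsilon\zeta_\lambda,\omega_\lambda\}$ through $\omega_\lambda$. Hence $\omega_\lambda(\bar x)=m\le\epsilon\zeta_\lambda(\bar x)<0$, so $|\zeta_\lambda(\bar x)|\le\epsilon^{-1}|\omega_\lambda(\bar x)|$. Your displayed inequality then holds with $C/\epsilon$ in place of $C$. Since $\epsilon$ is fixed by Lemma~\ref{min_point_est} independently of $\lambda$, your Step~3 goes through unchanged.

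A minor remark: you do not need Claim~\ref{x1<<lambda0} to keep $\bar x$ away from the origin. Since $\bar x\in\Gamma_\lambda$ and $\lambda<\lambda_0+\delta_0<\lambda_0/2<0$, you already have $|\bar x|\ge|\bar x_1|>|\lambda_0|/2$. This is the bound the paper uses, with Lemma~\ref{lower_bound} applied on $B^+_{2/|\lambda_0|}$.
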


		Therefore we must have $\bar x\in D$, where
		$$D=\{x\in\Gamma_{\lambda_0-l}:|x|\leq R,~x_n\geq h\}$$
		is a compact subdomain of $\Gamma_{\lambda_0}$ independent of $\lambda$. However, by the (uniform) continuity of $\psi$, for $\lambda$ sufficiently close to $\lambda_0$ we have
		$$\sup_D|\omega_\lambda-\omega_{\lambda_0}|<\min_D\omega_{\lambda_0},$$
		which implies that $\omega_\lambda>0$ over $D$, contradicting \eqref{futher_right_contradict}. Hence such $\lambda_0$ does not exist.
	\end{proof}

	Here we prove Claim~\ref{x1<<lambda0}. Claim~\ref{xn>>h} is proved similarly, with $\bar x_n$ in place of $\bar d$.
	\renewcommand{\proofname}{Proof of Claim~\ref{x1<<lambda0}}
	\begin{proof}
		Suppose, to the contrary, that $\lambda_0-l<\bar x_1<\lambda$. Assuming $2l<\lambda_0$.
		\begin{itemize}
			\item  $q<1$. By the maximum principle (Lemma~\ref{maximum_principle}) and $|\bar x|\geq|\bar x_1|\geq\dfrac{\lambda_0}{2}$.
			\begin{equation}
			\bar d^{-2s}+\bar x_n^{-2s}\leq\frac{c}{|\bar x|^\tau}\phi(\bar x)^{q-1}\leq\frac{c\bar x_n^{q-1}}{|\bar x|^{2(2s+q-1)}}\leq c\bar x_n^{q-1}\min\{C,\bar x_n^{-2(2s+q-1)}\},
			\end{equation}
			hence
			$${\bar d}^{-2s}\leq c\bar x_n^{q-1}\min\{C,\bar x_n^{-2(2s+q-1)}\}-\bar x_n^{-2s}\leq C,$$
			which is impossible if $\delta_0,l$ are sufficiently small since $d\leq\delta_0+l$.
			\item $q\geq1$. Then
			$$\bar d^{-2s}+\bar x_n^{-2s}\leq\dfrac{c}{|\bar x|^\tau}\phi(\bar x)^{q-1}\leq\dfrac{c\max_{B_{2/|\lambda_0|}}|u|}{|\bar x|^{4s}}\leq C,$$
			which is impossible for sufficiently small $\delta_0$ and $l$.
		\end{itemize}
	\end{proof}

	By Proposition~\ref{depend_n}, we have the following simplification:
	\begin{equation}
		\begin{aligned}
			&(-\Delta)^su(x)\\
			=&c_{n,s}\int_{\R^n_+}\Big(\frac{1}{|x-y|^{n+2s}}-\frac{1}{|x-y^*|^{n+2s}}\Big)\big(u(x)-u(y)\big)\D y+2c_{n,s}u(x)\int_{\R^n_+}\frac{\D y}{|x-y^*|^{n+2s}}\\
			=&c_{n,s}\int_{\R^1_+}\big(u(x_n)-u(y_n)\big)\left\{\int_{\R^{n-1}}\Big(\frac{1}{|x-y|^{n+2s}}-\frac{1}{|x-y^*|^{n+2s}}\Big)\D y'\right\}\D y_n\\
			&+2c_{n,s}u(x_n)\int_{\R^1_+}\int_{\R^{n-1}}\frac{\D y'}{|x-y^*|^{n+2s}}\D y_n\\
			=&\bar c_{n,s}\int_{\R^1_+}\left(\frac{1}{|x_n-y_n|^{1+2s}}-\frac{1}{|x_n+y_n|^{1+2s}}\right)\big(u(x_n)-u(y_n)\big)\D y_n+2\bar c_{n,s}u(x_n)\int_{\R^1_+}\frac{\D y_n}{|x_n+y_n|^{1+2s}}\\
			=&c'(-\Delta)^su(x_n),
		\end{aligned}
	\end{equation}
	where, for $y_n\neq x_n$, the change of variables $x'-y'=(x_n-y_n)z$ gives
	$$\int_{\R^{n-1}}\frac{\D y'}{|x-y|^{n+2s}}=\frac{1}{|x_n-y_n|^{1+2s}}\int_{\R^{n-1}}\frac{\D z}{(|z|^2+1)^{s+n/2}}$$
	and similarly
	$$\int_{\R^{n-1}}\frac{\D y'}{|x-y^*|^{n+2s}}=\frac{1}{|x_n+y_n|^{1+2s}}\int_{\R^{n-1}}\frac{\D z}{(|z|^2+1)^{s+n/2}}.$$
	Write $u(x)=u(t),v(x)=v(t)$ where $t=x_n$. By a suitable scaling, system \eqref{problem} reduces to a one-dimensional problem
	\begin{equation}\label{1D_problem}
		\begin{cases}
			(-\Delta)^su=v^p;\\
			(-\Delta)^sv=u^q
		\end{cases}\text{  in  }\R_+,
	\end{equation}
	where $u,v\in L_{2s+1}(\R)\cap C^{0,1}_{\mathrm{loc}}(\R_+)\cap C(\R)$ are anti-symmetric.
	
	In the following, although the functions are only assumed to belong to $L_{2s+1}$, they are anti-symmetric with respect to some point, so that the fractional Laplacian is always well-defined. To be more specific, let
	$$\zeta\in L_{2s+1}(\R)\cap C(\R)~~\text{such that}~~\zeta(2\lambda-t)=-\zeta(t),\,\forall t\in\R,$$
	i.e. $\zeta$ is anti-symmetric with respect to $\lambda$. If $\zeta$ is $C^{0,1}$ at $t<\lambda$, we define the fractional Laplacian of $\zeta$ at $t$ (compared to \eqref{def_in_L2s+1}):
	\begin{equation}\label{1D_def}
		(-\Delta)^s\zeta(t)=c_{1,s}\int^\lambda_{-\infty}\left(\frac{1}{|t-r|^{1+2s}}-\frac{1}{|t+r-2\lambda|^{1+2s}}\right)\big(\zeta(t)-\zeta(r)\big)\D r+\frac{c_{1,s}\zeta(t)}{s(\lambda-t)^{2s}}.
	\end{equation}
	The maximum principle in one dimension is immediate:
	\begin{lem}\label{1D_maximum_pinciple}
		 If $t<\lambda$ attains the negative infimum of $\zeta$ over $(-\infty,\lambda)$, then
		$$(-\Delta)^s\zeta(t)\leq c\zeta(t)(\lambda-t)^{-2s}.$$ 
	\end{lem}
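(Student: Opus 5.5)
We read the claim directly off the one-dimensional formula \eqref{1D_def}, splitting it into its integral term and its boundary term and checking the sign of each at the minimum point $t$.

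\textbf{Integral term.} Since $t<\lambda$ attains $\inf_{(-\infty,\lambda)}\zeta<0$, we have $\zeta(t)-\zeta(r)\le 0$ for every $r<\lambda$. For $t,r<\lambda$,
$$|t+r-2\lambda|=(\lambda-t)+(\lambda-r)>|t-r|,$$
so the kernel
$$\frac{1}{|t-r|^{1+2s}}-\frac{1}{|t+r-2\lambda|^{1+2s}}$$
is strictly positive on $(-\infty,\lambda)$. The integrand is therefore nonpositive, and the whole integral term is $\le 0$.

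This uses $\zeta\in C^{0,1}$ near $t$ and $\zeta\in L_{2s+1}$, so that the integral is well defined. For $r$ near $t$, the singular part is controlled by the Lipschitz bound. For $r\to-\infty$, the kernel difference is $O(|r|^{-2-2s})$. One should check that this decay rate is integrable against the growth of $\zeta$; it is, given the $L_{2s+1}$ assumption.

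\textbf{Boundary term.} The remaining piece is $\dfrac{c_{1,s}\zeta(t)}{s(\lambda-t)^{2s}}$. It is already of the desired form $c\,\zeta(t)(\lambda-t)^{-2s}$ with $c=c_{1,s}/s>0$.

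\textbf{Conclusion.} Dropping the nonpositive integral gives
$$(-\Delta)^s\zeta(t)\le \frac{c_{1,s}}{s}\,\zeta(t)(\lambda-t)^{-2s},$$
which is the claim.

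There is no genuine obstacle here. The only point needing care is the kernel positivity, which is the one-dimensional analogue of $g_\lambda>0$ and reduces to the elementary distance comparison above. If a sharper constant were wanted, one could additionally keep part of the integral over $r<2t-\lambda$, but this is not needed.
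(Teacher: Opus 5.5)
Your argument is correct and is exactly what the paper means by calling the lemma immediate (it writes out no proof). At a minimum point, the integral term in \eqref{1D_def} is nonpositive because the kernel $|t-r|^{-1-2s}-|t+r-2\lambda|^{-1-2s}$ is positive for $t,r<\lambda$, and the remaining boundary term is already $\frac{c_{1,s}}{s}\zeta(t)(\lambda-t)^{-2s}$; your well-definedness check (Lipschitz bound near $r=t$ with $2s<1$, and $O(|r|^{-2-2s})$ decay against $\zeta\in L_{2s+1}(\R)$) is also right.
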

	\renewcommand{\proofname}{Proof}

	We show monotonicity of solutions for \eqref{1D_problem}. Instead of making a Kelvin transform as in the proof of Proposition~\ref{depend_n}, we deal with $u,v$ themselves directly. $\forall\lambda>0$, define $t^\lambda=2\lambda-t$, the reflection of $t$ with respect to $\lambda$; 
	$$u_\lambda(t)=u(t^\lambda),\,v_\lambda(t)=v(t^\lambda).$$
	Define the reflected differences by
	$$\tilde\zeta_\lambda=u_\lambda-u,\,\tilde\omega_\lambda=v_\lambda-v~~\text{in}~~\R.$$
	Then $\tilde\zeta_\lambda,\tilde\omega_\lambda\in L_{2s+1}(\R)$. Since $\tilde\zeta_\lambda$ and $\tilde\omega_\lambda$ are anti-symmetric with respect to $\lambda$, the fractional Laplacian of them is defined by \eqref{1D_def}. However, the equation of $u$ and $v$ is in the sense of definition \eqref{def_in_L2s+1}. We show that the two definitions coincide with each other and derive the equation of $\tilde\zeta_\lambda,\,\tilde\omega_\lambda$.
	\begin{lem}\label{tilde_equation}
		$\tilde\zeta_\lambda$ and $\tilde\omega_\lambda$ satisfy the following system in the sense of \eqref{1D_def},
		\begin{equation*}
			\begin{cases}
				(-\Delta)^s\tilde\zeta_\lambda=v_\lambda^p-v^p;\\
				(-\Delta)^s\tilde\omega_\lambda=u_\lambda^q-u^q
			\end{cases}~~\text{in}~~(0,\lambda).
		\end{equation*}
	\end{lem}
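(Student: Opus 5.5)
The approach is to compute both definitions directly and show they agree. The point is that $u$ is anti-symmetric about $0$, so $(-\Delta)^s u$ in the sense of \eqref{def_in_L2s+1} (1D version, with reflection $r\mapsto -r$) equals the full principal-value integral
$$c_{1,s}\,\mathrm{P.V.}\int_{\R}\frac{u(t)-u(r)}{|t-r|^{1+2s}}\D r,$$
which converges because anti-symmetry gives the extra cancellation at infinity. The same holds for $u_\lambda$, $\tilde\zeta_\lambda$, and their analogues: definition \eqref{1D_def} is the same P.V. integral, rewritten by folding about $\lambda$.

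\textbf{Step 1.} Fix $t\in(0,\lambda)$. Truncate the integral to $|r|<M$, or better to a symmetric window about $t$ large enough to contain the relevant reflections. In the truncated integral, split $\R$ into $(-\infty,0)\cup(0,\infty)$ and substitute $r\mapsto -r$ on the negative part. Using $u(-r)=-u(r)$, this gives exactly the integrand of \eqref{def_in_L2s+1}, including the term $2u(t)\int_0^\infty|t+r|^{-1-2s}\D r$. The error from truncation is $O(M^{-1})\int_{|r|>M}|u(r)|\,|r|^{-1-2s-1}\cdot M$-type tails. These vanish as $M\to\infty$ because $u\in L_{2s+1}$ and, after pairing $r$ with $-r$, the kernel difference is $O(|r|^{-2-2s})$. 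This identifies the P.V. integral with the paper's definition for $u$ and $v$.

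\textbf{Step 2.} $u_\lambda(t)=u(2\lambda-t)$ is anti-symmetric about $2\lambda$. Its P.V. integral at $t$ is, by the change of variables $r\mapsto 2\lambda-r$, the P.V. integral of $u$ at $2\lambda-t$. Since $2\lambda-t>\lambda>0$, this equals $v^p(2\lambda-t)=v_\lambda^p(t)$. One has to ensure convergence at infinity: $u_\lambda$ is anti-symmetric about $2\lambda$, not about $t$, but the tail $\int_{|r|>M}[u(r)-u(-r+c)]|r|^{-1-2s}$ is handled by pairing $r$ with $4\lambda-r$ and using $u\in L_{2s+1}$ together with local boundedness. By linearity, $\tilde\zeta_\lambda=u_\lambda-u$ has P.V. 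Laplacian $v_\lambda^p-v^p$ at $t$, where the P.V. is taken with symmetric truncations about the reflection center $\lambda$. Consistency of different truncation centers follows from the same tail estimate.

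\textbf{Step 3.} Finally, fold the P.V. integral of $\tilde\zeta_\lambda$ about $\lambda$. Split $\R=(-\infty,\lambda)\cup(\lambda,\infty)$, substitute $r\mapsto 2\lambda-r$ on the second part, and use $\tilde\zeta_\lambda(2\lambda-r)=-\tilde\zeta_\lambda(r)$. The integrand becomes
$$\left(|t-r|^{-1-2s}-|t+r-2\lambda|^{-1-2s}\right)\big(\tilde\zeta_\lambda(t)-\tilde\zeta_\lambda(r)\big)$$
plus $2\tilde\zeta_\lambda(t)\int_{-\infty}^\lambda|t+r-2\lambda|^{-1-2s}\D r$. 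The last integral equals $\frac{1}{2s}(\lambda-t)^{-2s}$, which yields the term $\frac{c_{1,s}\tilde\zeta_\lambda(t)}{s(\lambda-t)^{2s}}$ in \eqref{1D_def}. The same argument applies to $\tilde\omega_\lambda$.

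The main obstacle is justifying the interchange of limits in Steps 1 and 2. Since $u\notin L_{2s}$ in general, the P.V. integral over $\R$ converges only conditionally at infinity, so every manipulation must be carried out with truncations symmetric about the relevant center, and one must check that shifting that center changes the result only by terms tending to $0$. That check uses the $O(|r|^{-2-2s})$ kernel cancellation and $u\in L_{2s+1}$. Local Lipschitz regularity near $t$ handles the singularity at $r=t$.
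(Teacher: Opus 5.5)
Your Steps 1 and 3 are correct. In Step 1 there is in fact no truncation error: with windows $(-M,M)$ the folding about $0$ is an exact identity, and the folded integrand is absolutely integrable.

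\textbf{The gap is in Step 2, where the whole content of the lemma lies.} You get the principal value of $u_\lambda$ with windows centred at $2\lambda$ and that of $u$ with windows centred at $0$. You then apply linearity with windows centred at $\lambda$, asserting that re-centring each function's truncation separately costs only vanishing terms. This does not follow from $u\in L_{2s+1}$ and is false in general. Moving the window for $\int\frac{u(y)-u(r)}{|y-r|^{1+2s}}\D r$ from $(-M,M)$ to $(\lambda-M,\lambda+M)$ changes it by $O(M^{-1-2s})$ minus
$$\int_M^{M+\lambda}\frac{u(r)\D r}{(r-y)^{1+2s}}+\int_{M-\lambda}^{M}\frac{u(r)\D r}{(r+y)^{1+2s}}.$$
Since $u$ is anti-symmetric and nonnegative on $\R_+$, the strips near $\pm M$ add rather than cancel. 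Membership in $L_{2s+1}$ only gives $\int_{M-\lambda}^{M+\lambda}u(r)r^{-2-2s}\D r\to0$, which misses by a factor $M$. For example, bumps of height $2^{k(1+2s)}$ and width $1$ at $r=2^k$ give a function in $L_{2s+1}$ for which these strip terms do not tend to $0$, so the re-centred principal values need not even exist. The same strip term is left over in your tail estimate for $u_\lambda$, since pairing $r$ with $4\lambda-r$ only controls windows centred at $2\lambda$. No kernel difference of order $|r|^{-2-2s}$ arises when you shift the window of a single function, so the cancellation you cite cannot do this job.

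\textbf{The missing idea is to combine before re-centring.} Substitute $\tau=2\lambda-r$ in the $u_\lambda$ part; this maps any window $I$ symmetric about $\lambda$ to itself and gives
$$\int_I\frac{\tilde\zeta_\lambda(t)-\tilde\zeta_\lambda(r)}{|t-r|^{1+2s}}\D r=\int_I H(\tau)\D\tau,\qquad H(\tau)=\frac{u(2\lambda-t)-u(\tau)}{|2\lambda-t-\tau|^{1+2s}}-\frac{u(t)-u(\tau)}{|t-\tau|^{1+2s}}.$$
Meanwhile $c_{1,s}\int_{-M}^{M}H$ is the difference of the two natural truncations centred at $0$, and it tends to $v(2\lambda-t)^p-v(t)^p$. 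In $H$ the function $u(\tau)$ multiplies $|t-\tau|^{-1-2s}-|2\lambda-t-\tau|^{-1-2s}=O(|\tau|^{-2-2s})$. Hence $H\in L^1(\R)$, using $u\in L_{2s+1}$ and the Lipschitz bounds near $t$ and $2\lambda-t$. So all windows give the same limit $\int_\R H$, and your Step 3 then finishes the proof.

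This is exactly the paper's argument. It proves $H\in L^1(\R)$ and then folds $\int_\R H$ about $\lambda$, computing $H(\tau)+H(2\lambda-\tau)$ to recover the integrand of \eqref{1D_def}.
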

	\begin{proof}
		By definition, $\tilde\zeta$ is anti-symmetric with respect to $\lambda$. Fix $0<t<\lambda$ and define
		$$H(\tau):=\frac{u(2\lambda-t)-u(\tau)}{|2\lambda-t-\tau|^{1+2s}}-\frac{u(t)-u(\tau)}{|t-\tau|^{1+2s}}.$$
		Since
		$$H(\tau)=O(|\tau-t|^{-2s}),\,\tau\to t~~\text{and}~~H(\tau)=O(|2\lambda-t-\tau|^{-2s}),\,\tau\to2\lambda-t$$
		due to the regularity of $u$, and 
		$$|H(\tau)|\lesssim|\tau|^{-1-2s}+|u(\tau)||\tau|^{-2-2s},\,\tau\to\infty$$
		due to the cancellation of the kernel at infinity, $H$ is integrable on $\R$. By definition \eqref{def_in_L2s+1} with $n=1$ or \eqref{1D_def} with $\lambda=0$ after a reflection,
		$$(-\Delta)^su(y)=c_{1,s}\lim_{R\to\infty}\int_{-R}^{R}\frac{u(y)-u(\tau)}{|y-\tau|^{1+2s}}\D\tau.$$
		Applying this identity at $t$ and $2\lambda-t$, we obtain
		$$(-\Delta)^su(2\lambda-t)-(-\Delta)^su(t)=c_{1,s}\lim_{R\to\infty}\int_{-R}^{R}H(\tau)\D\tau=c_{1,s}\int_{\R}H(\tau)\D\tau.$$
		Since \(H\in L^1(\mathbb R)\), we may split the integral at \(\lambda\) and make a change of variable,
		$$\int_{\mathbb R}H(\tau)\D\tau=\int_{-\infty}^{\lambda}H(\tau)\D\tau+\int_{-\infty}^{\lambda}H(2\lambda-\tau)\D\tau=\int_{-\infty}^{\lambda}\bigl(H(\tau)+H(2\lambda-\tau)\bigr)\D\tau.$$
		A direct computation gives
		$$\begin{aligned}
		&H(\tau)+H(2\lambda-\tau)\\
		=&\frac{u(2\lambda-t)-u(t)+u(\tau)-u(2\lambda-\tau)}{|t-\tau|^{1+2s}}+\frac{u(2\lambda-t)-u(t)-u(\tau)+u(2\lambda-\tau)}{|t+\tau-2\lambda|^{1+2s}}\\
		=&\frac{\widetilde\zeta(t)-\widetilde\zeta(\tau)}{|t-\tau|^{1+2s}}+\frac{\widetilde\zeta(t)+\widetilde\zeta(\tau)}{|t+\tau-2\lambda|^{1+2s}}\\
		=&\left(\frac1{|t-\tau|^{1+2s}}-\frac1{|t+\tau-2\lambda|^{1+2s}}\right)\bigl(\widetilde\zeta(t)-\widetilde\zeta(\tau)\bigr)+\frac{2\widetilde\zeta(t)}{|t+\tau-2\lambda|^{1+2s}}.
		\end{aligned}$$
		Thus
		$$\begin{aligned}
		&(-\Delta)^su(2\lambda-t)-(-\Delta)^su(t)\\
		=&c_{1,s}\int_{-\infty}^{\lambda}\left(\frac1{|t-\tau|^{1+2s}}-\frac1{|t+\tau-2\lambda|^{1+2s}}\right)\bigl(\widetilde\zeta(t)-\widetilde\zeta(\tau)\bigr)\D\tau\\
		&+2c_{1,s}\widetilde\zeta(t)\int_{-\infty}^{\lambda}\frac{\D\tau}{|t+\tau-2\lambda|^{1+2s}}\\
		=&(-\Delta)^s\widetilde\zeta(t),
		\end{aligned}$$
		that is, $(-\Delta)^s\tilde\zeta_\lambda=v_\lambda^p-v^p$. The equation for $\tilde\omega$ follows similarly.
	\end{proof}
	
	\begin{prop}\label{increasing}
		$u,v$ are monotone increasing.
	\end{prop}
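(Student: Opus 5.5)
We prove that $\tilde\zeta_\lambda\ge0$ and $\tilde\omega_\lambda\ge0$ on $(0,\lambda)$ for every $\lambda>0$. Since $t^\lambda=2\lambda-t$ ranges over all pairs $t<t'$ with $t>0$, this gives $u(t)\le u(t')$ and $v(t)\le v(t')$ for $0<t<t'$.

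The argument is a moving-plane scheme in $\lambda$ on the one-dimensional system \eqref{1D_problem}, using Lemma~\ref{tilde_equation} and the one-dimensional maximum principle Lemma~\ref{1D_maximum_pinciple}. On $(-\infty,0]$ we have $u\le0\le u_\lambda$ for $t\in(-\infty,0]$ with $2\lambda-t\ge 2\lambda>0$, so $\tilde\zeta_\lambda\ge0$ there. Hence any negative infimum of $\tilde\zeta_\lambda$ over $(-\infty,\lambda)$ is attained in $(0,\lambda)$: it cannot escape to $-\infty$, because there $\tilde\zeta_\lambda\ge0$, and it cannot escape to $t=\lambda$, because $\tilde\zeta_\lambda(\lambda)=0$. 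The same holds for $\tilde\omega_\lambda$.

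\textbf{Start for small $\lambda$.} Suppose $\tilde\zeta_\lambda(t_0)=\min\tilde\zeta_\lambda<0$ with $t_0\in(0,\lambda)$. Lemma~\ref{1D_maximum_pinciple} gives
$$v_\lambda^p(t_0)-v^p(t_0)\le c\,\tilde\zeta_\lambda(t_0)(\lambda-t_0)^{-2s}<0.$$
So $\tilde\omega_\lambda(t_0)<0$, hence $\inf\tilde\omega_\lambda<0$, attained at some $t_1\in(0,\lambda)$. There Lemma~\ref{1D_maximum_pinciple} gives $u_\lambda^q(t_1)-u^q(t_1)\le c\,\tilde\omega_\lambda(t_1)(\lambda-t_1)^{-2s}<0$, so $\tilde\zeta_\lambda(t_1)<0$. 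We combine the two inequalities using Lemma~\ref{Li_observe} and the bounds $\min\{u,v\}\ge c_Rt$ from Lemma~\ref{lower_bound}, together with local boundedness of $u,v$.

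For the $p$-equation (with $p<1$) this yields
$$(\lambda-t_0)^{-2s}\,|\tilde\zeta_\lambda(t_0)|\le C\,v(t_0)^{p-1}\,|\tilde\omega_\lambda(t_0)|\le C\,t_0^{p-1}\,|\tilde\omega_\lambda(t_1)|.$$
For the $q$-equation it yields
$$(\lambda-t_1)^{-2s}\,|\tilde\omega_\lambda(t_1)|\le C\,\max\{t_1^{q-1},1\}\,|\tilde\zeta_\lambda(t_0)|.$$
Multiplying the two estimates and using $t_i<\lambda$ gives
$$\lambda^{-4s}\le C\,t_0^{p-1}\,\max\{t_1^{q-1},1\}.$$
Since $p-1=-2s$, this becomes $t_0^{2s}\lambda^{-4s}\lesssim\max\{t_1^{q-1},1\}$, and with $q\ge1-2s$ this is \emph{not} an immediate contradiction. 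This is exactly the borderline difficulty.

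\textbf{Weighted comparison.} To get around it, we mimic Lemma~\ref{min_point_est} and minimize $\min\{\epsilon\tilde\zeta_\lambda,\tilde\omega_\lambda\}$ for a small $\epsilon$.
\begin{itemize}
\item If the minimum is $\epsilon\tilde\zeta_\lambda(t_0)$, then $|\tilde\omega_\lambda(t_0)|\le\epsilon|\tilde\zeta_\lambda(t_0)|$. Together with $v\ge ct$ this gives $(\lambda-t_0)^{-2s}\le C\epsilon\, t_0^{-2s}$, which fails for small $\epsilon$ because $t_0<\lambda$ forces $(\lambda-t_0)^{-2s}\ge\lambda^{-2s}$. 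The precise comparison of $t_0$ and $\lambda-t_0$ needs care: if $t_0\ll\lambda$, compare against the $\R^n$-style $x_n^{-2s}$ term, which here is absorbed since the definition \eqref{1D_def} also gives a boundary term at $0$ via anti-symmetry of $u$ about $0$. We may refine Lemma~\ref{1D_maximum_pinciple} to include a factor $t^{-2s}$, as in Lemma~\ref{maximum_principle} with both reflections.
\item If the minimum is $\tilde\omega_\lambda(t_1)$, then $|\tilde\zeta_\lambda(t_1)|\le\epsilon^{-1}|\tilde\omega_\lambda(t_1)|$, which gives $(\lambda-t_1)^{-2s}+t_1^{-2s}\le C_\epsilon\max\{t_1^{q-1},1\}$. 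For $q>1-2s$ this bounds $t_1$ from below and $\lambda$ from below, so the property holds for all small $\lambda$. For $q=1-2s$ we again use a refined constant.
\end{itemize}

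\textbf{Continuation.} Let $\lambda_0=\sup\{\mu:\tilde\zeta_\lambda,\tilde\omega_\lambda\ge0\ \forall\lambda\le\mu\}$ and suppose $\lambda_0<\infty$. As in the proof of Proposition~\ref{0symm}, the strong maximum principle via the positive kernel $g_\lambda$ gives strict positivity at $\lambda_0$. Minimizers for $\lambda\downarrow\lambda_0$ are then localized: they stay away from $\lambda$ and from $0$ by the same maximum-principle inequalities, and they are bounded because the negative region shrinks. Uniform continuity then gives a contradiction.

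The main obstacle is the localization of the negative minimum in the borderline exponent. The lower-bound term $t^{p-1}=t^{-2s}$ exactly balances the maximum-principle weight, which is why the $\epsilon$-weighted minimum is essential. Boundedness of minimizers as $\lambda$ grows also needs a decay argument, since no Kelvin transform is used here.
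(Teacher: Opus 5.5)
Your overall scheme matches the paper's. You minimize $\min\{\epsilon\tilde\zeta_\lambda,\tilde\omega_\lambda\}$, exclude the $\epsilon\tilde\zeta_\lambda$ branch, keep the $\tilde\omega_\lambda$-minimizer a fixed distance from $\lambda$, start at small $\lambda$, and continue by uniform continuity. However, the decisive step is not established: excluding the $\epsilon\tilde\zeta_\lambda$ branch, which you yourself call the main obstacle. From $v(t_0)\ge ct_0$ you get $(\lambda-t_0)^{-2s}\le C\epsilon\,t_0^{-2s}$. This is no contradiction when $t_0\lesssim\epsilon^{1/(2s)}\lambda$, and the remark $(\lambda-t_0)^{-2s}\ge\lambda^{-2s}$ does not help, since $C\epsilon\,t_0^{-2s}$ can exceed $\lambda^{-2s}$. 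Your proposed repair is a ``refined'' Lemma~\ref{1D_maximum_pinciple} with an extra $t^{-2s}$ term ``as in Lemma~\ref{maximum_principle} with both reflections.'' It rests on a symmetry $\tilde\zeta_\lambda$ does not have: $\tilde\zeta_\lambda(-t)=u(2\lambda+t)+u(t)\neq-\tilde\zeta_\lambda(t)$. In one dimension the reflections about $0$ and $\lambda$ do not commute, unlike $x\mapsto x^*$ and $x\mapsto x^\lambda$ in $\R^n$. So as written the key inequality is asserted, not proved.

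The missing observation, which is the paper's, is to put the lower bound on the \emph{reflected} value. At $t_0$ you have $v_\lambda(t_0)<v(t_0)$, so concavity gives $v(t_0)^p-v_\lambda(t_0)^p\le p\,v_\lambda(t_0)^{p-1}|\tilde\omega_\lambda(t_0)|$. Lemma~\ref{lower_bound} gives $v_\lambda(t_0)=v(2\lambda-t_0)\ge c(2\lambda-t_0)$. Hence $c(\lambda-t_0)^{-2s}\le C\epsilon(2\lambda-t_0)^{-2s}\le C\epsilon(\lambda-t_0)^{-2s}$, a contradiction for small $\epsilon$, with no $t_0^{-2s}$ term needed. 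Equivalently, $v(t_0)>v(2\lambda-t_0)\ge c\lambda$, after which your own remark closes the argument.

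The same trick handles the second branch. For $q<1$, $u_\lambda(\tilde t)^{q-1}\le C(2\lambda-\tilde t)^{q-1}\le C(\lambda-\tilde t)^{q-1}$, which gives $\lambda-\tilde t\ge l$ directly from $q>1-2s$.

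Alternatively, your refined principle is true, but for a different reason that you would need to supply. Since $\tilde\zeta_\lambda\ge0$ on $(-\infty,0)$, the part of \eqref{1D_def} over $r<0$ is at most $\frac{c_{1,s}}{2s}\tilde\zeta_\lambda(t_0)\big(t_0^{-2s}-(2\lambda-t_0)^{-2s}\big)$. This is $\lesssim-|\tilde\zeta_\lambda(t_0)|\,t_0^{-2s}$ when $t_0\le\lambda/2$, and for $t_0>\lambda/2$ the $(\lambda-t_0)^{-2s}$ term dominates.

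Three smaller points:
\begin{itemize}
\item The case $q=1-2s$ is outside this proposition's scope. Here $q>1-2s$, and $p=q=1-2s$ falls under Proposition~\ref{D=0_no_sol}.
\item No decay argument is needed. The negative minimum lies in the bounded interval $(0,\lambda)$, and the continuation only involves $\lambda$ near a finite $\lambda_0$.
\item You need not keep minimizers away from $0$, because $\tilde\omega_{\lambda_0}(0)=v(2\lambda_0)>0$.
\end{itemize}
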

	\begin{proof}
		It suffices to show that $\tilde\zeta_\lambda,\,\tilde\omega_\lambda$ are always nonnegative. We use Lemma~\ref{tilde_equation} to establish the following claims.		
		\begin{claim}\label{1D_epsilonzeta_notinf}
			Let $\lambda<2\lambda_0$ for some fixed constant $\lambda_0>0$. Then there exists $\epsilon>0$, depending on $\lambda_0$, such that if
			$$\tilde m:=\inf\limits_{(0,\lambda)}\min\{\tilde\omega_\lambda,\epsilon\tilde\zeta_\lambda\}<0$$
			is attained by some $\tilde t\in(0,\lambda)$, then $\tilde m=\tilde\omega_\lambda(\tilde t)\leq\epsilon\tilde\zeta_\lambda(\tilde t)$.
		\end{claim}	
		Suppose, to the contrary, that $\tilde m=\epsilon\tilde\zeta_\lambda(\tilde t)<\tilde\omega_\lambda(\tilde t)$. Since $\tilde\zeta>0,\tilde\omega>0$ in $(-\infty,0)$, $\tilde m$ is the negative infimum of $\epsilon\tilde\zeta_\lambda$ over $(-\infty,\lambda)$. By Lemma~\ref{1D_maximum_pinciple}, we have
		\begin{equation}
			v_\lambda(\tilde t)^p-v(\tilde t)^p=(-\Delta)^s\tilde\zeta_\lambda(\tilde t)\leq c\tilde\zeta_\lambda(\tilde t)(\lambda-\tilde t)^{-2s}<0,
		\end{equation}
		hence $v_\lambda(\tilde t)<v(\tilde t),\,v_\lambda(\tilde t)^p-v(\tilde t)^p\geq pv_\lambda(\tilde t)^{p-1}\tilde\omega_\lambda(\tilde t)$. Therefore
		\begin{equation}\label{1D_epsilon_min}
			c(\lambda-\tilde t)^{-2s}\leq pv_\lambda(\tilde t)^{p-1}\frac{\tilde\omega_\lambda(\tilde t)}{\tilde\zeta_\lambda(\tilde t)}\leq c'\epsilon(2\lambda-\tilde t)^{-2s},
		\end{equation} 
		where we used $v(t)\geq ct$ for bounded $t$. Choosing $\epsilon$ sufficiently small, \eqref{1D_epsilon_min} is impossible. 
		\begin{claim}\label{d>l}
			Under the assumptions of Claim~\ref{1D_epsilonzeta_notinf}, there exists $l>0$ (independent of $\lambda$) such that $\lambda-\tilde t>l$.
		\end{claim}
		By Claim~\ref{1D_epsilonzeta_notinf}, $\tilde\omega_\lambda$ takes its negative infimum over $(-\infty,\lambda)$ at $\tilde t$. Lemma~\ref{1D_maximum_pinciple} yields
		\begin{equation}
			u_\lambda(\tilde t)^q-u(\tilde t)^q\leq(-\Delta)^s\tilde\omega_\lambda(\tilde t)\leq c\tilde\omega_\lambda(\tilde t)(\lambda-\tilde t)^{-2s}<0,
		\end{equation}
		which implies $u_\lambda(\tilde t)<u(\tilde t)$.
		\begin{itemize}
			\item $q<1$. By Lemma~\ref{lower_bound} $u(t)\geq ct$ for $0<t<4\lambda_0$, we have
			\begin{equation}			
			(\lambda-\tilde t)^{-2s}\leq cu_\lambda(\tilde t)^{q-1}\frac{\tilde\zeta_\lambda(\tilde t)}{\tilde\omega_\lambda(\tilde t)}\leq c(2\lambda-\tilde t)^{q-1}\leq c(\lambda-\tilde t)^{q-1}.
			\end{equation}
			\item $q\geq1$. Since $u$ is locally bounded, $(\lambda-\tilde t)^{-2s}\leq cu(\tilde t)^{q-1}\leq c\max\limits_{[0,2\lambda_0]}u^{q-1}$.
		\end{itemize}
		This proves the claim.
	
		Now we increase $\lambda$ from $0$ to $\infty$:\\
		\textbf{Step 1.} We first show that $\tilde\zeta_\lambda>0,\,\tilde\omega_\lambda>0$ for sufficiently small $\lambda$. Otherwise, assuming
		$$\tilde m:=\inf\limits_{(0,\lambda)}\min\{\tilde\omega_\lambda,\epsilon\tilde\zeta_\lambda\}<0,$$ there exists a sequence $\{t^k\}\subset(0,\lambda)$ such that $$\min\{\tilde\omega_\lambda(t^k),\epsilon\tilde\zeta_\lambda(t^k)\}\to\tilde m.$$
		Let $\tilde t\in[0,\lambda]$ be a limit point of $\{t^k\}$. Since $\tilde\omega_\lambda$ and $\tilde\zeta_\lambda$ are nonnegative on $(-\infty,0]\cup\{\lambda\}$, we must have $\tilde t\in(0,\lambda)$ and in fact $\tilde t$ is the minimum point on $(-\infty,\lambda]$. This is impossible if $\lambda<l$, by Claim~\ref{1D_epsilonzeta_notinf} and \ref{d>l}.\\		
		\textbf{Step 2.} We increase $\lambda$ as long as $\tilde\zeta_\lambda>0,\,\tilde\omega_\lambda>0$. If the process stops at some $\lambda_0<\infty$, the same argument as in the proof of Proposition~\ref{0symm} shows $\tilde\zeta_{\lambda_0}>0,\,\tilde\omega_{\lambda_0}>0$ in $(0,\lambda_0)$. By the definition of $\lambda_0,\,\forall k\geq2$, there exists $\lambda_k\in(\lambda_0,\lambda_0+l/k)$ such that
		$$\tilde m=\inf\limits_{(0,\lambda_k)}\min\{\tilde\omega_{\lambda_k},\epsilon\tilde\zeta_{\lambda_k}\}<0.$$
		By Claim~\ref{1D_epsilonzeta_notinf} and the fact that $\tilde\omega_{\lambda_k}$ and $\tilde\zeta_{\lambda_k}$ are nonnegative on $\{0,\lambda_k\}$, there exists $\bar t\in(0,\lambda_k)$ such that $\tilde m=\tilde\omega_{\lambda_k}(\bar t)$. Claim~\ref{d>l} gives $\bar t<\lambda_0-l/2$.
		However, since $\tilde\omega_{\lambda_0}>0$ on $(0,\lambda_0)$ and $\tilde\omega_{\lambda_0}(0)=v(2\lambda_0)>0$, by uniform continuity, $$\inf_{[0,\lambda_0-l/2]}\tilde\omega_{\lambda_k}\geq\inf_{[0,\lambda_0-l/2]}\tilde\omega_{\lambda_0}-\sup_{[0,\lambda_0-l/2]}|\tilde\omega_{\lambda_k}-\tilde\omega_{\lambda_0}|>0$$
		for $k$ sufficiently large. This yields a contradiction.
		
		Hence, we can increase $\lambda$ all the way to $+\infty$, which proves that $u,v$ are increasing.
	\end{proof}

	Now we complete the proof of the statement when $p=1-2s<q\leq\dfrac{n+2s}{n-2s}$ in Theorem~\ref{non_exist}. Lemma~\ref{lower_bound} implies:
	\begin{equation}\label{uv_grow}
		\begin{aligned}
			u(t)\geq&c_s'\int_0^\infty\big(|t-r|^{2s-1}-|t+r|^{2s-1}\big)v(r)^p\D r\\
			\geq&c_s'\int_t^\infty\big((r-t)^{2s-1}-(r+t)^{2s-1}\big)v(r)^p\D r\\
			\geq&ct\int_t^\infty(r+t)^{2s-2}v(r)^p\D r;\\
			v(t)\geq&ct\int_t^\infty(r+t)^{2s-2}u(r)^q\D r,
		\end{aligned}
	\end{equation}
	where we used the mean value theorem. By monotonicity, $u(t)\geq c$, $v(t)\geq c$ for $t\geq1$. Then \eqref{uv_grow} yields
	$$u(t)\geq ct\int_t^\infty(r+t)^{2s-2}\D r\geq ct^{2s}$$
	and similarly $v(t)\geq ct^{2s}$ for $t\geq1$. Denote
	$$\left(\begin{array}{c}
		\alpha_0\\
		\beta_0
	\end{array}\right)=
	\left(\begin{array}{c}
	0\\
	0
	\end{array}\right);~~
	\left(\begin{array}{c}
		\alpha_{k+1}\\
		\beta_{k+1}
	\end{array}\right)=
	\left(\begin{array}{c}
		2s\\
		2s
	\end{array}\right)+A
	\left(\begin{array}{c}
		\alpha_k\\
		\beta_k
	\end{array}\right),\text{  where  }A=
	\left(\begin{array}{cc}
		0 & p\\
		q & 0
	\end{array}\right).$$
	Then the above estimates read
	\begin{equation}\label{uv_kth_grow}
		u(t)\geq c_kt^{\alpha_k},\,v(t)\geq c_kt^{\beta_k},\,\forall t\geq 1
	\end{equation}
	 for $k=0,1$. Plugging \eqref{uv_kth_grow} into \eqref{uv_grow} inductively, we derive that \eqref{uv_kth_grow} holds for any $k\in\mathbb{N}$. We distinguish the following three cases.
	 \begin{itemize}
	 	\item $pq<1$. Then $A^k\to 0$, the sequence $(\alpha_k,\beta_k)$ converges to $\big(\dfrac{2s(p+1)}{1-pq},\dfrac{2s(q+1)}{1-pq}\big)$. The limiting exponents are also referred to as the scaling exponents of the Lane--Emden system, agreeing with those in \cite{MR3538413}. Hence $\forall\alpha\in\big(0,\dfrac{2s(p+1)}{1-pq}\big)$ we have $u(t)\geq ct^\alpha$ for $t>1$. Then \eqref{uv_grow} implies
	 	$$c\int_1^\infty(r+1)^{2s-2}r^{q\alpha}\D r\leq v(1).$$
	 	However, since $2s-2+\dfrac{2sq(p+1)}{1-pq}>-1$, the integral on the left-hand side diverges if $\alpha$ is sufficiently close to $\dfrac{2s(p+1)}{1-pq}$, which yields a contradiction.
	 	\item $pq=1$. Then $A^2=1,~\alpha_{2k}=2ks(p+1)\to\infty$. For some large $k$ the integral
	 	$$\int_1^\infty(r+1)^{2s-2}u(r)^q\D r\leq v(1)/c$$
	 	diverges as well, a contradiction.
	 	\item $pq>1$. $\alpha_{2k}=2s(1+p)\sum\limits_{j=0}^{k-1}(pq)^j\to\infty$, and the same argument applies.
	 \end{itemize}
 
\section{Non-existence of Solutions When $(q+2s)p+2s=1$}\label{sec_non_D=0}
	In this section, we prove Proposition~\ref{D=0_no_sol} via Lemma~\ref{lower_bound}. At the end of Section~\ref{sec_non_D<0}, we iterate the lower--bound estimate from Lemma~\ref{lower_bound} to derive
	$$v(x)\geq c(\alpha)x_n^\alpha,~x_n>1,~\forall\alpha\in\big(0,\dfrac{2s(q+1)}{1-pq}\big).$$
	This estimate remains valid when $(q+2s)p+2s=1$. However, since $\alpha<2s-2+\dfrac{2sp(q+1)}{1-pq}=-1$, all the corresponding integrals converge. To arrive at a blow-up, we need to pass through the limit as $k\to\infty$ to obtain the linear growth of $v$. Hence, it is crucial to keep track of the coefficients in the iteration.
	
	Assume that $(u,v)$ is a pair of positive solutions of \eqref{problem}. We estimate the functions $u$ and $v$ away from $\partial\R^n_+$. Let 
	$$\Gamma=\big\{\theta\in\mathbb{S}^{n-1}:\theta_n>\frac{1}{2}\}.$$
	To begin with, we provide a rough lower bound of $u$ and $v$ at infinity with direction in $\Gamma$.
	\begin{lem}\label{initial_lower_bound}
		There is a constant $c_0>0$ such that
		$$u(r\theta)\geq c_0r^{2s-n-1},~v(r\theta)\geq c_0r^{2s-n-1},~\forall r>1,\theta\in\Gamma.$$
	\end{lem}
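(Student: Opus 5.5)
We derive the bound for $u$ from the integral lower bound of Lemma~\ref{lower_bound}, by restricting the integration to a fixed compact set on which $v$ is bounded below; the bound for $v$ follows by the same argument with the roles of $u,v$ (and $p,q$) exchanged. Fix the compact set $K=\overline{B_{1/4}(e_n)}\subset\R^n_+$. Since $(u,v)$ is a positive (nontrivial) solution, Lemma~\ref{lower_bound} gives $\min_K v\geq c_K>0$ and $\min_K u\geq c_K>0$. Then for every $x\in\R^n_+$,
$$u(x)\geq c\,c_K^p\int_K\big(|x-y|^{2s-n}-|x-y^*|^{2s-n}\big)\D y.$$
The whole matter reduces to estimating the kernel $G(x,y):=|x-y|^{2s-n}-|x-y^*|^{2s-n}$ from below for $y\in K$ and $x=r\theta$ with $r>1$, $\theta\in\Gamma$.

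We split into two ranges of $r$. \textbf{Large $r$} ($r\geq R_0$, with $R_0$ a fixed constant such as $10$): here $|x-y|$ and $|x-y^*|$ are both comparable to $r$, so we apply the mean value theorem to $t\mapsto t^{(2s-n)/2}$ between $|x-y|^2$ and $|x-y^*|^2$. The difference of these squares is exactly $|x-y^*|^2-|x-y|^2=4x_ny_n$. Since $\theta_n>1/2$ we have $x_n>r/2$, and $y_n\geq 3/4$ on $K$. Because $n-2s>0$, this gives
$$G(x,y)\geq\frac{n-2s}{2}\,(4x_ny_n)\,\big(\max\{|x-y|,|x-y^*|\}\big)^{2s-n-2}\geq c\,r\cdot r^{2s-n-2}=c\,r^{2s-n-1}.$$
Integrating over $K$, whose volume is fixed, yields $u(r\theta)\geq c_0r^{2s-n-1}$.

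\textbf{Bounded $r$} ($1<r\leq R_0$): the set $\{r\theta: 1\leq r\leq R_0,\ \theta\in\overline\Gamma\}$ is a compact subset of $\R^n_+$, because $x_n\geq 1/2$ on it. By Lemma~\ref{lower_bound}, $u$ and $v$ have a positive lower bound there. Since $r^{2s-n-1}\leq 1$ in this range, the bound $u(r\theta)\geq c_0r^{2s-n-1}$ holds after shrinking $c_0$.

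The only point requiring care is the uniformity of the large-$r$ estimate. The denominators must be bounded above by $Cr$ uniformly in $\theta\in\Gamma$ and $y\in K$; this is immediate, since $|x-y^*|\leq r+2$. The numerator $4x_ny_n$ must be bounded below by $cr$, and this is exactly where the cone restriction $\theta_n>1/2$ is used. With these two facts the estimate holds uniformly, which completes the argument.
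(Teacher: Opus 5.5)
Your proof is correct and follows the paper's argument. You restrict the integral lower bound of Lemma~\ref{lower_bound} to a fixed ball in $\R^n_+$ on which $v^p$ is bounded below (the paper uses $B_{1/3}(0',1/2)$), and bound the kernel by the mean value theorem, $(2n-4s)x_ny_n|x-y^*|^{2s-n-2}\geq Cr^{2s-n-1}$, using $x_n>r/2$ in the cone. The only difference is your split into large and bounded $r$; it is harmless but unnecessary, since $|x-y^*|\leq r+C\leq C'r$ already holds for every $r>1$, so the kernel estimate covers the whole range at once.
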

	\begin{proof}
		Denote $x=r\theta$. By the mean value theorem, there is some $\xi$ between $|x-y|^2$ and $|x-y^*|^2$ such that
		\begin{equation}\label{kernel_at_infinity}
			\begin{aligned}
				\big(|x-y|^2\big)^{s-n/2}-\big(|x-y^*|^2\big)^{s-n/2}=&\big(s-\frac{n}{2}\big)\xi^{s-n/2-1}\big(|x-y|^2-|x-y^*|^2\big)\\
				\geq&(2n-4s)x_ny_n|x-y^*|^{2s-n-2}.
			\end{aligned}
		\end{equation}
		Let $\mathcal{B}=B_{1/3}(0',1/2)$. Then, for $r>1,\theta\in\Gamma,y\in\mathcal{B}$, \eqref{kernel_at_infinity} yields
		$$\big(|x-y|^2\big)^{s-n/2}-\big(|x-y^*|^2\big)^{s-n/2}\geq Cr^{2s-n-1}.$$
		Therefore
		$$u(r\theta)\geq Cr^{2s-n-1}\int_{\mathcal{B}}v(y)^p\D y\geq c_0r^{2s-n-1}.$$
		The lower bound of $v$ follows in the same manner.
	\end{proof}
	\begin{rmk}
		The corresponding initial estimate in Section~\ref{sec_non_D<0} is by the monotonicity, i.e. $u(x_n)\geq c,v(x_n)\geq c$ as $x_n>1$.
	\end{rmk}
	
	Then we start the iteration.
	\renewcommand{\proofname}{\textit{Proof of Proposition~\ref{D=0_no_sol}}}
	\begin{proof}
		Denote $x=r\theta,y=\rho\kappa$, where $\theta,\kappa\in\Gamma,~r\leq\rho$. By \eqref{kernel_at_infinity},
		$$\big(|x-y|^2\big)^{s-n/2}-\big(|x-y^*|^2\big)^{s-n/2}\geq(2n-4s)x_ny_n(r+\rho)^{2s-n-2}\geq C\rho^{2s-n-1}x_n\kappa_n.$$
		Then we derive		
		\begin{equation}\label{u_lower_bound}
			\begin{aligned}
				u(x)\geq&c\int_{\rho>r,\kappa\in\Gamma }\left(|x-y|^{2s-n}-|x-y^*|^{2s-n}\right)v(y)^p\D y\\
				\geq&Cx_n\int_r^\infty\int_\Gamma\rho^{2s-2}\kappa_n\cdot v(y)^p\D\kappa\D\rho.
			\end{aligned}
		\end{equation}
		Here $\kappa_n>\dfrac{1}{2}$ for $\kappa\in\Gamma$. Denote
		$$U(r)=\min_{\theta\in\Gamma}u(r\theta),~V(\rho)=\min_{\kappa\in\Gamma}v(\rho\kappa).$$
		Since $v(y)\geq V(\rho)$, \eqref{u_lower_bound} implies
		\begin{equation}\label{U_estimate}
			U(r)=\min_{\theta\in\Gamma}u(r\theta)\geq C_*r\int_r^\infty\rho^{2s-2}V(\rho)^p\D\rho.
		\end{equation}
		Here we emphasize that $C_*$ is a fixed constant. The same argument yields
		\begin{equation}\label{V_estimate}
			V(r)\geq C_*r\int_r^\infty\rho^{2s-2}U(\rho)^q\D\rho.
		\end{equation}
		Denote $A_0=B_0=c_0$ and $\alpha_0=\beta_0=2s-n-1$. By Lemma~\ref{initial_lower_bound}, we derive
		$$U(r)\geq C_*c_0^pr\int_r^\infty\rho^{2s-2}\rho^{p\beta_0}\D\rho=\frac{C_*B_0^p}{1-2s-p\beta_0}r^{2s+p\alpha_0}\geq A_1r^{\alpha_1};$$
		$$V(r)\geq C_*c_0^qr\int_r^\infty\rho^{2s-2}\rho^{q\alpha_0}\D\rho=\frac{C_*A_0^q}{1-2s-q\alpha_0}r^{2s+q\beta_0}\geq B_1r^{\beta_1},$$
		where $A_1:=C_*B_0^p,~B_1:=C_*A_0^q,~\alpha_1:=2s+p\beta_0,~\beta_1:=2s+q\alpha_0$. Here and in the following, we always assume that the integrals converge, i.e.
		$$p\beta_k<1-2s,~q\alpha_k<1-2s,$$
		where we define inductively
		$$\alpha_k:=2s+p\beta_{k-1},~\beta_k:=2s+q\alpha_{k-1},~\forall k\in\mathbb{N}^*.$$
		Moreover, denoting the coefficients inductively by
		$$A_k:=C_*B_{k-1}^p,~B_k:=C_*A_{k-1}^q.$$
		Then we apply \eqref{U_estimate} and \eqref{V_estimate} iteratively to derive
		\begin{equation}\label{UV_kth_estimate}
			U(r)\geq A_kr^{\alpha_k},~V(r)\geq B_kr^{\beta_k},~\forall k\in\mathbb{N},~r\geq 1.
		\end{equation}
		
		Since $pq=1-2s(p+1)<1$, arguments at the end of Section~\ref{sec_non_D<0} yields
		$$\alpha_k\to\frac{2s(p+1)}{1-pq}=1,~\beta_k\to\frac{2s(q+1)}{1-pq}=2s+q;~A_k\to C_*^{\tfrac{p+1}{1-pq}},~B_k\to C_*^{\tfrac{q+1}{1-pq}}.$$
		Then there exists a constant $c_*>0$ such that
		$$A_k\geq c_*,\,B_k\geq c_*,~\forall k\in\mathbb{N}.$$
		Now we can pass through the limit \eqref{UV_kth_estimate} to derive
		$$U(r)\geq c_*r,~V(r)\geq c_*r^{2s+q},~r\geq 1.$$
		Substituting in \eqref{U_estimate}, we derive
		$$U(r)\geq C_*c_*^pr\int_r^\infty\rho^{2s-2+(2s+q)p}\D\rho=\infty$$
		since $2s-2+(2s+q)p=-1$. This is a contradiction.
	\end{proof} 
	\renewcommand{\proofname}{\textit{Proof}}
	\begin{rmk}
		The one-dimensional reduction in Section~\ref{sec_non_D<0} is necessary. The behavior of $\alpha_k,\beta_k$ as $k\to\infty$ is vital in our proofs. If $pq<1$, the inductive relation of $(\alpha_k,\beta_k)$ is contractive. Then $(\alpha_k,\beta_k)$ converges to the scaling exponents $\big(\dfrac{2s(p+1)}{1-pq},\dfrac{2s(q+1)}{1-pq}\big)$ for any initial data $\alpha_0,\beta_0$. Here the rough bound $\alpha_0=\beta_0=2s-n-1$ suffices. However, if $pq\geq1$, the dynamical behavior of $(\alpha_k,\beta_k)$ depends heavily on the initial data. This is why we derive the constant lower bound of $u$ and $v$, i.e. $\alpha_0=\beta_0=0$, by the method of moving planes.
	\end{rmk}
\section{Existence of Solutions When $(q+2s)p+2s<1$}\label{sec_constrict_q<p}
	
	In this section, we prove Proposition~\ref{D<0_exist_sol}. It suffices to construct a solution for $n=1$. We first build solutions of linear growth at infinity as in \cite{ZhuoLi2023}. Then we carefully compute the fractional Laplacian of the homogeneous functions and find homogeneous solutions. At the end of this section, we show that the homogeneous profile is no larger than any other solution by an iteration scheme.	
	
	Inspired by the supersolution method in \cite{Zhuo2022,ZhuoLi2023}, we construct a bounded and nondecreasing sequence of lower solutions, which converges to a nontrivial solution.
	\begin{prop}
		There exists a solution of \eqref{1D_problem} such that $u(t)\geq t,\,\forall t>0$.
	\end{prop}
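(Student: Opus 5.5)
The plan is a monotone iteration in the one-dimensional anti-symmetric setting, using the anti-symmetric Green representation on $\R_+$ that appears in Lemma~\ref{lower_bound}:
$$\mathcal G f(t):=c_s'\int_0^\infty\big(|t-r|^{2s-1}-|t+r|^{2s-1}\big)f(r)\D r ,\qquad t>0,$$
extended oddly to $t<0$. Whenever the integral converges, this operator inverts $(-\Delta)^s$ on odd functions on $\R_+$.

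I start the iteration from $u_0(t)=v_0(t)=t$ for $t>0$, extended oddly. Then I define
$$u_{k+1}=t+\mathcal G(v_k^p),\qquad v_{k+1}=t+\mathcal G(u_k^q).$$
The linear function $t$ is anti-symmetric, belongs to $L_{2s+1}(\R)$, and satisfies $(-\Delta)^s t=0$ in the sense of \eqref{def_in_L2s+1}. This holds because the odd linear function is $s$-harmonic thanks to the kernel cancellation, and I check it by the same symmetric splitting used in Lemma~\ref{tilde_equation}. So each iterate solves $(-\Delta)^su_{k+1}=v_k^p$ and $(-\Delta)^sv_{k+1}=u_k^q$. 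Since $\mathcal G$ is positivity preserving and $u_1\ge u_0$, $v_1\ge v_0$, induction shows that $u_k$ and $v_k$ are nondecreasing in $k$. It remains to bound them from above by a supersolution. Once that is done, the pointwise limits $u,v$ solve the system by monotone convergence in the Green representation, and they satisfy $u\ge t$, $v\ge t$. The regularity discussion of Section~\ref{sec_notation_lem} then upgrades them to solutions of \eqref{1D_problem}.

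The main step is to construct a supersolution pair $(\bar u,\bar v)$ with $\bar u\ge t+\mathcal G(\bar v^p)$, $\bar v\ge t+\mathcal G(\bar u^q)$, $\bar u\ge t$, $\bar v\ge t$, and linear growth at infinity so that it lies in $L_{2s+1}$. I will try $\bar u=Mt+Kt^{a}$ and $\bar v=Mt+Kt^{b}$ near the origin and at infinity, with suitably chosen exponents. A cleaner choice is $\bar u=M\max\{t,t^{\gamma}\}$-type functions.

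The homogeneity observation is what makes this tractable. For $f(r)=r^{\mu}$ with $-2<\mu<-2s$, scaling gives $\mathcal G(r^\mu)(t)=C_\mu t^{\mu+2s}$, and $C_\mu$ is finite when the integral converges both at $r\to0$ (where $\mu>-2$) and at $r\to\infty$ (where the kernel is $\sim tr^{2s-2}$, so we need $\mu+2s-2<-1$, i.e.\ $\mu<1-2s$). For the linear part, $\bar v^p\sim t^p$ at infinity, and $p\ge 1-2s$ makes $\mathcal G(t^p)$ diverge. So plugging in linear growth fails, and I must instead use the fact that the source is bounded by a power that decays fast enough.

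This is where the condition $(q+2s)p+2s<1$ enters. I plan to truncate, applying the iteration to sources that are compactly supported, or to choose the supersolution with sublinear growth correction. The precise route is the following. I take $\bar u=t+A t^{\alpha}$ and $\bar v=t+Bt^{\beta}$ for $t\ge1$, with $\alpha=2s+p\beta$ and $\beta=2s+q\alpha$ chosen in the convergent regime. However, the $t$ term forces $v^p\gtrsim t^p$, which makes $\mathcal G$ diverge.

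Therefore I instead seek solutions whose growth comes entirely from the Green potential, dropping the added $t$. I set $\bar u=Mt^{\alpha}$ and $\bar v=Mt^{\beta}$ near infinity, with $\alpha=\frac{2s(1+p)}{1-pq}$ and $\beta=\frac{2s(1+q)}{1-pq}$, which are the scaling exponents. The condition $(q+2s)p+2s<1$ gives $p\beta+2s-2<-1$, so $\mathcal G(\bar v^p)$ converges.

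If the statement's requirement $u\ge t$ is to be met, I expect to need $\alpha\ge1$, which follows since $2s(1+p)\ge1-pq$ holds when $p\ge1-2s$. I then glue this to a linear profile near $0$ and pick $M$ large, using $pq<1$, so that $M\ge C M^{pq}$-type inequalities hold. Verifying the glued supersolution inequality uniformly, both near $t=0$ and in the transition region, is the step I expect to be the main obstacle.
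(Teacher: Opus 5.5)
There is a genuine gap: neither of the two schemes you describe can produce a solution with $u\ge t$. In your first scheme you add the $s$-harmonic function $t$ to both components. Then $v_k\ge t$, so $\mathcal G(v_k^p)\ge\mathcal G(t^p)=+\infty$ because $p\ge1-2s$. The iteration breaks down already at $u_1$, as you note yourself. Truncating the sources does not help, since the limiting source is still $\ge t^p$.

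In your fallback you drop $t$ altogether, take $\bar u=Mt^{\alpha}$, $\bar v=Mt^{\beta}$ with the scaling exponents, and assert $\alpha\ge1$. This is backwards. Since $1-pq>0$, the inequality $\alpha=\frac{2s(1+p)}{1-pq}\ge1$ is equivalent to $(q+2s)p+2s\ge1$. So under the standing hypothesis $(q+2s)p+2s<1$ you have $\alpha<1$. Indeed $2s<\alpha,\beta<1$, which is exactly what gives $c_{s,\alpha},c_{s,\beta}>0$ in Lemma~\ref{lplc_s_homogns_fn}. Any solution lying below such a supersolution is $O(t^{\alpha})=o(t)$ and cannot satisfy $u\ge t$. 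This is not an accident of the ansatz: the Green potential of a power source $r^{\gamma}$ with $\gamma<1-2s$ is $C_\gamma t^{\gamma+2s}$, which is sublinear. So the linear growth must come from the harmonic term. Moreover, a pure-potential iteration started from $(0,0)$ just returns the trivial solution.

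The missing idea is to put the harmonic term in the $u$-equation only, and iterate
\[
F_1(u,v)=t+\mathcal G(v^p),\qquad F_2(u,v)=\mathcal G(u^q)
\]
starting from $(0,0)$. This works for the following reasons.
\begin{itemize}
\item The hypotheses force $q<1-2s$, since $(q+2s)(1-2s)+2s\le(q+2s)p+2s<1$.
\item Then $v$ is generated by $u^q\sim t^q$, so $v\sim t^{q+2s}$ is sublinear.
\item Consequently $v^p\sim t^{(q+2s)p}$ is integrable against the kernel at infinity precisely because $(q+2s)p+2s<1$. This is where the hypothesis enters.
\end{itemize}
The paper's supersolution is $\bar u=K(t+t^{\alpha})$, $\bar v=M(t^{q+2s}+t^{\beta})$. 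Use $\beta<q+2s$, $\alpha=p\beta+2s<(q+2s)p+2s<1$, and the homogeneity $\mathcal G(r^\gamma)=C_\gamma t^{\gamma+2s}$. One gets $F_1(\bar u,\bar v)\le CM^p(t+t^{\alpha})$ and $F_2(\bar u,\bar v)\le CK^q(t^{q+2s}+t^{\beta})$. Taking $K=CM^p$ and $M$ large closes both inequalities, which is possible because $pq<1$. No gluing near $0$ is needed, since each two-term sum already has the right behaviour at $0$ and at $\infty$. The iterates increase monotonically and stay below $(\bar u,\bar v)$. The limit satisfies $\hat u=t+\mathcal G(\hat v^p)\ge t$.
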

	\begin{proof}
		The construction consists of two steps. First, we construct an upper bound function $\big(\bar u(t),\bar v(t)\big)$, which is a supersolution of the related integral equation. Then, we iterate the Green representation to obtain a nondecreasing sequence of lower solutions which is bounded above by $\big(\bar u(t),\bar v(t)\big)$. The monotone convergence theorem then yields a positive solution of \eqref{1D_problem}, and hence \eqref{problem}, for $(q+2s)p+2s<1$.	
		
		\textbf{Step 1.} Define
		\begin{equation}
			\bar u(t)=K\cdot\big(t+t^\tfrac{2s(1+p)}{1-pq}\big),~\bar v(t)=M\cdot\big(t^{q+2s}+t^\tfrac{2s(1+q)}{1-pq}\big),
		\end{equation}
		for some large $K,M$ to be determined, such that $F(\bar u,\bar v)\leq(\bar u,\bar v)$, where
		\begin{equation}
			\begin{aligned}
				F_1(u,v)(t)=&t+c_s'\int_0^\infty\big(|t-r|^{2s-1}-|t+r|^{2s-1}\big)v(r)^p\D r;\\
				F_2(u,v)(t)=&c_s'\int_0^\infty\big(|t-r|^{2s-1}-|t+r|^{2s-1}\big)u(r)^q\D r,
			\end{aligned}
		\end{equation}
		where $c_s'$ is as in the proof of Lemma~\ref{lower_bound}. For any functions $u_1,u_2,v_1,v_2$ defined on $\R_+$, we write $(u_1,v_1)\leq(u_2,v_2)$ to mean that $u_1(t)\leq u_2(t)$ and $v_1(t)\leq v_2(t)$ for any $t>0$. Before further computation, we would like to point out
		$$\frac{2s(1+q)}{1-pq}<q+2s,~\frac{2s(1+p)}{1-pq}=p\cdot\frac{2s(1+q)}{1-pq}+2s<(q+2s)p+2s<1.$$
		
		By the inequality $(a+b)^p\leq C(p)\cdot(a^p+b^p)$ for $a,b>0,\,0<p<1$, the change of variables $r=tx$ yields:
		\begin{equation}
			\begin{aligned}
				&M^{-p}\int_0^\infty\big(|t-r|^{2s-1}-|t+r|^{2s-1}\big)\bar v(r)^p\D r\\
				\leq&C(p)\int_0^\infty\big(|t-r|^{2s-1}-|t+r|^{2s-1}\big)r^{(q+2s)p}\D r\\
				&+C(p)\int_0^\infty\big(|t-r|^{2s-1}-|t+r|^{2s-1}\big)r^\tfrac{2sp(1+q)}{1-pq}\D r\\
				=&C(p)\bar c_{s,(q+2s)p}t^{(q+2s)p+2s}+C(p)\bar c_{s,2sp(1+q)/(1-pq)}t^{\tfrac{2s(1+p)}{1-pq}},
			\end{aligned}
		\end{equation}
		where
		$$\bar c_{s,\gamma}:=\int_0^\infty\big(|1-x|^{2s-1}-|1+x|^{2s-1}\big)x^\gamma\D x,\,0<\gamma<1-2s.$$
		Since
		$$t^{(q+2s)p+2s}\leq t+t^{\tfrac{2s(1+p)}{1-pq}},$$
		by taking $M$ sufficiently large, we derive
		\begin{equation}
			F_1(\bar u,\bar v)(t)\leq t+CM^p\cdot\big(t+t^{\tfrac{2s(1+p)}{1-pq}}\big)\leq CM^p\cdot\big(t+t^{\tfrac{2s(1+p)}{1-pq}}\big)=\frac{CM^p}{K}\bar u(t).
		\end{equation}
		Similarly,
		\begin{equation}
			F_2(\bar u,\bar v)(t)\leq CK^q\cdot\big(t^{q+2s}+t^\tfrac{2s(1+q)}{1-pq}\big)=\frac{CK^q}{M}\bar v(t).
		\end{equation}
		Since $pq<1$, we can choose $K=CM^p$ and $M$ such that $C^{1+q}M^{pq}\leq M$. Then $F(\bar u,\bar v)\leq(\bar u,\bar v)$.
		
		\textbf{Step 2.} We define a sequence $\{(u_k,v_k)\}$ by
		$$(u_0,v_0)=0;~~(u_k,v_k)=F(u_{k-1},v_{k-1}).$$
		Then the sequence is nondecreasing in $k$ and bounded above by $(\bar u,\bar v)$, and therefore converges to a positive pair $(\hat u,\hat v)\leq(\bar u,\bar v)\in L_{2s+1}\times L_{2s+1}$ pointwise. By the monotone convergence theorem,
		\begin{equation}
			\left\{\begin{aligned}
				\hat u(t)=&t+c_s'\int_0^\infty\big(|t-r|^{2s-1}-|t+r|^{2s-1}\big)\hat v(r)^p\D r;\\
				\hat v(t)=&c_s'\int_0^\infty\big(|t-r|^{2s-1}-|t+r|^{2s-1}\big)\hat u(r)^q\D r.
			\end{aligned}\right.
		\end{equation}
		Thus $(\hat u,\hat v)\in L_{2s+1}\times L_{2s+1}$ satisfies \eqref{1D_problem}.
	\end{proof}	

	In order to find homogeneous solutions, we compute the fractional Laplacian of the homogeneous functions. Roughly speaking, the fractional operator decreases the degree of a homogeneous function by $2s$, while the sign of the resulting term plays a crucial role.
	\begin{lem}\label{lplc_s_homogns_fn}
		Let $2s<1$ and define $u_\gamma(t)=|t|^{\gamma-1}t$ with $2s<\gamma<2s+1$. Then $u_\gamma\in L_{2s+1}(\R)\cap C(\R)$ satisfies $(-\Delta)^su_\gamma=c_{s,\gamma}u_{\gamma-2s}$ in $\R_+$, where
		$$c_{s,\gamma}\begin{cases}
		>0,~2s<\gamma<1;\\
		=0,~\gamma=1;\\
		<0,~1<\gamma<1+2s.
		\end{cases}$$
	\end{lem}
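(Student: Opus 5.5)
We prove Lemma~\ref{lplc_s_homogns_fn} by scaling, which reduces the whole statement to a one-variable integral in $\gamma$; the sign pattern then follows from an exact zero at $\gamma=1$ together with strict monotonicity in $\gamma$.

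\emph{Preliminaries and scaling.} Membership in $L_{2s+1}(\R)\cap C(\R)$ is immediate. Continuity at $0$ holds because $\gamma>0$. Integrability holds because
$$\int\frac{|t|^\gamma}{1+|t|^{2+2s}}\D t<\infty\iff\gamma<1+2s.$$
Moreover, $u_\gamma$ is smooth on $\R_+$, so the one-dimensional version of \eqref{def_in_L2s+1} applies at every $t>0$. In that formula we substitute $r=tx$. Since $u_\gamma(tx)=t^\gamma x^\gamma$ for $x>0$, we get $(-\Delta)^su_\gamma(t)=c_{s,\gamma}t^{\gamma-2s}$ with
$$c_{s,\gamma}=c_{1,s}\Big[\int_0^\infty K(x)\,(1-x^\gamma)\D x+\frac1s\Big],\qquad K(x):=|1-x|^{-1-2s}-(1+x)^{-1-2s}>0.$$
We check that this integral converges absolutely:
\begin{itemize}
\item near $x=1$, $|1-x^\gamma|\lesssim|1-x|$, so the integrand is $O(|1-x|^{-2s})$, which is integrable because $2s<1$;
\item near $0$, the integrand is bounded;
\item at infinity, $K(x)=O(x^{-2-2s})$, so $K(x)x^\gamma$ is integrable exactly because $\gamma<1+2s$.
\end{itemize}
The same bounds, taken locally uniformly in $\gamma$, also control $K(x)x^\gamma|\log x|$ on compact subintervals of $(2s,1+2s)$. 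By dominated convergence, $\gamma\mapsto c_{s,\gamma}$ is therefore $C^1$ there.

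\emph{Zero at $\gamma=1$.} Here $u_1(t)=t$. As in the proof of Lemma~\ref{tilde_equation}, the anti-symmetric definition equals
$$c_{1,s}\lim_{R\to\infty}\int_{-R}^R\frac{t-r}{|t-r|^{1+2s}}\D r.$$
Shifting $r\mapsto r+t$ turns this into the integral of an odd function over $[-R-t,R-t]$, which is zero, plus two boundary pieces. These pieces have length $O(t)$ and integrand $O(R^{-2s})$, so they vanish as $R\to\infty$. Hence $c_{s,1}=0$.

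\emph{Strict monotonicity in $\gamma$.} Differentiating gives
$$\partial_\gamma c_{s,\gamma}=-c_{1,s}\int_0^\infty K(x)\,x^\gamma\log x\D x.$$
This sign is the main point, and we handle it with the inversion $x=1/y$. The key identity is $K(1/y)=y^{1+2s}K(y)$. Using it, the part of the integral over $(1,\infty)$ becomes
$$-\int_0^1K(y)\,y^{2s-1-\gamma}\log y\D y.$$
Adding the part over $(0,1)$, we obtain
$$\int_0^\infty K(x)x^\gamma\log x\D x=\int_0^1K(y)\,\log y\,\bigl(y^\gamma-y^{2s-1-\gamma}\bigr)\D y.$$
For $0<y<1$ we have $\log y<0$. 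Since $\gamma>2s>s-\tfrac12$, we also have $\gamma>2s-1-\gamma$, so $y^\gamma<y^{2s-1-\gamma}$. The integrand is therefore positive, and so $\partial_\gamma c_{s,\gamma}<0$.

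Thus $c_{s,\gamma}$ is strictly decreasing on $(2s,1+2s)$ and vanishes at $\gamma=1$. This gives $c_{s,\gamma}>0$ for $2s<\gamma<1$ and $c_{s,\gamma}<0$ for $1<\gamma<1+2s$.

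The only delicate points are justifying differentiation under the integral, which needs a locally uniform integrable majorant of $K(x)x^\gamma|\log x|$ near $x=1$ and at infinity, and the inversion identity. Both are routine once written out.
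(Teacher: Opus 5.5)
Your proof is correct and follows the paper's argument. Both use the scaling $r=tx$, the explicit vanishing at $\gamma=1$ via the symmetric truncation $\int_{-R}^{R}$, and monotonicity in $\gamma$ obtained by folding $(1,\infty)$ onto $(0,1)$ with $x\mapsto 1/x$. The only difference is the order of the last step: the paper folds first and notes that $1+x^{2s-1}-x^\gamma-x^{2s-1-\gamma}$ is pointwise decreasing in $\gamma$ (its $\gamma$-derivative is exactly your $\log x\,(x^{2s-1-\gamma}-x^\gamma)$), so it never has to justify differentiating under the integral sign.
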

	\begin{proof}
		We first consider the case $\gamma=1$. By definition \eqref{def_in_L2s+1},
		\begin{align*}
		(-\Delta)^su_1(t)=&c_s\int_0^\infty\left(\frac{1}{|t-r|^{1+2s}}-\frac{1}{|t+r|^{1+2s}}\right)(t-r)\D r+2c_st\int_0^\infty\frac{\D r}{|t+r|^{1+2s}}\\
		=&c_s\lim_{R\to\infty}\left(\int_0^R\left(\frac{1}{|t-r|^{1+2s}}-\frac{1}{|t+r|^{1+2s}}\right)(t-r)\D r+2t\int_0^R\frac{\D r}{|t+r|^{1+2s}}\right)\\
		=&c_s\lim_{R\to\infty}\left(\int_0^R\frac{(t-r)\D r}{|t-r|^{1+2s}}-\int_{-R}^0\frac{(t+r)\D r}{|t-r|^{1+2s}}+\int_{-R}^0\frac{2t\D r}{|t-r|^{1+2s}}\right)\\
		=&c_s\lim_{R\to\infty}\int_{-R}^R\frac{(t-r)\D r}{|t-r|^{1+2s}}=c_s\lim_{R\to\infty}\int_{R-t}^{t+R}\frac{\D r}{r^{2s}}=0.
		\end{align*}
		For general $\gamma$, we make the change of variables $r=tx$ in the following:
		\begin{align*}
		(-\Delta)^su_\gamma(t)=&c_{1,s}\int_0^\infty\left(\frac{1}{|t-r|^{1+2s}}-\frac{1}{|t+r|^{1+2s}}\right)(t^\gamma-r^\gamma)\D r+2c_{1,s}t^\gamma\int_0^\infty\frac{\D r}{|t+r|^{1+2s}}\\
		=&c_{1,s}t^{\gamma-2s}\int_0^\infty\left(\frac{1}{|1-x|^{1+2s}}-\frac{1}{|1+x|^{1+2s}}\right)(1-x^\gamma)\D x+2c_{1,s}t^{\gamma-2s}\int_0^\infty\frac{\D x}{(1+x)^{1+2s}}\\
		=&c_{1,s}\big(F(s,\gamma)+s^{-1}\big)t^{\gamma-2s},
		\end{align*}
		where
		\begin{align*}
		F(s,\gamma)=&\int_0^\infty\left(\frac{1}{|1-x|^{1+2s}}-\frac{1}{|1+x|^{1+2s}}\right)(1-x^\gamma)\D x\\
		=&\int_0^1\left(\frac{1}{(1-x)^{1+2s}}-\frac{1}{(1+x)^{1+2s}}\right)(1-x^\gamma)\D x\\
		&+\int_0^1\left(\frac{x^{1+2s}}{(1-x)^{1+2s}}-\frac{x^{1+2s}}{(1+x)^{1+2s}}\right)(1-x^{-\gamma})\frac{\D x}{x^2}\\
		=&\int_0^1\left(\frac{1}{(1-x)^{1+2s}}-\frac{1}{(1+x)^{1+2s}}\right)(1+x^{2s-1}-x^\gamma-x^{2s-\gamma-1})\D x.
		\end{align*}
		$F(s,\gamma)$ is decreasing in $\gamma$ because $1+x^{2s-1}-x^\gamma-x^{2s-\gamma-1}$ is decreasing in $\gamma$ for $x\in(0,1),\,\gamma\in(2s,2s+1)$. Since $F(s,1)+s^{-1}=0$, we obtain $F(s,\gamma)+s^{-1}>0$ for $\gamma\in(2s,1)$ and $F(s,\gamma)+s^{-1}<0$ for $\gamma\in(1,1+2s)$.
	\end{proof}
	
	Recall $p>q>0$ and $(q+2s)p+2s<1$. Let $\alpha=2s\cdot\dfrac{p+1}{1-pq},\,\beta=2s\cdot\dfrac{q+1}{1-pq}$, which satisfy
	$$\begin{cases}
	\alpha-2s=p\beta;\\
	\beta-2s=q\alpha.
	\end{cases}$$
	These agree with the scaling exponents of the Lane--Emden system. Since $\alpha,\beta\in(2s,1)$ and $0<pq<1$, Lemma~\ref{lplc_s_homogns_fn} allows us to choose $A,B>0$ such that 
	$$Ac_{s,\alpha}=B^p,~Bc_{s,\beta}=A^q,$$
	so that $A|t|^{\alpha-1}t,~B|t|^{\beta-1}t$ is a nontrivial pair of solutions of \eqref{1D_problem}.
	\begin{rmk}
		The scalar equation $(-\Delta)^sw=w^p$ also admits a homogeneous solution in the form $w(t)=C|t|^{\tfrac{2s}{1-p}-1}t,\,2s+p<1$.
	\end{rmk}
	
	The nonhomogeneous solution constructed above is asymptotically linear at infinity in the first component, growing more rapidly than the homogeneous profile. Indeed, we have the precise comparison:
	\begin{prop}\label{homogeneous_min}
		Let $u,v$ be any nontrivial solution to \eqref{problem}. Then $(u,v)\geq(u_*,v_*)$, i.e.
		$$u(t)\geq At^\alpha,~v(t)\geq Bt^\beta,~\forall t>0.$$
	\end{prop}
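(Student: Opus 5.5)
The plan is to run the lower-bound iteration of Section~\ref{sec_non_D=0}, but on the whole half-line $\R_+$ and with the full kernel. The homogeneous identity behind Lemma~\ref{lplc_s_homogns_fn} should make every step explicit. By the reduction at the start of Section~\ref{sec_constrict_q<p}, I work with $n=1$ and set $K(t,r)=|t-r|^{2s-1}-|t+r|^{2s-1}$. Lemma~\ref{lower_bound} gives
$$u(t)\ge c_s'\int_0^\infty K(t,r)v(r)^p\D r,\qquad v(t)\ge c_s'\int_0^\infty K(t,r)u(r)^q\D r.$$
The substitution $r=tx$ gives $\int_0^\infty K(t,r)r^\gamma\D r=\bar c_{s,\gamma}t^{\gamma+2s}$ for $-1<\gamma<1-2s$.

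\textbf{Step 1: identify the constants.} I first show $c_s'\bar c_{s,p\beta}=c_{s,\alpha}^{-1}$, and likewise $c_s'\bar c_{s,q\alpha}=c_{s,\beta}^{-1}$. The function $w=u_\alpha-c_{s,\alpha}c_s'\int K(\cdot,r)r^{\alpha-2s}\D r$ is anti-symmetric, homogeneous of degree $\alpha<1$, and $s$-harmonic on $\R_+$ by Lemma~\ref{s_harmonic_smooth}. Hence $w=C|t|^{\alpha-1}t$ with $c_{s,\alpha}C=0$, and $c_{s,\alpha}>0$ because $\alpha\in(2s,1)$, so $w\equiv0$. With this, the homogeneous pair $(At^\alpha,Bt^\beta)$ is exactly a fixed point of the integral map
$$\mathcal T(f,g)=\Big(c_s'\int_0^\infty K(\cdot,r)g^p\D r,\;c_s'\int_0^\infty K(\cdot,r)f^q\D r\Big).$$

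\textbf{Step 2: iteration on pure powers.} Suppose $u\ge a_kt^{\alpha_k}$ and $v\ge b_kt^{\beta_k}$ for all $t>0$, with $p\beta_k,q\alpha_k\in(-1,1-2s)$. Then Step 1 yields the same type of bound with
$$\alpha_{k+1}=2s+p\beta_k,\quad \beta_{k+1}=2s+q\alpha_k,\quad a_{k+1}=c_s'\bar c_{s,p\beta_k}b_k^p,\quad b_{k+1}=c_s'\bar c_{s,q\alpha_k}a_k^q.$$
Since $pq<1$, the exponent map is a contraction with fixed point $(\alpha,\beta)$. In the variables $(\log a_k,\log b_k)$ the coefficient map is also asymptotically a contraction, because $\bar c_{s,\gamma}$ depends continuously on $\gamma$. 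Its limiting fixed point is the solution of $Ac_{s,\alpha}=B^p$, $Bc_{s,\beta}=A^q$ from Step 1. Thus $a_kt^{\alpha_k}\to At^\alpha$ and $b_kt^{\beta_k}\to Bt^\beta$ for each fixed $t>0$, and the proposition follows, provided one global power bound is available to start the iteration.

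\textbf{Step 3: the starting bound (main obstacle).} The available information is only piecewise. Lemma~\ref{lower_bound} gives $u,v\ge ct$ for $t\le1$. The moving-plane argument of Proposition~\ref{increasing} uses only $p<1$ and local linear lower bounds, so I expect it to carry over and give monotonicity, hence $u,v\ge c$ for $t\ge1$. Together these give $u,v\ge c\min\{t,1\}$, which is not a single power.

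My first attempt is to carry two exponents through the iteration. I track bounds of the form $u\ge a_k\min\{t^{\alpha_k^0},t^{\alpha_k^\infty}\}$, and similarly for $v$. This requires a lemma: for $\gamma_0\ge\gamma_\infty$ in the admissible range,
$$\int_0^\infty K(t,r)\min\{r^{\gamma_0},r^{\gamma_\infty}\}\D r\ge c(\gamma_0,\gamma_\infty)\min\{t^{\gamma_0+2s},t^{\gamma_\infty+2s}\},$$
with $c(\gamma_0,\gamma_\infty)\to\bar c_{s,\gamma}$ as $\gamma_0,\gamma_\infty\to\gamma$. Both exponent sequences follow the contractive recursion of Step 2, so $\alpha_k^0\downarrow\alpha$ and $\alpha_k^\infty\uparrow\alpha$, and the two-power bound merges into $At^\alpha$ in the limit.

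The delicate point is that the constant must converge to $\bar c_{s,\gamma}$. A crude constant is not enough, because the log-contraction of the coefficients then only gives some positive limit, not $A$. To get this I plan to split the integral at $r=1$ and compare with the pure-power identity; the error should be controlled by $|\gamma_0-\gamma_\infty|\cdot|\log t|$-type terms, which vanish as the exponents merge.

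If this convergence is hard to prove directly, the fallback is scaling. The rescaled pair $(\lambda^{-\alpha}u(\lambda t),\lambda^{-\beta}v(\lambda t))$ is again a solution. Applying the crude-constant version at $t=1$ to every rescaling, and then using the contraction once more, should upgrade the constant to $A$.
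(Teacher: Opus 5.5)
Your Steps 1 and 2 are sound, and Step 2 is essentially the paper's closing argument. Once $(u,v)\ge(\theta u_*,\theta v_*)$, the paper feeds this back through the Green representation to get $\theta^{p},\theta^{q},\theta^{pq},\dots$ and lets $\theta^{(pq)^k}\to1$. That is your log-contraction of $(a_k,b_k)$ with the exponents already at $(\alpha,\beta)$.

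Step 1 actually justifies the identity $u_*=c_s'\int_0^\infty K(\cdot,r)v_*(r)^p\D r$, which the paper uses without comment. Homogeneity of degree $\alpha\neq1$ is exactly what excludes the anti-symmetric $s$-harmonic function $t$. One correction: what makes $w$ $s$-harmonic is the Green property of $c_s'K$ for the density $r^{\alpha-2s}$, not Lemma~\ref{s_harmonic_smooth}, which is only a regularity statement.

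\textbf{First gap: the starting bound in Step 3.} You want $u,v\ge c$ on $[1,\infty)$ from monotonicity, but the proof of Proposition~\ref{increasing} does not use only $p<1$. In Claim~\ref{d>l} with $q<1$ it reaches $(\lambda-\tilde t)^{-2s}\le c(\lambda-\tilde t)^{q-1}$, i.e.\ $(\lambda-\tilde t)^{1-q-2s}\le c$. This gives a lower bound on $\lambda-\tilde t$ only when $q>1-2s$. Here $(q+2s)p+2s<1$ with $p\ge 1-2s$ forces $q<1-2s$, so it is an upper bound. The uniform localization $\lambda-\tilde t>l$, needed already to start the planes, is therefore lost, and the argument as written does not apply.

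You do not need monotonicity. For $n=1$, Lemma~\ref{initial_lower_bound} gives $u,v\ge c_0t^{2s-2}$ on $(1,\infty)$. Since $pq<1$, the exponent recursion contracts from any admissible start. So the iteration of Section~\ref{sec_non_D=0}, which keeps only $r\ge t$ in the kernel, yields $u\ge c_*t^\alpha$ and $v\ge c_*t^\beta$ on $[1,\infty)$. On $(0,1)$ the paper keeps only $r\le t$, where $K(t,r)\ge ct^{2s-2}r$, and iterates from $u,v\ge ct$. Each region feeds only on itself, so your two-exponent lemma reduces to this trivial crude form.

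\textbf{Second gap: a misdiagnosis that leaves Step 3 unfinished.} You argue that crude constants are not enough. You then propose either a sharp estimate with constants tending to $\bar c_{s,\gamma}$, which you have not proved, or a loosely sketched scaling fallback. Neither is needed.

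The crude two-region iteration has coefficients bounded below, because the log-coefficient recursion is a contraction with bounded forcing. In the limit it gives $u\ge c_*t^\alpha$ and $v\ge c_*t^\beta$ for \emph{all} $t>0$. This is a single global pure-power bound whose exponents are already $(\alpha,\beta)$, which is precisely the hypothesis of your Step 2. By your Step 1 identity, the coefficient recursion there becomes $a\mapsto b^p/c_{s,\alpha}$, $b\mapsto a^q/c_{s,\beta}$. It converges to $(A,B)$ from any positive starting value.

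So the proof closes as a crude two-region iteration followed by your Step 2, which is exactly the paper's structure.
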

	\begin{proof}
		By the proof of Proposition~\ref{D=0_no_sol} in Section~\ref{sec_non_D=0}, we already obtain the estimate for $t\geq1$
		\begin{equation}\label{uv_est}
			u(t)\geq c_*t^{\tfrac{2s(p+1)}{1-pq}},~v(t)\geq c_*t^{\tfrac{2s(q+1)}{1-pq}}.
		\end{equation}
		By \eqref{kernel_at_infinity}, we derive
		$$u(t)\geq c_s'\int_0^t\big(|t-r|^{2s-1}-|t+r|^{2s-1}\big)v(r)^p\D r\geq c^*t^{2s-2}\int_0^trv(r)^p\D r.$$
		Similarly
		$$v(t)\geq c^*t^{2s-2}\int_0^tru(r)^q\D r.$$
		Lemma~\ref{lower_bound} implies
		$$u(t)\geq ct,~v(t)\geq ct,~0<t<1.$$
		Denoting $\tilde\alpha_0=\tilde\beta_0=0,~\tilde A_0=\tilde B_0=c$, an iteration argument again yields
		$$u(t)\geq\tilde A_kt^{\tilde\alpha_k},~v(t)\geq\tilde B_kt^{\tilde\beta_k},~0<t<1,$$
		where we define inductively
		$$\tilde\alpha_k:=2s+p\tilde\beta_{k-1},~\beta_k:=2s+q\tilde\alpha_{k-1},~\tilde A_k:=c^*\tilde B_{k-1}^p,~\tilde B_k:=c^*\tilde A_{k-1}^q.$$
		Passing to a limit, we find that \eqref{uv_est} holds for all $t>0$. Therefore, there existes $\theta\in(0,1)$ such that $(u,v)\geq(\theta u_*,\theta v_*)$. Then Lemma~\ref{lower_bound} implies
		\begin{align*}
			u(t)\geq&c_s'\int_0^\infty\big(|t-r|^{2s-1}-|t+r|^{2s-1}\big)v(r)^p\D r\\
			\geq&\theta^pc_s'\int_0^\infty\big(|t-r|^{2s-1}-|t+r|^{2s-1}\big)v_*(r)^p\D r\\
			=&\theta^pu_*(t);\\
			v(t)\geq&c_s'\int_0^\infty\big(|t-r|^{2s-1}-|t+r|^{2s-1}\big)u(r)^q\D r\\
			\geq&\theta^{pq}c_s'\int_0^\infty\big(|t-r|^{2s-1}-|t+r|^{2s-1}\big)u_*(r)^q\D r\\
			=&\theta^{pq}v_*(t),~\forall t>0.
		\end{align*}
		Similarly $u(t)\geq \theta^{pq}u_*(t)$. Repeating the estimate, we derive
		$$(u,v)\geq(\theta^{(pq)^k}u_*,\theta^{(pq)^k}v_*),~\forall k\in\mathbb{N}.$$
		Sending $k\to\infty$ yields the desired assertion.
	\end{proof}

	\begin{rmk}
		A full classification for the general regime $q<1-2s<p$ remains open. The precise critical exponent separating existence from non-existence is still unknown. However, based on our result, we conjecture that no solutions exist when
		$$0<q<1-2s<p\leq\frac{n+2s}{n-2s}~~\text{and}~~(q+2s)p+2s>1.$$
	\end{rmk}
{\bf{Acknowledgements.}} 

Li and Wang are partially supported by National Natural Science Foundation of China (NSFC Grant No. W2531006) and the Institute of Modern Analysis-A Frontier Research Center of Shanghai.
\medskip

{\bf{Data availability statement:}} Data will be made available on reasonable request.
\medskip

{\bf{Conflict of interest statement:}} There is no conflict of interest.

{\bf{Declaration on the Use of AI:}} During the preparation of this manuscript, ChatGPT was used to assist with symbolic calculations and the wording of the text. The authors were inspired by ChatGPT when proving Proposition~\ref{D=0_no_sol} and the universal lower bound Proposition~\ref{homogeneous_min}. All remaining results, including the Liouville-type simplification, Propsition~\ref{depend_n} and monotoicity, Proposition~\ref{increasing}, as well as the construction of two classes of solutions, were proved independently by the authors. The authors are responsible for the mathematical content and refrences.
	
\bibliographystyle{plain}
\bibliography{cite}

@book {Bertoin1996,
	AUTHOR = {Bertoin, Jean},
	TITLE = {L\'evy processes},
	SERIES = {Cambridge Tracts in Mathematics},
	VOLUME = {121},
	PUBLISHER = {Cambridge University Press, Cambridge},
	YEAR = {1996},
	PAGES = {x+265},
	ISBN = {0-521-56243-0},
	MRCLASS = {60J30 (60J55)},
	MRNUMBER = {1406564},
	MRREVIEWER = {Patrick\ Fitzsimmons},
}

@article{MetzlerKlafter2000,
	title = {The random walk's guide to anomalous diffusion: a fractional dynamics approach},
	journal = {Physics Reports},
	volume = {339},
	number = {1},
	pages = {1-77},
	year = {2000},
	issn = {0370-1573},
	doi = {https://doi.org/10.1016/S0370-1573(00)00070-3},
	url = {https://www.sciencedirect.com/science/article/pii/S0370157300000703},
	author = {Ralf Metzler and Joseph Klafter},
}

@article{JespersenMetzlerFogedby1999,
	title = {L\'evy flights in external force fields: Langevin and fractional Fokker-Planck equations and their solutions},
	author = {Jespersen, Sune and Metzler, Ralf and Fogedby, Hans C.},
	journal = {Phys. Rev. E},
	volume = {59},
	issue = {3},
	pages = {2736--2745},
	numpages = {0},
	year = {1999},
	month = {Mar},
	publisher = {American Physical Society},
	doi = {10.1103/PhysRevE.59.2736},
	url = {https://link.aps.org/doi/10.1103/PhysRevE.59.2736}
}

@article {ConstantinCordobaWu2001,
	AUTHOR = {Constantin, Peter and Cordoba, Diego and Wu, Jiahong},
	TITLE = {On the critical dissipative quasi-geostrophic equation},
	NOTE = {Dedicated to Professors Ciprian Foias and Roger Temam
	(Bloomington, IN, 2000)},
	JOURNAL = {Indiana Univ. Math. J.},
	FJOURNAL = {Indiana University Mathematics Journal},
	VOLUME = {50},
	YEAR = {2001},
	PAGES = {97--107},
	ISSN = {0022-2518,1943-5258},
	MRCLASS = {35Q35 (76B03 86A05)},
	MRNUMBER = {1855665},
	MRREVIEWER = {Emmanuel\ Grenier},
	DOI = {10.1512/iumj.2008.57.3629},
	URL = {https://doi.org/10.1512/iumj.2008.57.3629},
}

@article {KiselevNazarovVolberg2007,
	AUTHOR = {Kiselev, A. and Nazarov, F. and Volberg, A.},
	TITLE = {Global well-posedness for the critical 2{D} dissipative
	quasi-geostrophic equation},
	JOURNAL = {Invent. Math.},
	FJOURNAL = {Inventiones Mathematicae},
	VOLUME = {167},
	YEAR = {2007},
	NUMBER = {3},
	PAGES = {445--453},
	ISSN = {0020-9910,1432-1297},
	MRCLASS = {35Q35 (35B30 35K50)},
	MRNUMBER = {2276260},
	MRREVIEWER = {Xiaoming\ Wang},
	DOI = {10.1007/s00222-006-0020-3},
	URL = {https://doi.org/10.1007/s00222-006-0020-3},
}

@article {GrahamZworski2003,
	AUTHOR = {Graham, C. Robin and Zworski, Maciej},
	TITLE = {Scattering matrix in conformal geometry},
	JOURNAL = {Invent. Math.},
	FJOURNAL = {Inventiones Mathematicae},
	VOLUME = {152},
	YEAR = {2003},
	NUMBER = {1},
	PAGES = {89--118},
	ISSN = {0020-9910,1432-1297},
	MRCLASS = {58J50},
	MRNUMBER = {1965361},
	MRREVIEWER = {Andrew\ W.\ Hassell},
	DOI = {10.1007/s00222-002-0268-1},
	URL = {https://doi.org/10.1007/s00222-002-0268-1},
}

@article {ChangGonzalez2011,
	AUTHOR = {Chang, Sun-Yung Alice and Gonz\'alez, Mar\'ia del Mar},
	TITLE = {Fractional {L}aplacian in conformal geometry},
	JOURNAL = {Adv. Math.},
	FJOURNAL = {Advances in Mathematics},
	VOLUME = {226},
	YEAR = {2011},
	NUMBER = {2},
	PAGES = {1410--1432},
	ISSN = {0001-8708,1090-2082},
	MRCLASS = {58J60 (35R11)},
	MRNUMBER = {2737789},
	MRREVIEWER = {Luca\ Lorenzi},
	DOI = {10.1016/j.aim.2010.07.016},
	URL = {https://doi.org/10.1016/j.aim.2010.07.016},
}

@article {GonzalezQing2013,
	AUTHOR = {Gonz\'alez, Mar\'ia del Mar and Qing, Jie},
	TITLE = {Fractional conformal {L}aplacians and fractional {Y}amabe
	problems},
	JOURNAL = {Anal. PDE},
	FJOURNAL = {Analysis \& PDE},
	VOLUME = {6},
	YEAR = {2013},
	NUMBER = {7},
	PAGES = {1535--1576},
	ISSN = {2157-5045,1948-206X},
	MRCLASS = {35R11 (35J70 53A30)},
	MRNUMBER = {3148060},
	MRREVIEWER = {Pablo\ Ra\'ul\ Stinga},
	DOI = {10.2140/apde.2013.6.1535},
	URL = {https://doi.org/10.2140/apde.2013.6.1535},
}

@article {Lieb1983,
	AUTHOR = {Lieb, Elliott H.},
	TITLE = {Sharp constants in the {H}ardy-{L}ittlewood-{S}obolev and
	related inequalities},
	JOURNAL = {Ann. of Math. (2)},
	FJOURNAL = {Annals of Mathematics. Second Series},
	VOLUME = {118},
	YEAR = {1983},
	NUMBER = {2},
	PAGES = {349--374},
	ISSN = {0003-486X,1939-8980},
	MRCLASS = {42B25 (42A05)},
	MRNUMBER = {717827},
	MRREVIEWER = {Autorreferat},
	DOI = {10.2307/2007032},
	URL = {https://doi.org/10.2307/2007032},
}

@article {Gidas_Spruck,
	AUTHOR = {Gidas, B. and Spruck, J.},
	TITLE = {Global and local behavior of positive solutions of nonlinear
	elliptic equations},
	JOURNAL = {Comm. Pure Appl. Math.},
	FJOURNAL = {Communications on Pure and Applied Mathematics},
	VOLUME = {34},
	YEAR = {1981},
	NUMBER = {4},
	PAGES = {525--598},
	ISSN = {0010-3640,1097-0312},
	MRCLASS = {35J60 (53C20 58G05 81E10)},
	MRNUMBER = {615628},
	DOI = {10.1002/cpa.3160340406},
	URL = {https://doi.org/10.1002/cpa.3160340406},
}

@article {C_V2010,
	AUTHOR = {Caffarelli, Luis A. and Vasseur, Alexis},
	TITLE = {Drift diffusion equations with fractional diffusion and the
	quasi-geostrophic equation},
	JOURNAL = {Ann. of Math. (2)},
	FJOURNAL = {Annals of Mathematics. Second Series},
	VOLUME = {171},
	YEAR = {2010},
	NUMBER = {3},
	PAGES = {1903--1930},
	ISSN = {0003-486X,1939-8980},
	MRCLASS = {35Q35 (35B45 35B65 35R11 76B03 86A10)},
	MRNUMBER = {2680400},
	MRREVIEWER = {Denis\ Serre},
	DOI = {10.4007/annals.2010.171.1903},
	URL = {https://doi.org/10.4007/annals.2010.171.1903},
}

@article{Li2010,
	author = {Li, Dong and Zhuo, Ran},
	journal = {Proceedings of the American Mathematical Society},
	number = {8},
	pages = {2779--2791},
	publisher = {American Mathematical Society},
	title = {AN INTEGRAL EQUATION ON HALF SPACE},
	urldate = {2026-09-17},
	volume = {138},
	year = {2010}
}

@article {Zhuo2022,
	AUTHOR = {Zhuo, Ran and Li, Congming},
	TITLE = {Classification of anti-symmetric solutions to nonlinear
	fractional {L}aplace equations},
	JOURNAL = {Calc. Var. Partial Differential Equations},
	FJOURNAL = {Calculus of Variations and Partial Differential Equations},
	VOLUME = {61},
	YEAR = {2022},
	NUMBER = {1},
	PAGES = {Paper No. 17, 23},
	ISSN = {0944-2669,1432-0835},
	MRCLASS = {35R11},
	MRNUMBER = {4350191},
	DOI = {10.1007/s00526-021-02128-w},
	URL = {https://doi.org/10.1007/s00526-021-02128-w},
}

@article {ZhuoLi2023,
	AUTHOR = {Li, Congming and Zhuo, Ran},
	TITLE = {Classification of anti-symmetric solutions to the fractional
	{L}ane-{E}mden system},
	JOURNAL = {Sci. China Math.},
	FJOURNAL = {Science China. Mathematics},
	VOLUME = {66},
	YEAR = {2023},
	NUMBER = {4},
	PAGES = {723--744},
	ISSN = {1674-7283,1869-1862},
	MRCLASS = {35R11 (35B50 35B53)},
	MRNUMBER = {4577475},
	DOI = {10.1007/s11425-021-1952-1},
	URL = {https://doi.org/10.1007/s11425-021-1952-1},
}

@article{Chen2017,
	AUTHOR = {Chen, Wenxiong and Li, Congming and Li, Yan},
	TITLE = {A direct method of moving planes for the fractional
	{L}aplacian},
	JOURNAL = {Adv. Math.},
	FJOURNAL = {Advances in Mathematics},
	VOLUME = {308},
	YEAR = {2017},
	PAGES = {404--437},
	ISSN = {0001-8708,1090-2082},
	MRCLASS = {35R11 (35A01 35B09 35B50)},
	MRNUMBER = {3600062},
	MRREVIEWER = {\L ukasz\ P\l ociniczak},
	DOI = {10.1016/j.aim.2016.11.038},
	URL = {https://doi.org/10.1016/j.aim.2016.11.038},
}

@article{Li2021,
	AUTHOR = {Li, Congming and Wu, Leyun},
	TITLE = {Pointwise regularity for fractional equations},
	JOURNAL = {J. Differential Equations},
	FJOURNAL = {Journal of Differential Equations},
	VOLUME = {302},
	YEAR = {2021},
	PAGES = {1--36},
	ISSN = {0022-0396,1090-2732},
	MRCLASS = {35B65 (35R11)},
	MRNUMBER = {4310545},
	MRREVIEWER = {Nataliya\ Volodymyrivna\ Vasylyeva},
	DOI = {10.1016/j.jde.2021.08.027},
	URL = {https://doi.org/10.1016/j.jde.2021.08.027},
}

@article{Caffarelli1989,
	AUTHOR = {Caffarelli, Luis A. and Gidas, Basilis and Spruck, Joel},
	TITLE = {Asymptotic symmetry and local behavior of semilinear elliptic
	equations with critical {S}obolev growth},
	JOURNAL = {Comm. Pure Appl. Math.},
	FJOURNAL = {Communications on Pure and Applied Mathematics},
	VOLUME = {42},
	YEAR = {1989},
	NUMBER = {3},
	PAGES = {271--297},
	ISSN = {0010-3640,1097-0312},
	MRCLASS = {35J60 (35B40 58G30)},
	MRNUMBER = {982351},
	MRREVIEWER = {Robert\ McOwen},
	DOI = {10.1002/cpa.3160420304},
	URL = {https://doi.org/10.1002/cpa.3160420304},
}

@article{Chen2015,
	AUTHOR = {Chen, Wenxiong and Fang, Yanqin and Yang, Ray},
	TITLE = {Liouville theorems involving the fractional {L}aplacian on a
	half space},
	JOURNAL = {Adv. Math.},
	FJOURNAL = {Advances in Mathematics},
	VOLUME = {274},
	YEAR = {2015},
	PAGES = {167--198},
	ISSN = {0001-8708,1090-2082},
	MRCLASS = {35R11 (35A01 35A08 35B53)},
	MRNUMBER = {3318148},
	MRREVIEWER = {Kai\ Diethelm},
	DOI = {10.1016/j.aim.2014.12.013},
	URL = {https://doi.org/10.1016/j.aim.2014.12.013},
}

@article{Gidas_Ni_Nirenberg,
	AUTHOR = {Gidas, B. and Ni, Wei Ming and Nirenberg, L.},
	TITLE = {Symmetry of positive solutions of nonlinear elliptic equations
	in {${\bf R}\sp{n}$}},
	BOOKTITLE = {Mathematical analysis and applications, {P}art {A}},
	SERIES = {Adv. Math. Suppl. Stud.},
	VOLUME = {7a},
	PAGES = {369--402},
	PUBLISHER = {Academic Press, New York-London},
	YEAR = {1981},
	ISBN = {0-12-512801-0},
	MRCLASS = {35J60 (53C05 58G20)},
	MRNUMBER = {634248},
	MRREVIEWER = {D.\ E.\ Edmunds},
}

@article{Quaas2015,
	AUTHOR = {Quaas, Alexander and Xia, Aliang},
	TITLE = {Liouville type theorems for nonlinear elliptic equations and
	systems involving fractional {L}aplacian in the half space},
	JOURNAL = {Calc. Var. Partial Differential Equations},
	FJOURNAL = {Calculus of Variations and Partial Differential Equations},
	VOLUME = {52},
	YEAR = {2015},
	NUMBER = {3-4},
	PAGES = {641--659},
	ISSN = {0944-2669,1432-0835},
	MRCLASS = {35R11 (35B06 35B09 35B53)},
	MRNUMBER = {3311908},
	DOI = {10.1007/s00526-014-0727-8},
	URL = {https://doi.org/10.1007/s00526-014-0727-8},
}

@article{serrin1971,
	AUTHOR = {Serrin, James},
	TITLE = {A symmetry problem in potential theory},
	JOURNAL = {Arch. Rational Mech. Anal.},
	FJOURNAL = {Archive for Rational Mechanics and Analysis},
	VOLUME = {43},
	YEAR = {1971},
	PAGES = {304--318},
	ISSN = {0003-9527},
	MRCLASS = {31B20},
	MRNUMBER = {333220},
	MRREVIEWER = {Gottfried\ Anger},
	DOI = {10.1007/BF00250468},
	URL = {https://doi.org/10.1007/BF00250468},
}

@article{gidas1979,
	AUTHOR = {Gidas, B. and Ni, Wei Ming and Nirenberg, L.},
	TITLE = {Symmetry and related properties via the maximum principle},
	JOURNAL = {Comm. Math. Phys.},
	FJOURNAL = {Communications in Mathematical Physics},
	VOLUME = {68},
	YEAR = {1979},
	NUMBER = {3},
	PAGES = {209--243},
	ISSN = {0010-3616,1432-0916},
	MRCLASS = {35J25 (35B50)},
	MRNUMBER = {544879},
	MRREVIEWER = {\`E.\ M.\ Saak},
	URL = {http://projecteuclid.org/euclid.cmp/1103905359},
}

@article{troy1981,
	AUTHOR = {Troy, William C.},
	TITLE = {Symmetry properties in systems of semilinear elliptic
	equations},
	JOURNAL = {J. Differential Equations},
	FJOURNAL = {Journal of Differential Equations},
	VOLUME = {42},
	YEAR = {1981},
	NUMBER = {3},
	PAGES = {400--413},
	ISSN = {0022-0396,1090-2732},
	MRCLASS = {35J45 (80A30 92A09)},
	MRNUMBER = {639230},
	MRREVIEWER = {Richard\ J.\ Field},
	DOI = {10.1016/0022-0396(81)90113-3},
	URL = {https://doi.org/10.1016/0022-0396(81)90113-3},
}

@article {AoChanGonzalezWei2020,
	AUTHOR = {Ao, Weiwei and Chan, Hardy and Gonz\'alez, Mar\'ia del Mar and
	Wei, Juncheng},
	TITLE = {Bound state solutions for the supercritical fractional
	{S}chr\"odinger equation},
	JOURNAL = {Nonlinear Anal.},
	FJOURNAL = {Nonlinear Analysis. Theory, Methods \& Applications. An
	International Multidisciplinary Journal},
	VOLUME = {193},
	YEAR = {2020},
	PAGES = {111448, 21},
	ISSN = {0362-546X,1873-5215},
	MRCLASS = {35R11 (35J61)},
	MRNUMBER = {4062970},
	DOI = {10.1016/j.na.2019.02.002},
	URL = {https://doi.org/10.1016/j.na.2019.02.002},
}

@article{chen1991,
	AUTHOR = {Chen, Wenxiong and Li, Congming},
	TITLE = {Classification of solutions of some nonlinear elliptic
	equations},
	JOURNAL = {Duke Math. J.},
	FJOURNAL = {Duke Mathematical Journal},
	VOLUME = {63},
	YEAR = {1991},
	NUMBER = {3},
	PAGES = {615--622},
	ISSN = {0012-7094,1547-7398},
	MRCLASS = {35B05 (35J60)},
	MRNUMBER = {1121147},
	DOI = {10.1215/S0012-7094-91-06325-8},
	URL = {https://doi.org/10.1215/S0012-7094-91-06325-8},
}

@article{ChenLiOu,
	AUTHOR = {Chen, Wenxiong and Li, Congming and Ou, Biao},
	TITLE = {Classification of solutions for an integral equation},
	JOURNAL = {Comm. Pure Appl. Math.},
	FJOURNAL = {Communications on Pure and Applied Mathematics},
	VOLUME = {59},
	YEAR = {2006},
	NUMBER = {3},
	PAGES = {330--343},
	ISSN = {0010-3640,1097-0312},
	MRCLASS = {45E05 (35J60)},
	MRNUMBER = {2200258},
	MRREVIEWER = {Dorina\ I.\ Mitrea},
	DOI = {10.1002/cpa.20116},
	URL = {https://doi.org/10.1002/cpa.20116},
}

@article {Figueiredo1994ALT,
	AUTHOR = {de Figueiredo, D. G. and Felmer, P. L.},
	TITLE = {A {L}iouville-type theorem for elliptic systems},
	JOURNAL = {Ann. Scuola Norm. Sup. Pisa Cl. Sci. (4)},
	FJOURNAL = {Annali della Scuola Normale Superiore di Pisa. Classe di
	Scienze. Serie IV},
	VOLUME = {21},
	YEAR = {1994},
	NUMBER = {3},
	PAGES = {387--397},
	ISSN = {0391-173X,2036-2145},
	MRCLASS = {35B05 (35J45 35J60)},
	MRNUMBER = {1310633},
	MRREVIEWER = {Yoshihito\ Tomita},
	URL = {http://www.numdam.org/item?id=ASNSP_1994_4_21_3_387_0},
}

@article {SerrinZou1996,
	AUTHOR = {Serrin, James and Zou, Henghui},
	TITLE = {Non-existence of positive solutions of {L}ane-{E}mden systems},
	JOURNAL = {Differential Integral Equations},
	FJOURNAL = {Differential and Integral Equations. An International Journal
	for Theory \& Applications},
	VOLUME = {9},
	YEAR = {1996},
	NUMBER = {4},
	PAGES = {635--653},
	ISSN = {0893-4983},
	MRCLASS = {35J60 (35B05)},
	MRNUMBER = {1401429},
	MRREVIEWER = {C.\ A.\ Swanson},
}

@article {SerrinZou1998,
	AUTHOR = {Serrin, James and Zou, Henghui},
	TITLE = {Existence of positive solutions of the {L}ane-{E}mden system},
	JOURNAL = {Atti Sem. Mat. Fis. Univ. Modena},
	FJOURNAL = {Atti del Seminario Matematico e Fisico dell'Universit\`a{} di
	Modena},
	VOLUME = {46},
	YEAR = {1998},
	PAGES = {369--380},
	ISSN = {0041-8986},
	MRCLASS = {35J60 (35B05 35J45)},
	MRNUMBER = {1645728},
	MRREVIEWER = {Giovanni\ Leoni},
}

@article {Polacik2007,
	AUTHOR = {Pol\'a\v cik, Peter and Quittner, Pavol and Souplet, Philippe},
	TITLE = {Singularity and decay estimates in superlinear problems via
	{L}iouville-type theorems. {I}. {E}lliptic equations and
	systems},
	JOURNAL = {Duke Math. J.},
	FJOURNAL = {Duke Mathematical Journal},
	VOLUME = {139},
	YEAR = {2007},
	NUMBER = {3},
	PAGES = {555--579},
	ISSN = {0012-7094,1547-7398},
	MRCLASS = {35J60 (35B45 35J45 35J70)},
	MRNUMBER = {2350853},
	MRREVIEWER = {Alkis\ Tersenov},
	DOI = {10.1215/S0012-7094-07-13935-8},
	URL = {https://doi.org/10.1215/S0012-7094-07-13935-8},
}

@article {Souplet2009,
	AUTHOR = {Souplet, Philippe},
	TITLE = {The proof of the {L}ane-{E}mden conjecture in four space
	dimensions},
	JOURNAL = {Adv. Math.},
	FJOURNAL = {Advances in Mathematics},
	VOLUME = {221},
	YEAR = {2009},
	NUMBER = {5},
	PAGES = {1409--1427},
	ISSN = {0001-8708,1090-2082},
	MRCLASS = {35J47 (35A01 35B33 35B45 35J91)},
	MRNUMBER = {2522424},
	MRREVIEWER = {Scott\ N.\ Armstrong},
	DOI = {10.1016/j.aim.2009.02.014},
	URL = {https://doi.org/10.1016/j.aim.2009.02.014},
}

@article {MR3538413,
	AUTHOR = {Quaas, Alexander and Xia, Aliang},
	TITLE = {A {L}iouville type theorem for {L}ane-{E}mden systems
	involving the fractional {L}aplacian},
	JOURNAL = {Nonlinearity},
	FJOURNAL = {Nonlinearity},
	VOLUME = {29},
	YEAR = {2016},
	NUMBER = {8},
	PAGES = {2279--2297},
	ISSN = {0951-7715,1361-6544},
	MRCLASS = {35R11 (35B08 35B33 35B53 35J47 35S05 45K05)},
	MRNUMBER = {3538413},
	DOI = {10.1088/0951-7715/29/8/2279},
	URL = {https://doi.org/10.1088/0951-7715/29/8/2279},
}

@article {LuoZou16,
	AUTHOR = {Luo, Senping and Zou, Wenming},
	TITLE = {Liouville theorems for integral systems related to fractional
	{L}ane-{E}mden systems in {$\Bbb{R}^N_+$}},
	JOURNAL = {Differential Integral Equations},
	FJOURNAL = {Differential and Integral Equations. An International Journal
	for Theory \& Applications},
	VOLUME = {29},
	YEAR = {2016},
	NUMBER = {11-12},
	PAGES = {1107--1138},
	ISSN = {0893-4983},
	MRCLASS = {45G15 (35B53 35R11 65R20)},
	MRNUMBER = {3557314},
	MRREVIEWER = {Mario\ M.\ Coclite},
	URL = {http://projecteuclid.org/euclid.die/1476369332},
}

@article{OpenAI2026,
	AUTHOR = {OpenAI},
	TITLE = {The Subcritical H\'enon--Lane--Emden Conjecture},
	JOURNAL = {preprint},
	YEAR = {2026},
	URL = {https://github.com/openai/math/blob/main/preprints/The-Subcritical-Henon-Lane-Emden-Conjecture-September-24-2026/paper.pdf},
}

@misc{MeiWei2026,
	title={A proof of the fractional Lane--Emden conjecture}, 
	author={Linfeng Mei and Juncheng Wei},
	year={2026},
	eprint={2610.10188},
	archivePrefix={arXiv},
	primaryClass={math.AP},
	url={https://arxiv.org/abs/2610.10188}, 
}
 \end{document}